\definecolor{Maroon}{rgb}{1.0,0.2,0.5}
\def\polhk#1{\setbox0=\hbox{#1}{\ooalign{\hidewidth
    \lower1.5ex\hbox{`}\hidewidth\crcr\unhbox0}}} 
\newtheorem{theorem}{Theorem}[section]
\newtheorem{lemma}[theorem]{Lemma}
\newtheorem{corollary}[theorem]{Corollary}
\newtheorem{proposition}[theorem]{Proposition}
\theoremstyle{definition}
\newtheorem{claim}[theorem]{Claim}
\newtheorem{remark}[theorem]{Remark}
\newtheorem{exttheorem}{Theorem}
\newcommand{\Def}{\coloneqq}
\newcommand{\R}{\mathbb{R}}
\newcommand{\C}{\mathbb{C}}
\newcommand{\N}{\mathbb{N}} 
\newcommand{\Z}{\mathbb{Z}}
\newcommand{\T}{\mathbb{T}}
\newcommand{\LP}{\operatorname{L}}
\newcommand{\Matrices}[1]{ \mathbb{M}\left( {#1} \right) } 
\newcommand{\SAMatrices}[1]{ \mathbb{M}_{SA}\left( {#1} \right) } 
\DeclareDocumentCommand{\Matrices}{ O{r} O{n} }{ \mathbb{M}_{#2} \left( {#1} \right) }
\DeclareDocumentCommand{\SAMatrices}{ O{r} O{n} }{ \mathbb{M}_{#2,SA} \left( {#1} \right) }
\newcommand{\Id}{\operatorname{Id}}
\newcommand{\stdcg}{N_\bbc(0,1)}
\newcommand{\Exp}{\mathbb{E}}
\newcommand{\prob}{\mathbb{P}}
\renewcommand{\Pr}{\prob}
\DeclareDocumentCommand \one { o }
{%
\IfNoValueTF {#1}
{\mathbf{1}  }
{\mathbf{1}\left\{ {#1} \right\} }%
}
\newcommand{\Var}{\operatorname{Var}}
\newcommand{\As}{\ensuremath{\operatorname{a.s.}}}
\DeclareDocumentCommand{\Prto} {o} {
\IfNoValueTF {#1}
 {\overset{\Pr}{\longrightarrow}}
 { \xrightarrow[ #1 \to \infty]{\Pr }}
}
\DeclareDocumentCommand{\Asto} {o} {
\IfNoValueTF {#1}
 {\overset{\operatorname{a.s.}}{\longrightarrow}}
 {
 \xrightarrow[ #1 \to \infty]{\operatorname{a.s.} }
 }
}
\DeclareDocumentCommand{\Mgfto} {o} {
\IfNoValueTF {#1}
{\overset{\operatorname{mgf}}{\longrightarrow}}
{ \xrightarrow[ #1 \to \infty]{\operatorname{mgf} }}
}
\DeclareDocumentCommand{\Wkto} {o} {
\IfNoValueTF {#1}
 {\overset{(d)}{\longrightarrow}}
 { \xrightarrow[ #1 \to \infty]{(d) }}
}
\DeclareDocumentCommand \LPto { O{1} }
{\overset{\operatorname{\LP^{#1}}}{\longrightarrow}}
\newcommand{\bbc}{\mathbb{C}}
\newcommand{\bbn}{\mathbb{N}}
\newcommand{\dd}{\mathrm{d}}
\newcommand{\VMOA}{\ensuremath{\operatorname{VMOA} }}
\newcommand{\BMOA}{\ensuremath{\operatorname{BMOA} }}
\newcommand{\Sledd}{\ensuremath{\operatorname{SL} }}
\newcommand{\Bloch}{\ensuremath{\mathcal{B} }}
\DeclareDocumentCommand{\starnorm} {o} {
\IfNoValueTF {#1}
{ \left\| \cdot \right\|_{*} }
{ \left\| { #1 } \right\|_{*} }
}
\DeclareDocumentCommand{\Blochnorm} {o} {
\IfNoValueTF {#1}
{ \left\| \cdot \right\|_{\mathcal{B}} }
{ \left\| { #1 } \right\|_{\mathcal{B}} }
}
\DeclareDocumentCommand{\RSledd} {O{\cdot} d<> } {
\IfNoValueTF {#2}
{ \left\| { #1 } \right\|_{S(R)} }
{ \left\| { #1 } \right\|_{S(R),{#2}} }
}
\DeclareDocumentCommand{\TSledd} {O{\cdot} d<>} {
\IfNoValueTF {#2}
{ \left\| { #1 } \right\|_{S(T)} }
{ \left\| { #1 } \right\|_{S(T),#2} }
}
\begin{document}

\title[GAFs of BMO]{Gaussian analytic functions of\\
Bounded Mean oscillation}
\author{Alon Nishry}
\address{School of Mathematics, Department of Pure Mathematics, Tel Aviv University }
\email{alonish@tauex.tau.ac.il}

\author{Elliot Paquette}
\address{Department of Mathematics, The Ohio State University}
\email{paquette.30@osu.edu}
\thanks{
  The work of both authors was supported by grants
  from the Israel Science Foundation, from the US-Israel Binational 
  Science Foundation, and from the European Research Council.
}
\date{\today}
\maketitle
\begin{abstract}
We consider random analytic functions given by a Taylor series with independent, centered complex Gaussian coefficients. We give a new sufficient condition for such a function to have bounded mean oscillations. Under a mild regularity assumption this condition is optimal. Using a theorem of Holland and Walsh, we give as a corollary a new bound for the norm of a random Gaussian Hankel matrix. Finally, we construct some \emph{exceptional} Gaussian analytic functions which in particular disprove the conjecture that a random analytic function with bounded mean oscillations always has \emph{vanishing} mean oscillations.
\end{abstract}

\section{Introduction}\label{sec:intro}
Functions with random Fourier (or Taylor) coefficients play an important role in harmonic and complex analysis, e.g. in the proof of de Leeuw, Kahane, and Katznelson \cite{deLeeuw1977} that Fourier coefficients of continuous functions can majorize any sequence in $\ell^2$. A well known phenomena is that series with independent random coefficients are much `nicer' than an arbitrary function would be. For example, a theorem of Paley and Zygmund \cite[Chapter 5, Proposition 10]{Kahane, PaleyZygmund} states that a Fourier series with square summable coefficients and random signs almost surely represents a \emph{subgaussian} function on the circle.

In this paper we choose to focus on one particularly nice model of random analytic functions, the \emph{Gaussian analytic functions} (GAFs). A GAF is given by a random Taylor series
\begin{equation}\label{eq:GAF}%
G\left(z\right)=\sum_{n=0}^{\infty}a_{n}\xi_{n}z^{n},
\end{equation}
where $\left\{ \xi_{n}\right\} _{n\ge 0}$ is
a sequence of independent standard complex Gaussians (i.e.\ with density $\frac{1}{\pi} e^{-|z|^2}$ with respect to Lebesgue measure on the complex plane $\bbc$)
and where $\left\{ a_{n}\right\}_{n\ge 0}$ is a sequence of 
non--negative constants. Many of the results we cite can be extended to more general probability distributions, and it is likely that our results can be similarly generalized, but we will not pursue this here. For recent accounts of random Taylor series, many of which focus on the distributions of their zeros, see for example \cite{nazarov2010random,ZerosBook}. A classical book on this and related subjects is \cite{Kahane}.

We are interested in properties of the sequence $\left\{ a_n \right\}$ that imply various regularity and finiteness properties of the function $G$ represented by the series \eqref{eq:GAF}. An important early effort is the aforementioned paper \cite{PaleyZygmund}, in which it was established that $G$ is almost surely in $\cap_{0 < p < \infty} \text{H}^p$ if and only if $\left\{ a_n \right\} \in \ell^2.$ We recall that $\text{H}^p$ can be characterized as the space of analytic functions whose \emph{non--tangential} boundary values on $\T= \left\{ z : |z|=1 \right\}$ exist and are in $\text{L}^p.$ One should compare this result with the well-known fact that a non-random analytic function belongs to $H^2$ if and only if the sequence of its Taylor coefficients is square summable. The related question of when $G$ is almost surely in $\text{H}^\infty,$ the \emph{bounded} analytic functions on the unit disk is substantially more involved (see \cite{MarcusPisier}).

To fix ideas, let us make for a moment a few simplifying assumptions about the coefficients $\{a_n\}$ of the series \eqref{eq:GAF}. We assume $a_0 = 0$,
and denote by
$$
\sigma_k^2 = \sum_{n=2^{k}}^{2^{k+1}-1} a_n^2,\qquad k\in \{0, 1,2,\dots\},
$$
the total variance of the dyadic blocks of coefficients. We say that the sequence $\{a_n\}$ (or equivalently $G$) is \emph{dyadic-regular} if the sequence $\{\sigma_k\}$ is \emph{decreasing} as $k \to \infty$. It is known (see \cite[Chapters 7 and 8]{Kahane}) that if $G$ is dyadic-regular, then $G$ is almost surely in $H^\infty$ if and only if
\begin{equation}\label{eq:bounded_cond}
\sum_{k=0}^\infty \sigma_k < \infty, \qquad i.e. \quad \{\sigma_k\} \in \ell^1.
\end{equation}
Moreover, if the series in \eqref{eq:bounded_cond} converges, then $G$ is almost surely \emph{continuous}. Hence, a bounded random series \emph{gains} additional regularity. 

For a space $S$ of analytic functions on the unit disk, let $S_G$ be the set of coefficients $\{a_n\}$ for which a GAF $G \in S$ almost surely. If $S \subsetneq T$, and $S_G = T_G$, then we say that GAFs have a \emph{regularity boost} from $T$ to $S$, e.g. $C_G = H^\infty_G$. This regularity boost can be viewed as a manifestation of a general probabilistic principle: a Borel probability measure on a complete metric space tends to be concentrated on a \emph{separable} subset of that space.\footnote{Under the continuum hypothesis, by the main theorem of \cite{MarczewskiSikorski}, any Borel probability measure on a metric space with the cardinality of the continuum is supported on a separable subset.}

Clearly there is a gap between \eqref{eq:bounded_cond} and the Paley-Zygmund condition $\{\sigma_k\} \in \ell^2$. A well-known function space that lies \emph{strictly} between $H^\infty$ and $\cap_{0 < p < \infty} \text{H}^p$ is the space of analytic functions of \emph{bounded mean oscillation} or $\BMOA$ (e.g. see \cite[Equation (5.4)]{Girela}). For an interval $I \subseteq [0,1],$ and any $f \in \text{L}^1(\T),$ put 

\begin{equation}
  M_{I}(f) \coloneqq \fint_{I}  | f(e(\theta)) - \textstyle{\fint_I }f|\,\dd \theta,
  \qquad
  \text{where}
  \quad
  \fint_I f \coloneqq
  \frac{1}{|I|} \int_{I} f(x)\,\dd x.
  \label{eq:MI}
\end{equation}
Define the seminorm on $\text{H}^1,$
\begin{equation}
  \starnorm[F]
  =\sup_{I \subseteq [0,1]} M_I(F).
  \label{eq:starnorm}
\end{equation}
The restriction for $F \in \text{H}^1$ is necessary for $F$ to have non--tangential boundary values in $\text{L}^1$ on the unit disk.  On the subspace of $\text{H}^1$ in which $F(0) = 0,$ this becomes a norm.  We may take $\BMOA$ to be the (closed) subspace of $\text{H}^1$ for which $\starnorm$ is finite.%

The space $\BMOA$ is famously \cite{FeffermanStein} the dual space of $\text{H}^1$ with respect to the bilinear form on analytic functions of the unit disk given by
\[
  (F,G) = \lim_{r\to1} \int_0^1 F(re(\theta))\overline{ G\left( re(\theta) \right)}\,\dd \theta,
\]
and in many aspects it serves as a convenient `replacement' for the space $H^\infty$. However, $\BMOA$ is not separable (\cite[Corollary 5.4]{Girela}).

One of our main results is the following.
\begin{theorem} \label{thm:dy_reg_in_VMOA}
A dyadic-regular Gaussian analytic function $G$ that satisfies the Paley-Zygmund condition $\{\sigma_k\} \in \ell^2$ almost surely belongs to $\VMOA$ - the space of analytic functions of \emph{vanishing} mean oscillation.
\end{theorem}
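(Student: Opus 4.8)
The plan is to reduce the $\VMOA$ statement to a quantitative estimate on the mean oscillations $M_I(G)$ and then control those oscillations through a dyadic (Littlewood--Paley-type) decomposition of $G$ combined with Gaussian concentration. Recall that $G \in \VMOA$ means that $M_I(G) \to 0$ as $|I| \to 0$, uniformly in the position of $I$; equivalently, writing $G_r(z) = G(rz)$, one has $\|G - G_r\|_* \to 0$ as $r \to 1$, and it suffices to show that the tail contributions $\sum_{k \ge K} a_n \xi_n z^n$ have $\|\cdot\|_*$-norm tending to $0$ almost surely as $K \to \infty$. So the first step is to fix a threshold $K$, split $G = P_K + R_K$ where $P_K$ is the polynomial part coming from dyadic blocks $0,\dots,K-1$ and $R_K$ is the remainder, note that $P_K$ (being a polynomial, hence continuous and analytic, hence in $\VMOA$) contributes nothing in the limit, and reduce to showing $\|R_K\|_* \Asto[K] 0$.

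The heart of the matter is a per-block estimate: for the $k$-th dyadic block $H_k(z) = \sum_{n=2^k}^{2^{k+1}-1} a_n \xi_n z^n$, I want a bound of the shape $\E \|H_k\|_* \lesssim \sigma_k \sqrt{k}$ or, better, a subgaussian tail $\Pr(\|H_k\|_* > t\sigma_k) \le C \exp(-ct^2/k)$ or similar, with the $\sqrt k$ (or $\log$) loss being exactly what one expects from supremizing over $\sim 2^k$ essentially independent intervals. To get this I would use the standard dictionary: on an interval $I$ of length comparable to $2^{-k}$ (the natural scale for block $k$), $M_I(H_k)$ is controlled by $|I| \sup_I |H_k'|$, and for longer intervals one uses that $H_k$ oscillates at frequency $\sim 2^k$ so the mean subtraction is very effective; in all cases one is reduced to a supremum of a Gaussian process (a trigonometric polynomial of degree $< 2^{k+1}$) whose expected sup is controlled by $\sigma_k\sqrt{\log 2^k} \asymp \sigma_k \sqrt k$ via Dudley's entropy bound or a simple union bound over a $2^{-k}$-net on $\T$ together with a Bernstein-type derivative estimate. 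Gaussian concentration (the Borell--TIS inequality) then upgrades the expectation bound to the subgaussian tail. Summing, $\E \|R_K\|_* \le \sum_{k \ge K} \E\|H_k\|_* \lesssim \sum_{k \ge K} \sigma_k \sqrt k$; but $\{\sigma_k\} \in \ell^2$ together with dyadic-regularity (monotonicity) forces $\sigma_k = o(1/\sqrt{k})$ — indeed $k\sigma_k^2 \le \sum_{j \le k}\sigma_j^2$ is bounded, and monotonicity plus square-summability give $\sigma_k^2 = o(1/k)$ — which is unfortunately just barely not enough for $\sum \sigma_k\sqrt k < \infty$.

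This is the step I expect to be the main obstacle: $\sum_k \sigma_k\sqrt k$ can diverge even under the hypotheses, so a crude block-by-block triangle inequality in $\|\cdot\|_*$ is too lossy and must be replaced by an argument that exploits independence \emph{across} blocks. The fix I would pursue is to not sum the $\|\cdot\|_*$-norms but rather to estimate $\|R_K\|_*$ directly as the supremum, over intervals $I$, of a single Gaussian process, organizing the analysis by comparing the scale $|I|$ to the block index: for a given $I$ with $|I| \asymp 2^{-m}$, only blocks with $k \lesssim m$ "see" $I$ as a long interval (and there the mean-oscillation is governed by $\sup_I$ of a high-frequency polynomial, which is small), while blocks with $k \gtrsim m$ contribute through $|I|\sup_I|H_k'| \lesssim 2^{-m} 2^k \cdot(\text{Gaussian})$, and crucially these contributions for different $k$ are independent mean-zero Gaussians, so their sum concentrates like the square root of the sum of variances rather than the sum of the square roots. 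Carrying this out gives, after a chaining/union bound over the $\sim 2^m$ intervals at each scale $m \ge K$ and over scales, a bound of the form $\|R_K\|_* \lesssim \sup_{m \ge K}\big(\sqrt m \,(\sum_{k \ge K}\min(1, 2^{2(k-m)})\sigma_k^2)^{1/2} + \text{similar}\big)$, and one checks this tends to $0$ a.s.\ because the relevant weighted $\ell^2$ sums are tails of the convergent series $\sum \sigma_k^2$ (with the $\sqrt m$ absorbed by the exponential decay $2^{2(k-m)}$ away from the diagonal and by $\sigma_m^2 = o(1/m)$ on the diagonal). I would isolate this "one Gaussian process over all scales" estimate as the key lemma, prove it by Dudley/Fernique chaining with a metric adapted to the dyadic structure, and then the a.s.\ $\VMOA$ conclusion follows by Borel--Cantelli along $K \to \infty$.
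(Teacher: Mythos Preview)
Your diagnosis of the obstacle is correct: the block-by-block triangle inequality in $\starnorm$ loses exactly the factor that matters, and you are right that one must exploit independence across blocks. But the proposed fix does not close the gap.

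First, a bookkeeping slip: for $|I|\asymp 2^{-m}$ it is the \emph{low}-frequency blocks $k\lesssim m$ that are handled by the derivative bound $M_I(H_k)\lesssim |I|\,\|H_k'\|_\infty\lesssim 2^{k-m}\|H_k\|_\infty$, while the high-frequency blocks $k\gtrsim m$ oscillate many times on $I$ and contribute essentially $\bigl(\fint_I |H_k|^2\bigr)^{1/2}$ with no damping factor. Your sentence has these reversed, and more importantly the resulting quantitative bound
\[
\sup_{m\ge K}\ \sqrt{m}\,\Bigl(\sum_{k\ge K}\min(1,2^{2(k-m)})\,\sigma_k^2\Bigr)^{1/2}
\]
does \emph{not} tend to $0$ under the hypotheses. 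Take $\sigma_k^2 = 1/(k\log^2 k)$, which is monotone and summable. Then $\sum_{k\ge m}\sigma_k^2\asymp 1/\log m$, so the displayed quantity is $\gtrsim \sqrt{m/\log m}\to\infty$. The exponential factor $2^{2(k-m)}$ and the diagonal decay $m\sigma_m^2\to 0$ control the range $k\le m$, but neither touches the genuine tail $\sum_{k>m}\sigma_k^2$, and that tail can decay arbitrarily slowly. So the ``one Gaussian process over all scales'' chaining, as you have set it up, still pays the $\sqrt{m}$ entropy cost against a quantity that need not be $o(1/m)$.

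There is also a structural issue behind this: $M_I(R_K)$ is not a Gaussian in the index $I$, and treating $\|R_K\|_*$ as the supremum of a Gaussian process is not quite right; it is a supremum of Lipschitz functionals of a Gaussian vector, and the Lipschitz constant (essentially $\|a\|_{\ell^2}$ for the tail) only gives concentration around the mean, not smallness of the mean.

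The paper circumvents both problems by replacing $\starnorm$ with the Sledd seminorm
\[
\TSledd[G]^2=\sup_{|x|=1}\sum_{n\ge 0}|T_n\star G(x)|^2,
\]
which dominates $\starnorm$ and is a supremum over the \emph{one}-dimensional parameter $x$ of a sum of squared moduli of Gaussians. The increments $F(\theta_1)-F(\theta_2)$ of $F(\theta)=\sum_n|R_n\star G(e(\theta))|^2$ then have an explicit Laplace transform (a product of $(1-\lambda^2(1-\rho_n^2)\sigma_n^4)^{-1}$), giving mixed subgaussian/subexponential tails and allowing Talagrand's $\gamma_1(d_\infty)+\gamma_2(d_2)$ bound; the resulting estimate is exactly $\sum_k\sup_{n\ge k}\sigma_n^2$, which under dyadic regularity is $\sum_k\sigma_k^2$. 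A separate zero--one argument (random rotations, Proposition~\ref{prop:rotations}) upgrades finiteness of $\TSledd[G]$ to the vanishing of the tail, hence $\VMOA$. The point is that passing to the sum-of-squares functional is what converts the problem into one where chaining succeeds; direct chaining on $\starnorm$ does not seem to access this structure.
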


The space $\VMOA$ is the closure of polynomials (or continuous functions) in the norm $\starnorm$, and hence it is separable. It can alternatively be characterized as the subspace of $\text{H}^1$ for which $\lim_{|I|\to 0} M_I^1(F) = 0.$ In fact, we show that a dyadic-regular GAF with square-summable coefficients almost surely belongs to a subspace of $\VMOA$, which we attribute to Sledd \cite{sledd1981random}.

\subsection{The Sledd Space $\Sledd$}

Sledd \cite{sledd1981random} introduced a function space, which is  contained in $\BMOA$ and much more amenable to analysis. Define the seminorm for $F \subset H^1$
\begin{equation}\label{eq:TSleddintro}
  \TSledd[F]^2 = \sup_{|x|=1} \sum_{n=0}^\infty | T_n \star F(x)|^2,
\end{equation}
where $\star$ denotes convolution, and $\{T_n\}$ is a certain sequence of compactly supported bump functions in Fourier space, so that $\widehat{T_n}=1$ for modes from $[2^n, 2^{n+1}]$  (see \eqref{eq:trapezoid} for details).  We let $\Sledd$ denote the subspace of $\text{H}^1$ with finite $\TSledd$ norm; \cite{sledd1981random} shows that $\Sledd \subsetneq \BMOA$.\footnote{The function $I_F= \sum_{n=0}^\infty | T_n \star F(x)|^2$ is essentially what appears in Littlewood--Paley theory.  For each $\frac23 < p < \infty,$ finiteness of the $p$--norm of $I_F$ is equivalent to being in $\text{H}^p,$ \cite[Theorem 5]{Stein}.  Thus, in some sense $\Sledd$ could be viewed as a natural point in the hierarchy of $\text{H}^{p}$ spaces.}
Sledd proved the following result.
\begin{exttheorem}[{\cite[Theorem 3.2]{sledd1981random}}] \label{thm:sleddcoefs}
  If $\{\sqrt{k} \, \sigma_k \} \in \ell^2$, then $G \in \VMOA$ almost surely.
\end{exttheorem}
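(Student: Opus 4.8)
The plan is to show that the hypothesis $\sum_k k\,\sigma_k^2<\infty$ (i.e.\ $\{\sqrt k\,\sigma_k\}\in\ell^2$) forces $G$ into the Sledd space $\Sledd$ almost surely, and, more precisely, that $G$ is approximated in the $\TSledd$-seminorm by its own Taylor polynomials $G_N(z)=\sum_{0\le m<N}a_m\xi_m z^m$. We use the inclusion $\Sledd\subseteq\BMOA$ recorded above in its quantitative form $\starnorm[F]\lesssim\TSledd[F]$; granting this, it suffices to prove that $\TSledd[G-G_{2^M}]\to 0$ in mean square as $M\to\infty$. Indeed, passing then to an almost surely convergent subsequence gives $\liminf_{M}\starnorm[G-G_{2^M}]=0$ a.s., so $G$ lies in the $\starnorm$-closure of the polynomials, which is exactly $\VMOA$. (That $G\in\text{H}^1$ a.s., so that boundary values exist, follows from the Paley--Zygmund theorem, since $\{\sigma_k\}\in\ell^2$.)

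The analytic core is a uniform-in-$x$ estimate for a single Littlewood--Paley block: for any Gaussian analytic function $H(z)=\sum_m b_m\xi_m z^m$,
\[
  \E\,\sup_{|x|=1}\,\bigl|T_n\star H(x)\bigr|^2\;\le\;C\,(n+1)\sum_{m}\bigl|\widehat{T_n}(m)\bigr|^2\,b_m^2 .
\]
For fixed $x$ with $|x|=1$, the quantity $T_n\star H(x)=\sum_m\widehat{T_n}(m)\,b_m\xi_m x^m$ is a centered complex Gaussian of variance $v^2:=\sum_m|\widehat{T_n}(m)|^2 b_m^2$, so $\Pr\bigl(|T_n\star H(x)|^2>s\bigr)\le e^{-s/v^2}$ for every $s>0$, uniformly in $x$. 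Since $\widehat{T_n}$ is supported in $O(1)$ dyadic blocks around $2^n$, the function $T_n\star H$ is an analytic polynomial of degree $D_n=O(2^n)$; by Bernstein's inequality its derivative along the circle has sup-norm at most $D_n\,\|T_n\star H\|_\infty$, so $\sup_{|x|=1}|T_n\star H(x)|\le 2\max_{1\le j\le N_n}|T_n\star H(\omega_j)|$ where $\omega_j$ runs over the $N_n$-th roots of unity and $N_n$ is a suitable fixed multiple of $2^n$. Finally, integrating the union bound over the $N_n$ exponential tails yields $\E\max_j|T_n\star H(\omega_j)|^2\le v^2(1+\log N_n)\le C\,(n+1)\,v^2$, which is the claim.

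Apply this with $H=G-G_{2^M}$, i.e.\ $b_m=a_m$ for $m\ge 2^M$ and $b_m=0$ otherwise. For each $n$ the variance appearing on the right is $\sum_{m\ge 2^M}|\widehat{T_n}(m)|^2 a_m^2\le\sigma_{n-1}^2+\sigma_n^2+\sigma_{n+1}^2=:v_n^2$, and it vanishes---indeed $T_n\star(G-G_{2^M})\equiv 0$---whenever $n\le M-2$, because then $\widehat{T_n}$ is supported entirely below $2^M$. Hence
\[
  \E\,\TSledd[G-G_{2^M}]^2\;\le\;\sum_{n=0}^\infty\E\,\sup_{|x|=1}\bigl|T_n\star(G-G_{2^M})(x)\bigr|^2\;\le\;C\sum_{n\ge M-1}(n+1)\,v_n^2 ,
\]
and the right-hand side tends to $0$ as $M\to\infty$, since $\sum_n(n+1)v_n^2\lesssim\sum_k (k+1)\sigma_k^2<\infty$ by hypothesis, so its tails vanish. (The case $M=0$ already gives $\E\,\TSledd[G]^2<\infty$, hence $G\in\Sledd\subseteq\BMOA$ a.s.) Combined with the reduction of the first paragraph, this yields $G\in\VMOA$ almost surely.

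The main obstacle is the middle step---passing from the pointwise Gaussian bound on $T_n\star H$ to control of its supremum over the entire circle. This rests on (i) a Bernstein-type discretization of the bandlimited polynomial $T_n\star H$ onto $O(2^n)$ sample points with an explicit comparison constant, and (ii) a Gaussian maximal inequality for the sampled values in which only the marginal variances, not the full covariance structure, enter, so that a crude union bound suffices. The cost of (ii) is the factor $\log(2^n)\asymp n$, which is exactly the weight $\sqrt{k}$ in the hypothesis; this is the point at which the estimate is (nearly) tight. The remaining ingredients---bookkeeping of which blocks survive in $G-G_{2^M}$, and the passage from mean-square convergence along a subsequence to $\VMOA$ membership---are routine.
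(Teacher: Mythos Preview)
Your argument is correct and is essentially the same as Sledd's original proof, which the paper does not reproduce but summarizes in a footnote: under the hypothesis one shows the \emph{stronger} condition $\sum_{n\ge 0}\sup_{|x|=1}|T_n\star G(x)|^2<\infty$ a.s., and this implies $G\in\Sledd\cap\VMOA$. Your block estimate $\E\,\|T_n\star G\|_\infty^2\lesssim (n+1)\sum_m|\widehat{T_n}(m)|^2 a_m^2$, obtained by Bernstein discretization to $O(2^n)$ points followed by a union-bound Gaussian maximal inequality, is exactly the computation behind that stronger condition (summed over $n$ and using Fubini); your concluding step---showing $\TSledd[G-G_{2^M}]\to 0$ in $L^2$ and hence $\starnorm[G-G_{2^M}]\to 0$ along a subsequence---is a mild rephrasing of the paper's ``tail'' criterion for $\VMOA$ membership.
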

\begin{remark}
Sledd proved the result for series with random signs, but his method works also in our setting. In fact his theorem shows that $G$ is almost surely in $\VMOA \cap \Sledd.$
\end{remark}

We extend the analysis of the $\TSledd$ seminorm, and in particular find a better sufficient condition for the finiteness of $\TSledd[G]$. %
\begin{theorem}\label{thm:bettersledd}
  If $\sum_{k=1}^\infty \sup_{n \geq k}\{ \sigma_n^2 \} < \infty,$ then $G \in \Sledd$ almost surely.
\end{theorem}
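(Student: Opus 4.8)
The plan is to control the supremum over $|x|=1$ of the Littlewood--Paley-type sum $I_G(x) = \sum_{n=0}^\infty |T_n\star G(x)|^2$ by combining a pointwise Gaussian tail estimate with a chaining/union-bound argument over the circle, exactly in the spirit of Sledd's argument but with the sharper hypothesis $\sum_k \sup_{n\ge k}\sigma_n^2 < \infty$. First I would set $s_k := \sup_{n\ge k}\sigma_n^2$, so that $\{s_k\}$ is nonincreasing and summable, and observe that $\sigma_n^2 \le s_n$ for every $n$ and that summability of $\{s_k\}$ forces $k\, s_k \to 0$, hence $s_k = o(1/k)$. For a fixed point $x$ on the circle, $T_n\star G(x)$ is a centered complex Gaussian whose variance is a weighted sum of the $a_j^2$ over the frequency window where $\widehat{T_n}$ is supported, which (because the windows $[2^n,2^{n+1}]$ essentially match the dyadic blocks defining $\sigma_n$, up to a bounded overlap of neighbors) is comparable to $\sigma_n^2$, and in any case $\le C(\sigma_{n-1}^2 + \sigma_n^2 + \sigma_{n+1}^2) \le C' s_{n-1}$. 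Thus $\mathbb{E}|T_n\star G(x)|^2 \le C' s_{n-1}$ and $\mathbb{E}\,I_G(x) \le C'\sum_{n} s_{n-1} < \infty$, uniformly in $x$; moreover $I_G(x)$ is a sum of independent nonnegative random variables with exponential tails, so it concentrates around its mean.

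Next I would upgrade this pointwise bound to a uniform-in-$x$ bound. The standard device is a dyadic discretization of the circle: on scale $2^{-N}$ one picks $O(2^N)$ net points, and since the $n$-th term $T_n\star G$ has Fourier support bounded by $2^{n+1}$, its derivative is controlled (Bernstein's inequality) so that the contribution of the blocks with $n\le N$ is essentially constant on each arc of length $2^{-N}$, while the tail blocks $n > N$ contribute at most $\sum_{n>N} C' s_{n-1}$ in expectation, which is the tail of a convergent series and hence small. Applying the Gaussian concentration/Borell--TIS inequality (or simply a union bound over the $O(2^N)$ net points against the exponential tail of a sum of independent subexponential terms) shows that $\sup_{|x|=1} I_G(x)$ is finite almost surely; one sums over $N$ along a subsequence and invokes Borel--Cantelli. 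This is the step where the improved hypothesis matters: Sledd's $\{\sqrt{k}\,\sigma_k\}\in\ell^2$ is exactly what is needed if one bounds the tail crudely by $\sum_{n>N}\sigma_n^2$ times a logarithmic (number-of-net-points) factor, whereas replacing $\sigma_n^2$ by the nonincreasing majorant $s_n$ and using that $\sum s_n<\infty$ lets the tail be summed directly without paying the extra factor.

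The main obstacle I anticipate is making the uniformization genuinely quantitative: one must show that $\sup_x I_G(x) < \infty$ a.s.\ rather than merely $\mathbb{E}\sup_x I_G(x) < \infty$, and the clean way to do this is a two-parameter estimate giving, for each scale $N$, a bound of the form $\Pr[\sup_{|x|=1} I_G(x) > C\sum_n s_n + t] \le 2^{N+1}\exp(-c\, t/\max_n s_n) + (\text{oscillation term})$, then choosing $t = t_N$ growing slowly (e.g.\ $t_N \asymp N \max(s_n)$ is already summable-type since $s_n = o(1/n)$) so that the right-hand side is summable in $N$; the oscillation term, controlled by Bernstein's inequality applied to the finitely many low-frequency blocks, must be shown to go to $0$ as $N\to\infty$ along the net. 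Once $\TSledd[G] < \infty$ a.s.\ is established, the conclusion $G \in \Sledd$ is immediate, and combined with $G\in H^1$ a.s.\ (which follows from $\{\sigma_k\}\in\ell^2 \subseteq \ell^2$ and the Paley--Zygmund result cited in the introduction) we get membership in the Sledd space as claimed.
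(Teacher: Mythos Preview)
Your proposal has a genuine gap: the net-plus-union-bound scheme you describe does not reach the hypothesis $\sum_k s_k<\infty$; at best it recovers Sledd's original condition $\sum_k k\sigma_k^2<\infty$. The problem is the high-frequency tail. At scale $2^{-N}$ you want to control $\sum_{n\le N}|T_n\star G(x)|^2$ on a net and dismiss $\sum_{n>N}|T_n\star G(x)|^2$ as ``small in expectation,'' but that is a \emph{pointwise} statement---$\sup_x\Exp[\,\cdot\,]$ is finite for every $\text{H}^2$-GAF and carries no information---whereas you need $\Exp\sup_x[\,\cdot\,]$. The blocks with $n>N$ oscillate on scales $\ll 2^{-N}$, so the net does not see them, and the only cheap uniform bound is $\sup_x\sum_{n>N}|T_n\star G|^2\le\sum_{n>N}\|T_n\star G\|_\infty^2$; since $\Exp\|T_n\star G\|_\infty^2\asymp n\sigma_n^2$ this lands you back at Sledd's hypothesis. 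Your displayed tail estimate has the same defect: if $\max_n s_n$ means $s_0$ then $t_N\asymp Ns_0\to\infty$ and you get no finite bound; if it means $s_{N+1}$ (so that $t_N\to 0$, which seems to be your intent from ``$s_n=o(1/n)$'') then the bound controls only the tail sum at net points and says nothing about its supremum over the circle. The sentence ``replacing $\sigma_n^2$ by the nonincreasing majorant $s_n$ lets the tail be summed directly without paying the extra factor'' is precisely where the proof would have to be, and as written it is not an argument.

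The paper's proof takes a different route: it never bounds $F(\theta)=\sum_n|R_n\star G(e(\theta))|^2$ at individual points but controls \emph{increments} $F(\theta_1)-F(\theta_2)$. The key input is an exact Laplace-transform identity: for jointly Gaussian unit-variance $(H_1,H_2)$ with $|\Exp H_1\overline{H_2}|=\rho$ one has $\Exp\,e^{\lambda(|H_1|^2-|H_2|^2)}=(1-\lambda^2(1-\rho^2))^{-1}$. Applied blockwise, each term of the increment has variance $\sigma_n^4(1-\rho_n^2(\theta_1-\theta_2))$---note $\sigma_n^4$, not the $n\sigma_n^2$ that a Bernstein-polynomial oscillation bound would produce---yielding a mixed subgaussian/subexponential tail governed by two pseudometrics $d_2,d_\infty$ on the circle. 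Talagrand's generic chaining then gives $\Exp\sup_\theta F(\theta)\lesssim\gamma_1(d_\infty)+\gamma_2(d_2)$, and the elementary estimate $1-\rho_n^2(\theta)\lesssim\min(1,2^{2n}\theta^2)$ together with Cauchy condensation shows this is $\lesssim\sum_k\tau_k^2$ with $\tau_k^2=\sup_{n\ge k}\sigma_n^2$. The whole gain over your scheme comes from the fact that chaining on increments captures the angular decay of $1-\rho_n^2$ at small scales, which a union bound over values at net points cannot.
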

In particular, if $G$ is dyadic-regular and $\{ \sigma_k \} \in \ell^2$, then $G \in \Sledd.$  The latter condition is necessary for $G$ to have well--defined boundary values, and so we see that under the monotonicity assumption, a GAF $G$ which has boundary values in $\text{L}^2$ is in $\BMOA.$  We also note that the condition in Theorem \ref{thm:bettersledd} is strictly weaker than the one in Theorem \ref{thm:sleddcoefs} (see Lemma \ref{lem:lac}).

The Sledd space $\Sledd$ is non--separable (see Proposition \ref{prop:sleddnonsep}). 
The proof of Theorem \ref{thm:sleddcoefs} is based on a stronger condition than $\TSledd[G] {}< \infty$, that in addition implies that a function is in the space $\Sledd \cap \VMOA$.\footnote{ Specifically, \cite{sledd1981random} shows that under the condition in Theorem \ref{thm:sleddcoefs}, $\sum_{n=0}^\infty \sup_{|x|=1}  | T_n \star F(x)|^2$ is finite, which implies $F \in \Sledd \cap \VMOA.$}
We show that this is unnecessary, as a GAF which is in $\Sledd$ has a regularity boost:
\begin{theorem}\label{thm:sleddVMOA}
  If $G \in \Sledd$ almost surely, then $G \in \VMOA$ almost surely.
\end{theorem}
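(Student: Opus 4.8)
The plan is to exploit a zero-one law together with the fact that $\VMOA$ is exactly the closure of polynomials inside the $\Sledd$-norm topology (or, more precisely, to show that membership in $\Sledd$ forces the tails of the Littlewood--Paley square function to be uniformly small). First I would record the standard reduction: by Kolmogorov's zero-one law (the events depend on tails of the i.i.d.\ sequence $\{\xi_n\}$), the statement ``$G \in \Sledd$'' has probability $0$ or $1$, and likewise for ``$G \in \VMOA$'', so it suffices to prove the implication for the almost-sure event. Write $G_N(z) = \sum_{n > N} a_n \xi_n z^n$ for the tail of the series and $P_N = G - G_N$ for the polynomial part. Since $\VMOA$ contains all polynomials, $G \in \VMOA$ iff $G_N \in \VMOA$ for some (all) $N$, and membership in $\VMOA$ can be detected via $\lim_{m\to\infty}\sup_{|x|=1}\sum_{n \ge m}|T_n \star F(x)|^2 = 0$ after passing to the Sledd-type square function; the key analytic input is that for $F \in \Sledd$ one has the characterization that $F \in \VMOA$ precisely when the ``high-frequency'' Sledd functional $\sup_{|x|=1}\sum_{n \ge m}|T_n \star F(x)|^2 \to 0$ as $m \to \infty$.

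The heart of the argument is therefore to show: if $\{a_n\}$ is such that $G \in \Sledd$ a.s., then almost surely $\sup_{|x|=1}\sum_{n \ge m}|T_n \star G(x)|^2 \to 0$ as $m \to \infty$. I would prove this by a contradiction/concentration argument. Suppose not; then with positive probability there is $\delta>0$ and a sequence $m_j \to \infty$ and points $x_j$ with $\sum_{n \ge m_j}|T_n \star G(x_j)|^2 \ge \delta$. Because $T_n \star G$ depends only on the coefficients in the block $[2^n, 2^{n+1}]$, the blocks for $n \ge m_j$ are independent of the first $m_j$ blocks; one can then use a second-moment/Borel--Cantelli or a Gaussian concentration (Borell--TIS) estimate on the random field $x \mapsto \sum_{n \in [m_j, M_j]}|T_n \star G(x)|^2$ to show that if its supremum fails to be small, then in fact $\TSledd[G_{m_j}]$ — and hence $\TSledd[G]$ — is infinite with probability one, contradicting $G \in \Sledd$ a.s. The mechanism is the usual one: the supremum over $|x|=1$ of a sum of squares of independent Gaussian fields concentrates around its mean, so a single ``bad'' scale appearing with positive probability propagates (via independence across scales) to give divergence of the full sum almost surely.

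Concretely, I expect the cleanest route is: (1) show $\E \sup_{|x|=1}\sum_{n=m}^{M}|T_n \star G(x)|^2 =: \mu_{m,M}$ controls the random variable up to Gaussian fluctuations of order $\sqrt{\mu_{m,M}}\cdot(\text{log factor})$ plus the standard-deviation term, using that $T_n \star G$ is a complex Gaussian process with explicit covariance built from $\{a_k^2\}_{k \in [2^n,2^{n+1}]}$; (2) observe $\TSledd[G]^2 = \sup_x \sum_{n=0}^\infty |T_n\star G(x)|^2 < \infty$ a.s.\ forces $\sum_{m} (\mu_{m,m} \wedge 1)$-type quantities to be summable, equivalently $\mu_{m,\infty} \to 0$; (3) transfer $\mu_{m,\infty}\to 0$ back to the almost-sure statement $\sup_x \sum_{n\ge m}|T_n\star G(x)|^2 \to 0$ via the concentration from step (1) and Borel--Cantelli along $m$; (4) conclude $G\in\VMOA$ a.s.\ from the tail characterization of $\VMOA \cap \Sledd$. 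The main obstacle is step (2)–(3): extracting from the mere almost-sure finiteness of the supremum of the \emph{full} square function the quantitative decay of the \emph{tail} suprema $\mu_{m,\infty}$, and then upgrading that decay back to an almost-sure tail bound uniformly in $x$. This requires a genuine two-sided concentration estimate for suprema of Gaussian sums-of-squares fields on the circle (an entropy/chaining bound for the modulus of continuity of $x\mapsto T_n\star G(x)$, together with Borell's inequality applied to $\sqrt{\sum |T_n\star G(x)|^2}$), rather than the soft zero-one law alone; I would isolate this as a lemma on Gaussian Littlewood--Paley square functions and expect its proof to be where the real work lies.
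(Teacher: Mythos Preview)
Your outline is close to the paper's argument in spirit---reduce to showing the tail Sledd functional $\sup_{|x|=1}\sum_{n\ge m}|T_n\star G(x)|^2$ tends to $0$, argue by contradiction using the zero-one law, and exploit independence of dyadic blocks---but the proposed mechanism for closing the contradiction has a genuine gap. Suppose, following your plan, you extract disjoint blocks $u_k=\sum_{n=m_k}^{m_k'}|R_n\star G|^2$ with $\Pr(\|u_k\|_\infty>\delta)>\delta$; your concentration argument can indeed upgrade this (since $\sup_{n\ge m_k}\sigma_n\to 0$) to $\|u_k\|_\infty>c\delta$ for all large $k$ almost surely. But the point $x_k$ at which $u_k$ is large varies with $k$, and nothing in Borell--TIS or a chaining bound on the modulus of continuity forces these points to accumulate. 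To contradict $\RSledd[G]<\infty$ you need a \emph{single} $x$ with $\sum_k u_k(x)=\infty$, and concentration for each block separately, however sharp, does not produce this alignment.

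The paper's proof (Theorem~\ref{thm:sleddvmoa}) fills exactly this gap with a tool you did not mention: a lemma of Kahane (Proposition~\ref{prop:rotations} here; \cite[Chapter~5, Proposition~12]{Kahane}) stating that if $\limsup_k\|u_k\|_\infty>0$ and $(\theta_k)$ are i.i.d.\ uniform, then almost surely there exists a single $t$ with $\limsup_k|u_k(e(t-\theta_k))|>0$. One then uses that the joint law of $\{|R_n\star G|^2\}$ is invariant under independent rotations of each dyadic block, so the rotated process has the same distribution as the original, and hence almost surely there is a fixed point at which infinitely many $u_k$ exceed a positive constant---forcing $\RSledd[G]=\infty$. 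This rotation trick, not concentration, is the real engine; your step~(2)--(3) cannot be completed without it or something equivalent.
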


This could raise suspicion that there is also a regularity boost from $\BMOA$ to $\VMOA,$ which is perhaps the most natural separable subspace of $\BMOA$.  Indeed, Sledd \cite{sledd1981random} asks whether it is possible to construct a non-$\VMOA$ random analytic function in $\BMOA$.

\subsection{Exceptional Gaussian analytic functions}
Sledd \cite[Theorem 3.5]{sledd1981random} gives a construction of a random analytic function with square summable coefficients which is not in $\BMOA$, and moreover is not \emph{Bloch} (this construction can be easily adapted to GAFs). The Bloch space $\Bloch,$ contains all analytic functions $F$ on the unit disk for which
\begin{equation}
  \Blochnorm[F] \Def \sup_{|z| \leq 1} (  (1-|z|^2)|F'(z)|) < \infty.
  \label{eq:bloch}
\end{equation}
See \cite{Girela} or \cite{anderson1974bloch} for more background on this space. Gao \cite{gao2000characterization} provides a complete characterization of which sequences of coefficients $\left\{ a_n \right\}$ give GAFs in $\Bloch.$

The space $\Bloch$ is non--separable, suggesting that GAFs in $\Bloch$ could concentrate on much smaller space. Finding this space is a natural open question and does not seem obvious from the characterization in \cite{gao2000characterization}. It is known that $\BMOA \subset \Bloch$ (e.g. \cite[Corollary 5.2]{Girela}), and \emph{a priori}, it could  be that GAFs which are in $\text{H}^2 \cap \Bloch$ are automatically in $\BMOA.$ However, our following result disproves this, and also answers the aforementioned question of Sledd.

\begin{theorem}\label{thm:strict}
We have,
\begin{equation}\label{eq:strict}
  \Sledd_G
  \subsetneq
  \VMOA_G  
  \subsetneq
  \BMOA_G  
  \subsetneq
  (\text{H}^2 \cap \Bloch)_G.
\end{equation}
\end{theorem}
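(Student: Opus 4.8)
The plan is to establish the four strict inclusions in \eqref{eq:strict} separately, producing for each one an explicit GAF lying in the larger class of coefficient sequences but not the smaller. For the first inclusion $\Sledd_G \subsetneq \VMOA_G$, I would look for a dyadic-regular or lacunary-type coefficient sequence for which the Sledd square function $I_G = \sum_n |T_n \star G(x)|^2$ is almost surely unbounded on $\T$, yet $G$ is still almost surely in $\VMOA$. Theorem~\ref{thm:dy_reg_in_VMOA} and Theorem~\ref{thm:bettersledd} suggest the boundary case: choose $\{\sigma_k\}\in\ell^2$ but violating the hypothesis $\sum_k \sup_{n\ge k}\sigma_n^2 <\infty$ of Theorem~\ref{thm:bettersledd}; this forces non-dyadic-regularity, so one wants a sequence of $\sigma_k$ with spikes along a sparse set of scales. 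The point will be to show that at those spike scales $\TSledd[G]=\infty$ a.s.\ (a second-moment / Borel--Cantelli estimate for the Gaussian square function), while the $\VMOA$ property is salvaged by a direct estimate on $M_I(G)$ — essentially because a single spiked dyadic block contributes a continuous (indeed polynomial-like) piece whose mean oscillation is controlled, and the tail is small.

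For the second inclusion $\VMOA_G \subsetneq \BMOA_G$, the target is a GAF that is a.s.\ in $\BMOA$ but a.s.\ \emph{not} in $\VMOA$; this is precisely the construction answering Sledd's question. Here the key is to build a lacunary-in-scale GAF whose mean oscillation $M_I(G)$ does not tend to $0$ as $|I|\to 0$, but stays bounded. I would take $\sigma_k$ supported on a very sparse set of scales $k_j$ with $\sigma_{k_j}$ a fixed constant (so $\{\sigma_k\}\notin\ell^2$ — wait, that would lose $\text{H}^2$), so instead $\sigma_{k_j}\to 0$ slowly while being constant on long runs, arranged so that $\sum\sigma_k^2<\infty$ but the partial Sledd sums near scale $k_j$ stay of order one. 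The BMO bound should follow from an argument in the spirit of Sledd's Theorem~\ref{thm:sleddcoefs} / Littlewood--Paley: one shows $\sup_I M_I(G)<\infty$ a.s.\ via a chaining/maximal-inequality bound on the Gaussian field, using that only boundedly many scales interact with any given interval $I$. The failure of VMOA comes from exhibiting, for arbitrarily small intervals $I$ concentrated near scale $k_j$, a lower bound $M_I(G)\gtrsim 1$ with probability bounded below, then upgrading to an a.s.\ statement by independence across the $j$'s and Borel--Cantelli. This is the step I expect to be the main obstacle: the upper BMO bound and the lower oscillation bound must be made compatible, which requires a delicately tuned sequence $\{k_j\}$ and careful control of the Gaussian increments at the relevant scales.

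For the third inclusion $\BMOA_G \subsetneq (\text{H}^2\cap\Bloch)_G$, I would invoke Gao's characterization \cite{gao2000characterization} of $\Bloch_G$ and the known containment $\BMOA\subset\Bloch$: the goal is a coefficient sequence passing Gao's Bloch criterion and square-summability but failing to be in $\BMOA_G$. A natural candidate is again a lacunary construction, now tuned so that the Bloch quantity $(1-|z|^2)|G'(z)|$ stays bounded a.s.\ (a logarithmic-type growth of $\sum_{n\le N} n^2 a_n^2$ is the relevant threshold) while the BMO seminorm blows up — the obstruction to $\BMOA$ being that the mean oscillations grow without bound on a sparse sequence of scales, detected by a second-moment lower bound on $M_I(G)$ combined with Borel--Cantelli over independent scale blocks. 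One should double-check that Sledd's own Theorem~3.5 construction (non-Bloch, so too strong here) can be \emph{weakened} to land exactly in the Bloch space; alternatively build from scratch with $\sigma_{k_j}$ decaying like $1/\sqrt{j}$ along sparse scales $k_j$, which keeps $\ell^2$ and, with the scales sparse enough, keeps the Bloch norm finite while forcing unbounded mean oscillation.

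Across all three constructions the recurring technical engine is the same: for a GAF, $M_I(G)$ and the Sledd square function are (sub)Gaussian functionals of the coefficients, so one gets sharp two-sided control via the second moment method, Gaussian concentration (Borell--TIS), and a union/chaining bound over a net of intervals; independence across well-separated dyadic scale blocks then converts "with positive probability" lower bounds into almost-sure ones via Borel--Cantelli. The main obstacle, as noted, is the middle inclusion: simultaneously certifying membership in $\BMOA$ (a $\sup$ over \emph{all} intervals, needing a global maximal estimate) and non-membership in $\VMOA$ (needing persistent oscillation at small scales) for one and the same sequence. I would handle this by separating scales into "active" blocks around the sparse $\{k_j\}$ and an inert remainder, bounding the remainder's contribution to $M_I(G)$ uniformly and analyzing each active block as an essentially independent finite-dimensional Gaussian problem, for which both the upper bound on its BMO contribution and the lower bound on the oscillation it produces at scale $\sim 2^{-k_j}$ can be computed explicitly.
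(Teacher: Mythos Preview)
Your plan for the first two strict inclusions is broadly aligned with the paper's, though the paper supplies two ingredients you have not identified. First, a deterministic lemma (Lemma~\ref{lem:tao}) showing that for a polynomial $p$ with Fourier support in $[2^n,2^{n+1}]$ one has $\|p\|_{*,n-m}\asymp\|p\|_\infty$ for a fixed shift $m$; this is what converts sup-norm control on dyadic Gaussian blocks (Lemma~\ref{lem:blockinfinity}) into two-sided oscillation information at a specific scale, and is the mechanism behind both the non-VMOA lower bound and the BMOA upper bound. Second, for the non-Sledd example the paper uses a nested random-interval construction (Lemma~\ref{lem:nesting}) that manufactures a single point $x$ at which infinitely many blocks satisfy $|f_{k_\ell}(x)|>\tfrac14$ simultaneously, rather than a bare Borel--Cantelli on the square function. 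Your ``chaining/maximal'' route for the BMOA upper bound is replaced in the paper by an elementary four-piece decomposition $g=g_{<k-1}+g_{k-1}+g_k+g_{>k}$ at each scale, exploiting that the block locations are taken super-exponentially sparse ($n_{k+1}=3^{n_k}$); no chaining is needed, and your worry about losing $\text{H}^2$ is handled by the $1/\sqrt{n\log n}$ normalization of the blocks.

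The genuine gap is in your proposed construction for $\BMOA_G\subsetneq(\text{H}^2\cap\Bloch)_G$. A lacunary-in-scale GAF of the sort you describe will not separate Bloch from BMOA: for Hadamard-lacunary series, and more generally for block-lacunary series with well-separated supports, membership in $\Bloch$ and in $\BMOA$ are governed by the \emph{same} condition (boundedness of the block sup-norms), so once the Bloch norm is finite the BMO norm will be too. Concretely, the paper's own Lemma~\ref{lem:gbmoa} shows that any GAF of the form $\sum a_k f_{2^{n_k}}$ with $\{a_k\}$ bounded and $n_k$ super-sparse is automatically in BMOA; the decay $\sigma_{k_j}\sim 1/\sqrt{j}$ you propose only makes matters better. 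The paper's construction (Lemma~\ref{lem:gady}, attributed to Kozma) is of an entirely different, \emph{non-lacunary} nature: one takes $r^2$ frequencies $\lfloor n\lambda_{i,j}\rfloor$ with the $\lambda_{i,j}\in[2^i,2^i+4^{-r}]$ chosen rationally independent over $\Q$. From the Bloch side the frequencies cluster near the lacunary sequence $\{n2^i:1\le i\le r\}$, yielding $\Exp\Blochnorm[f]\lesssim 1$; but from the BMO side the rational independence lets one choose an interval of length $1/n$ on which all $r^2$ phases align with the signs of $\Re\zeta_{i,j}$, forcing $\Exp\starnorm[f]\gtrsim\sqrt{r}$. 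Summing rescaled independent copies then gives a Bloch, $\text{H}^2$, non-BMOA GAF. No tuning of a lacunary block construction reproduces this phase-alignment mechanism, so this part of your plan would need to be replaced.
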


\begin{remark}
From Theorem \ref{thm:bettersledd}, and standard results on boundedness of Gaussian processes, we may add that $\text{H}^{\infty}_G \subsetneq \Sledd_G.$  From the example in \cite{sledd1981random}, it also follows that $(\text{H}^2 \cap \Bloch)_G \subsetneq \text{H}^2_G$ 
\end{remark}

We leave open the question of the existence of a natural separable subspace $S$ of $\BMOA$ such that $\BMOA_G=S_G.$ 

\subsection{Some previously known results}
Billard \cite{Billard} (see also \cite[Chapter 5]{Kahane}) proved that a random analytic function with independent symmetric coefficients is almost surely in $\text{H}^\infty$ if and only if it almost surely extends continuously to the closed unit disk.

A complete characterization of Gaussian analytic functions which are bounded on the unit disk was found by Marcus and Pisier \cite{MarcusPisier} in terms of rearrangements of the covariance function (see also \cite[Chapter 15]{Kahane}). They moreover show the answer is the same for Steinhaus and Rademacher random series (where the common law of all $\{\xi_n\}$ is taken uniform on the unit circle or on $\left\{ \pm 1 \right\},$ respectively).  Their criterion can be seen to be equivalent to the finiteness of Dudley's entropy integral for the process of boundary values of $G$ on the unit circle.

The best existing sufficient conditions that we know for the sequence $\left\{ a_n \right\}$ to belong to $\BMOA_G$ are due to \cite{sledd1981random} and \cite{Hasi}. The conditions of \cite{Hasi} are complicated to state, but they recover the condition in Theorem \ref{thm:sleddcoefs}. If we define
\begin{equation}\label{eq:sigma}
  B_k^2 = \sum_{n=1}^k 2^{2n}\sigma_n^2
  \qquad
  \text{for}\quad
  k \in \{0,1,2,\dots\},
\end{equation}
then one corollary of \cite{Hasi} is:
\begin{exttheorem} \label{thm:Hasi}
  If for some $0 < p \leq 2,$ 
  \[
  \sum_{n=1}^\infty 2^{-2pn} \biggl(\sum_{k=1}^n 2^k\sqrt{k}B_k\biggr)^p < \infty,
  \]
  then $G \in \BMOA$ almost surely.  
\end{exttheorem}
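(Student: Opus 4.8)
The plan is to deduce the almost sure membership $G\in\BMOA$ from a single $p$-th moment estimate, using the Carleson measure description of $\BMOA$ and a Whitney-type decomposition whose building blocks are measured by the quantities $B_k$; the freedom $0<p\le 2$ is exactly what makes the moment reduction go through. By the Fefferman--Stein theorem recalled above, $\starnorm[G]^2\asymp |G(0)|^2+\sup_{I}\tfrac{1}{|I|}\int_{S(I)}|G'(z)|^2(1-|z|^2)\,\dd A(z)$, where $S(I)=\{z\in\D: z/|z|\in I,\ 1-|z|\le |I|\}$ is the Carleson box over $I$ and $\dd A$ is planar Lebesgue measure, and it suffices to test this over dyadic arcs. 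So it is enough to prove $\E\bigl[\starnorm[G]^p\bigr]<\infty$. The structural point is that $t\mapsto t^{p/2}$ is concave and subadditive for $0<p\le 2$: this lets one dominate the supremum over the countably many dyadic arcs by a sum of individual $(p/2)$-moments, split each box $S(I)$ into the Whitney annuli $S(I)\cap A_j$ with $A_j=\{1-2^{-j}\le|z|<1-2^{-j-1}\}$ and distribute the $(p/2)$-power over that sum, and bound the $(p/2)$-moment of a Gaussian chaos by a power of its mean. Matters thus reduce to controlling, uniformly over the dyadic scale $n$ of $I$, the normalized box integral $2^n\int_{S(I)}|G'(z)|^2(1-|z|^2)\,\dd A$.

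The quantity $B_k$ enters as a deterministic dictionary for the size of $G'$ inside the disk: for $1-|z|\asymp 2^{-k}$ one computes $\E|G'(z)|^2=\sum_n n^2 a_n^2|z|^{2(n-1)}\asymp\sum_{m\le k}2^{2m}\sigma_m^2=B_k^2$, the blocks of index $\gg k$ being exponentially suppressed at that radius. Hence $\{B_k\}$ plays the role that $\{2^k\sigma_k\}$ plays under the monotonicity assumption of Theorem \ref{thm:sleddcoefs}, and replacing one by the other is precisely what weakens the hypothesis. Within a box $S(I)$ of scale $n$, the contribution of the annulus $A_j$ (for $j\ge n$) is governed by a near-maximum of $|G'|$ over the $\sim 2^{j-n}$ boundary-parallel Whitney cells there; a Gaussian maximal / Dudley chaining estimate for the dilated boundary field of $G'$ at radius $1-2^{-j}$ promotes the $L^2$-scale $B_j$ to $\sqrt{k}\,B_j$ up to tails that are summable against the hypothesis. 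Carrying out this depth-by-depth accounting inside a box of scale $n$, and then summing over $n$, is what is designed to reproduce exactly the nested expression $2^{-2pn}\bigl(\sum_{k\le n}2^k\sqrt{k}\,B_k\bigr)^{p}$.

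The probabilistic input then has to be supplied with care: for each fixed dyadic $I$ the random variable $2^n\int_{S(I)}|G'(z)|^2(1-|z|^2)\,\dd A$ is a second-order Gaussian chaos whose mean is $\asymp\sum_{j\ge n}2^{-2j}B_j^2$, and the passage from a fixed box to the supremum over the $2^n$ boxes at scale $n$ must be made through concentration of the chaos (or a chaining bound for the process $\{\int_{S(I)}|G'(z)|^2(1-|z|^2)\,\dd A\}_I$), not a crude union bound; this is where the logarithmic $\sqrt{k}$ losses come from, and pinning them down so that the final sum is controlled by exactly $\sum_n 2^{-2pn}\bigl(\sum_{k\le n}2^k\sqrt{k}\,B_k\bigr)^p$, rather than by a lossier or needlessly stronger quantity, is, I expect, the main technical obstacle. (Alternatively, and more economically, one may simply invoke the main theorem of \cite{Hasi}, of which the displayed statement is the quoted corollary, and verify that its hypothesis specializes to the one above.)
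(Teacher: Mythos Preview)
This statement is not proved in the paper at all: it is labeled as an external theorem and attributed to \cite{Hasi} as ``one corollary'' of that work. There is nothing to compare your sketch against, because the paper simply quotes the result.

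Your parenthetical closing remark is therefore the right disposition: invoke \cite{Hasi} and check that its hypothesis specializes as claimed. The preceding three paragraphs are a heuristic outline rather than a proof --- you yourself flag the passage from a fixed box to the supremum over $2^n$ boxes via chaos concentration as ``the main technical obstacle'' and leave it unresolved --- and they are not needed here. If you want to supply an independent argument, you would have to actually carry out that concentration/chaining step and verify that the losses match the displayed sum; as written, the sketch does not do this.
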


\subsection{Norms of random Hankel matrices}

A \emph{Hankel} matrix $A$ is any $n \times n$ matrix with the structure $A_{ij} = (c_{i+j-2})$ for some sequence $\left\{ c_i \right\}_0^\infty.$  We will consider the case that $n \in \N$, and we will consider the infinite case, in which case we refer to $A$ as the Hankel operator on $\ell^2,$ which may well be unbounded.
Let $\phi(z) = \sum_{n=0}^\infty c_n z^{n+1}$ be the \emph{symbol} of $A$.  Then from a Theorem of \cite[Part III]{Holland}, there is an absolute constant $M$ so that
\begin{equation}\label{eq:HW}
  \frac{1}{M} \|\phi\|_* \leq \|A\| \leq M\|\phi\|_*,
\end{equation}
with $\|A\|$ the operator norm of $A.$

If we take $c_m = a_{m+1} \xi_{m+1}$ for all $m \geq 0$ with $\{\xi_m\}$ i.i.d.\ $\stdcg$  and with $a_m \geq 0$ for all $m,$ then $\phi$ is exactly the GAF $G$.  Moreover by combining Theorem \ref{thm:sledd}, Remark \ref{rem:sleddequiv} and Lemma \ref{lem:g2sucks}, we have that there is an absolute constant $C>0$ so that
\[
  \Exp \|\phi\|_*^2 \leq C\sum_{k=1}^\infty \sup_{m \geq k}\{ \sigma_m^2 \}.
\]

Note that for any $n \times n$ Hankel matrix $A$ with symbol $\phi(z)=\sum_{k=0}^\infty c_k z^{k+1},$ if $B$ is the infinite Hankel operator with symbol $\phi_n(z) = \sum_{k=0}^{2n} c_k z^{k+1},$ then $\|A\| \leq \|B\|.$We arrive at the following corollary:
\begin{theorem}\label{thm:hankel}
  There is an absolute constant $C>0$ so that
  if $A$ is an $n \times n$ Hankel matrix with symbol $G$ (i.e.\ a centered complex Gaussian Hankel matrix), with $L$ the smallest integer greater than or equal to $\log_2(2n),$
  \[
    \Exp \|A\|^2
    \leq
    C
    \sum_{k=0}^{L}
    \sup_{k \leq m \leq L}
    \sigma_m^2.
  \]
\end{theorem}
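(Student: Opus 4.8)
The plan is to reduce the finite Hankel matrix $A$ to an infinite Hankel operator by truncating its symbol, apply the Holland--Walsh inequality \eqref{eq:HW}, and then feed the truncated symbol --- which is again a GAF, but now with only finitely many nonzero dyadic blocks --- into the bound $\Exp\|\phi\|_*^2\le C\sum_{k\ge1}\sup_{m\ge k}\{\sigma_m^2\}$ recorded just before the statement.

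First I would fix notation: write $c_m=a_{m+1}\xi_{m+1}$, so that the symbol of $A$ is exactly the GAF $G$, and set $\phi_n(z)=\sum_{k=0}^{2n}c_kz^{k+1}$, the polynomial obtained by keeping only the modes that actually enter the matrix. Let $B$ be the (finite rank, hence bounded) Hankel operator on $\ell^2$ with symbol $\phi_n$. Since the entries $A_{ij}=c_{i+j-2}$ with $1\le i,j\le n$ involve only $c_0,\dots,c_{2n-2}$, the matrix $A$ is precisely the top-left $n\times n$ compression $P_nBP_n$ of $B$, so $\|A\|\le\|B\|$; this is the observation stated immediately before Theorem~\ref{thm:hankel}. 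Applying \eqref{eq:HW} to $B$ gives $\|B\|\le M\|\phi_n\|_*$, and therefore $\Exp\|A\|^2\le M^2\,\Exp\|\phi_n\|_*^2$ (all quantities being finite, as $\phi_n$ is a polynomial).

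Next I would identify $\phi_n$ as a GAF whose coefficient sequence equals $\{a_m\}$ for $1\le m\le 2n+1$ and vanishes afterwards. If $\tilde\sigma_k^2$ denotes its dyadic block variances, then $\tilde\sigma_k\le\sigma_k$ for every $k$, and $\tilde\sigma_k=0$ as soon as $2^k>2n+1$. Because $L$ is the least integer with $L\ge\log_2(2n)$, one has $2^{L+1}\ge 4n>2n+1$ for $n\ge1$, so $\tilde\sigma_k=0$ for all $k\ge L+1$. Applying the displayed bound $\Exp\|\phi\|_*^2\le C\sum_{k\ge1}\sup_{m\ge k}\{\sigma_m^2\}$ to $\phi_n$, the inner suprema are effectively over $k\le m\le L$ and vanish for $k>L$; bounding $\tilde\sigma_m\le\sigma_m$ yields
\[
\Exp\|\phi_n\|_*^2\ \le\ C\sum_{k=1}^{L}\sup_{k\le m\le L}\sigma_m^2\ \le\ C\sum_{k=0}^{L}\sup_{k\le m\le L}\sigma_m^2,
\]
and combining this with $\Exp\|A\|^2\le M^2\,\Exp\|\phi_n\|_*^2$ gives Theorem~\ref{thm:hankel} with absolute constant $CM^2$.

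I do not expect a serious obstacle here: once \eqref{eq:HW}, the compression inequality $\|A\|\le\|B\|$, and the $\Exp\|\phi\|_*^2$ bound are in hand, the argument is essentially bookkeeping. The two points that genuinely need care are (i) verifying that passing from $G$ to its truncation $\phi_n$ does not increase any dyadic variance, so that the absolute constant $C$ carries over verbatim, and (ii) the index arithmetic ensuring $\tilde\sigma_k$ is supported on $\{0,1,\dots,L\}$ --- this is where the hypothesis $L\ge\log_2(2n)$ (rather than $\log_2 n$) is used, and where one should note the harmless restriction $n\ge1$.
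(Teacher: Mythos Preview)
Your proposal is correct and follows exactly the route the paper takes: compress the finite Hankel matrix into the infinite Hankel operator $B$ with truncated symbol $\phi_n$, apply the Holland--Walsh inequality \eqref{eq:HW}, and then invoke the bound $\Exp\|\phi\|_*^2\le C\sum_{k\ge1}\sup_{m\ge k}\sigma_m^2$ for the truncated GAF, whose dyadic variances vanish beyond index $L$. The paper states the theorem as an immediate corollary of these ingredients without writing out the index bookkeeping you supply, but the argument is the same.
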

\noindent We emphasize that by virtue of \eqref{eq:HW} the problem of estimating the norm of a random Gaussian Hankel matrix is essentially equivalent to the problem of estimating the $\starnorm$ norm of a GAF.

This is particularly relevant as random Hankel and Toeplitz matrices\footnote{A Toeplitz matrix $A$ has the form $A_{ij} = w_{i-j}$ for some $(w_k)_{-\infty}^{\infty}.$  The symbol for such a matrix is again $\sum w_k z^k.$  
By reordering the rows, it can be seen that a Toeplitz matrix with symbol $\sum_{-n}^n w_kz^k$ has the same norm as the Hankel matrix with symbol $\sum_{0}^{2n} w_{k-n} z^k.$} have appeared many times in the literature and have numerous applications to various statistical problems.  See the discussion in \cite{Bryc} for some details.  The particular case where $G$ is a \emph{real} Kac polynomial, so that the antidiagonals of $A$ are i.i.d.\ Gaussian, is particularly well studied.  In that case, \cite{Meckes}, \cite{Adamczak} and \cite{Nekrutkin} give proofs that $\Exp \|A\| \leq c\sqrt{n \log n}$ (finer results for the symmetric Toeplitz case are available in \cite{Sen}).  Note that Theorem \ref{thm:hankel} gives the same order growth for an $n \times n$ Hankel matrix with independent standard \emph{complex} normal antidiagonals.  

Furthermore, \cite{Meckes} gives a matching lower bound, and his method can be applied to show that (deterministically) 
\[
  \|A\| \geq \sup_{|z|=1} 
  \biggl|
  \sum_{j=0}^{2(n-1)} (1-\tfrac{|n-1-j|}{n})a_j \xi_j z^j
  \biggr|.
\]
Fernique's theorem \cite[Chapter 15, Theorem 5]{Kahane} can then be used to show that Theorem \ref{thm:hankel} is sharp up to multiplicative numerical constant, at least when $a_j = j^{-\alpha}$ for $\alpha \in \R.$

\subsection*{Organization}

In Section \ref{sec:preliminaries}, we give some background theory for working with GAFs and random series.
In Section \ref{sec:sledd}, we give some further properties of the space $\Sledd$ and we give some equivalent characterizations for $G \in \Sledd.$  We also prove Theorem \ref{thm:sleddVMOA}.
In Section \ref{sec:sufficiency}, we give a sufficient condition for $G$ to be in $\Sledd$; in particular we prove Theorem \ref{thm:bettersledd}.
Finally in Section \ref{sec:exceptional}, we construct exceptional GAFs, and we show the inclusions in \eqref{eq:strict} are strict.

\subsection*{Notation}

We use the expression \emph{numerical constant} and \emph{absolute constant} to refer to fixed real numbers without dependence on any parameters.
We make use of the notation $\lesssim$ and $\gtrsim$ and $\asymp.$  In particular we say that $f(a,b,c,\dots) \lesssim g(a,b,c,\dots)$ if there is an absolute constant $C>0$ so that $f(a,b,c,\dots) \leq Cg(a,b,c,\dots)$ for all $a,b,c,\dots.$  We use $f \asymp g$ to mean $f \lesssim g$ and $f \gtrsim g.$

\subsection*{Acknowledgements}

We are very thankful to Gady Kozma, to whom the proof of Lemma \ref{lem:gady} is due. Most of the work was performed during mutual visits of the authors, which were supported by a grant from the United States - Israel Binational Science Foundation (BSF Start up Grant no. 2018341). A. N. was supported in part by ISF Grant 1903/18.

\section{Preliminaries}
\label{sec:preliminaries}

Some of our proofs will rely on the so--called contraction principle.
\begin{proposition}[Contraction Principle]
  \label{prop:contraction}
  For any finite sequence $\left( x_i \right)$ in a topological vector space $V,$ any continuous convex $F : V \to [0,\infty]$, any i.i.d., symmetrically distributed random variables $\left( \epsilon_i \right)$ and any $\left( \alpha_i \right)$ real numbers in $[-1,1]$:
  \begin{enumerate}[\textnormal{(}i\textnormal{)}]
    \item
      \(
      \Exp F \left( \sum_{i} \alpha_i \epsilon_i x_i \right)
      \leq
      \Exp F \left( \sum_{i} \epsilon_i x_i \right).
      \)
    \item If $F$ is a seminorm, then for all $t > 0,$
      \(
      \Pr\left[  F \left( \sum_{i} \alpha_i \epsilon_i x_i \right) \geq t \right]
      \leq 2\Pr\left[  F \left( \sum_{i} \epsilon_i x_i \right) \geq t \right].
      \)
  \end{enumerate}
\end{proposition}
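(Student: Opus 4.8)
The plan is to treat the two parts separately: part (i) is a direct convexity computation, and part (ii) reduces, after a rearrangement trick, to Lévy's maximal inequality. For part (i), I would introduce $g(\alpha) \Def \Exp F(\sum_i \alpha_i \epsilon_i x_i)$ for $\alpha$ ranging over the cube $[-1,1]^n$, with values in $[0,\infty]$. For each fixed outcome of $(\epsilon_i)$ the map $\alpha \mapsto \sum_i \alpha_i \epsilon_i x_i$ is linear, so by convexity of $F$ the map $\alpha \mapsto F(\sum_i \alpha_i \epsilon_i x_i)$ is convex; taking expectations, $g$ is convex as an average of convex functions. Writing $\alpha$ as a convex combination $\sum_{\sigma \in \{-1,1\}^n} \lambda_\sigma\, \sigma$ of the vertices of the cube then gives $g(\alpha) \le \sum_\sigma \lambda_\sigma g(\sigma) \le \max_\sigma g(\sigma)$. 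Finally, since the $\epsilon_i$ are independent and symmetric, for every vertex $\sigma$ the sequence $(\sigma_i \epsilon_i)_i$ has the same joint law as $(\epsilon_i)_i$, so $g(\sigma) = \Exp F(\sum_i \epsilon_i x_i)$ for all $\sigma$; hence $g(\alpha) \le \Exp F(\sum_i \epsilon_i x_i)$, which is (i).

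For part (ii), with $F$ now a (continuous) seminorm, I would first reduce to $\alpha_i \ge 0$ for all $i$: replacing $x_i$ by $\sgn(\alpha_i) x_i$ changes neither side, since $(\sgn(\alpha_i)\epsilon_i)_i \lawequals (\epsilon_i)_i$. Since the $\epsilon_i$ are i.i.d.\ I may also reindex so that $\alpha_1 \ge \alpha_2 \ge \cdots \ge \alpha_n \ge 0$; set $\alpha_{n+1} \Def 0$ and $S_k \Def \sum_{j=1}^k \epsilon_j x_j$. Abel summation gives $\sum_i \alpha_i \epsilon_i x_i = \sum_{k=1}^n (\alpha_k - \alpha_{k+1}) S_k$, which displays the left-hand side as a nonnegative combination of the partial sums $S_1,\dots,S_n$ of total weight $\alpha_1 \le 1$. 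By the triangle inequality for $F$, $F(\sum_i \alpha_i \epsilon_i x_i) \le \max_{1 \le k \le n} F(S_k)$, so the event $\{F(\sum_i \alpha_i \epsilon_i x_i) \ge t\}$ is contained in $\{\max_k F(S_k) \ge t\}$. Now Lévy's maximal inequality for the symmetric independent sequence $(\epsilon_j x_j)_j$ — namely $\Pr(\max_k F(S_k) \ge t) \le 2\,\Pr(F(S_n) \ge t)$, which one proves by reflecting the increments after the first index $k$ with $F(S_k) \ge t$ and using symmetry together with the triangle inequality $F(a)+F(b) \ge F(a+b)$ — yields the claim, since $S_n = \sum_i \epsilon_i x_i$.

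Part (i) is routine. The delicate point in part (ii) is getting the constant exactly $2$: a direct appeal to the vertex decomposition used in part (i) would cost a factor equal to the number of vertices involved, which depends on $n$ even after invoking Carathéodory. What saves the constant is that the monotone reindexing together with Abel summation re-expresses the weighted sum as a convex combination of the $n$ ordered \emph{partial} sums rather than of arbitrary $\pm1$ combinations, and this is exactly the configuration that Lévy's inequality controls with constant $2$. So the step I would be most careful about is the reduction chain — absorbing the signs, sorting the $\alpha_i$, then Abel summation — that sets up the application of Lévy's inequality; the rest is standard.
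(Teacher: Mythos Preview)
Your proposal is correct and follows essentially the same route as the paper: for (i), convexity of $\alpha \mapsto \Exp F(\sum_i \alpha_i \epsilon_i x_i)$ reduces the bound to the vertices of the cube, where symmetry of the $\epsilon_i$ gives equality; for (ii), absorb signs, sort the $\alpha_i$, apply Abel summation to bound by $\max_k F(S_k)$, and finish with the reflection principle (L\'evy's inequality). The only cosmetic difference is that you spell out the sign-absorption and the proof sketch of L\'evy's inequality more explicitly than the paper does.
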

This is essentially \cite[Theorem 4.4]{LedouxTalagrand}, although we have changed the formulation slightly.  For convenenience we sketch the proof.
\begin{proof}
  The mapping
  \[
    (\alpha_1, \alpha_2, \dots, \alpha_N) \mapsto \Exp F \left( \sum_{i} \alpha_i \epsilon_i x_i \right)
  \]
  is convex.  Therefore it attains its maximum on $[-1,1]^N$ at an extreme point, i.e.\ an element of $\left\{ \pm 1 \right\}^N.$  By the symmetry of the distributions, for all such extreme points, the value of the expectation is
  \(
      \Exp F \left( \sum_{i} \epsilon_i x_i \right),
  \)
  which completes the proof of the first part.

  For the second part, we may without loss of generality assume that $\alpha_1 \geq \alpha_2 \geq \dots \geq \alpha_N \geq \alpha_{N+1} = 0$ by relabeling the variables and using the symmetry of the distributions of $\left\{ \epsilon_i \right\}.$  Letting $S_n =  \sum_{k=1}^n \epsilon_i x_i$ for any $1 \leq n \leq N,$ we can use summation by parts to express
  \[
    \sum_{i} \alpha_i \epsilon_i x_i
    = \sum_{i} \alpha_i (S_{i} - S_{i-1})
    = \sum_{i} (\alpha_i - \alpha_{i+1})S_i.
  \]
  Hence as $F$ is a seminorm
  \[
    F( 
    \sum_{i} \alpha_i \epsilon_i x_i
    )
    \leq \alpha_1 \max_{1 \leq i \leq N} F(S_i)
    \leq \max_{1 \leq i \leq N} F(S_i).
  \]
  Using the reflection principle, it now follows that for any $t \geq 0$
  \[
    \Pr\left[ 
     \max_{1 \leq i \leq N} F(S_i) \geq t
    \right]
    \leq 2 
    \Pr\left[ 
     F(S_N) \geq t
    \right],
  \]
  which completes the proof (see \cite[Theorem 4.4]{LedouxTalagrand} for details).
\end{proof}

We also need the following standard Gaussian concentration inequality.
\begin{proposition}\label{prop:glip}
  Suppose that $X=\left( X_j \right)_1^n$ are i.i.d.\ standard complex Gaussian variables, and suppose $F : \C^n \to \R$ is a $1$--Lipschitz function with respect to the Euclidean metric.  Then $\Exp |X| < \infty$ and for all $t \geq 0,$ 
  \[
    \Pr\left[ X - \Exp X > t \right] \leq e^{-t^2}.
  \]
\end{proposition}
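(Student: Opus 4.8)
The plan is to reduce the complex-Gaussian statement to the classical concentration inequality for a \emph{real} Gaussian vector by a rescaling. Write $X_j = U_j + \mathrm{i} V_j$. Since each $X_j$ has density $\pi^{-1}e^{-|z|^2}$, the real $2n$-tuple $(U_1,V_1,\dots,U_n,V_n)$ is a centered Gaussian vector in $\R^{2n}$ whose coordinates are i.i.d.\ $\Normal(0,\tfrac12)$; equivalently it is distributed as $\sqrt{\tfrac12}\,Z$ with $Z$ a standard Gaussian vector in $\R^{2n}$. Under the usual identification $\bbc^n\cong\R^{2n}$ this is an isometry for the Euclidean metrics, so $F$, regarded as a function on $\R^{2n}$, remains $1$--Lipschitz, and $F(X)$ has the law of $F(\sqrt{\tfrac12}\,Z)$.

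First I would dispatch the integrability claim (where $\Exp|X|<\infty$ in the statement should be read as $\Exp|F(X)|<\infty$, so that $\Exp F(X)$ is meaningful). A $1$--Lipschitz $F$ satisfies $|F(x)|\le |F(0)| + |x|$, and $\Exp|X|\le(\Exp|X|^2)^{1/2}=\sqrt{n}$ because $\Exp|X_j|^2 = \frac1\pi\int_{\bbc}|z|^2 e^{-|z|^2}\,\dd z = 1$. Hence $\Exp|F(X)|\le |F(0)|+\sqrt{n}<\infty$.

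Next I would invoke the classical Gaussian concentration inequality: if $W$ is a standard Gaussian vector in $\R^d$ and $g:\R^d\to\R$ is $L$--Lipschitz, then $\Pr[\,g(W) - \Exp g(W) > t\,] \le \exp\bigl(-t^2/(2L^2)\bigr)$ for all $t\ge 0$ (see e.g.\ \cite{LedouxTalagrand}). Apply this to $g(z) = F(\sqrt{\tfrac12}\,z)$, which is Lipschitz with constant $L = \sqrt{\tfrac12}$; since $g(Z)$ has the same law as $F(X)$, this yields $\Pr[\,F(X) - \Exp F(X) > t\,] \le \exp\bigl(-t^2/(2\cdot\tfrac12)\bigr) = e^{-t^2}$, which is exactly the asserted bound.

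There is no genuine obstacle here; the only point requiring care is the variance normalization. The density $\pi^{-1}e^{-|z|^2}$ forces each real and imaginary part to have variance $\tfrac12$ rather than $1$, and it is precisely this factor of $\tfrac12$ that upgrades the generic exponent $-t^2/2$ to the stated $-t^2$. An alternative self-contained route would be to run the usual semigroup/interpolation proof of Gaussian concentration (or combine the Gaussian log-Sobolev inequality with Herbst's argument) directly for the covariance $\tfrac12 I_{2n}$, but citing the real case and rescaling is the cleanest.
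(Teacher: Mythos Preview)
Your proof is correct and follows exactly the same approach as the paper: reduce to the real Gaussian concentration inequality (cited in \cite{LedouxTalagrand}) via the identification $\bbc^n\cong\R^{2n}$, and observe that the real and imaginary parts have variance $\tfrac12$, which is precisely what converts the usual $e^{-t^2/2}$ into $e^{-t^2}$. The paper's proof is a two-sentence sketch of this; you have simply written out the details (including the integrability check and the rescaling $g(z)=F(\sqrt{\tfrac12}\,z)$) explicitly.
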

\begin{proof}
  This follows from the real case (see \cite[(1.5)]{LedouxTalagrand}).  The real and imaginary Gaussians have variance $1/2,$ for which reason the exponent is $e^{-t^2}.$
\end{proof}

\subsection*{Approximation of seminorms}

Say that a seminorm $\|\cdot \|$ on $\text{H}^2$ is \emph{approximable} if there exists a sequence of polynomials $\left\{ p_n \right\}$ with $\sup_{n,j} \| z^j \star p_n(z) \| \leq 1$
such that for all $F \in \text{H}^2,$ 
\begin{equation}\label{eq:mallorca}
  \sup_n \| F \star p_n \| < \infty  \iff  \|F \| < \infty
  \quad\text{and}\quad
  \sup_n \| F \star p_n \| = 0 \iff \|F \| =0.
\end{equation}
Let $V$ be the quotient space of $\left\{ F \in \text{H}^2 : \|F\| < \infty \right\}$ by the space $\left\{ F \in \text{H}^2 : \|F\| = 0 \right\}.$  Then both $\|\cdot\|$ and $\sup_n \| \cdot \star p_n \|$ make $V$ into Banach spaces with equivalent topologies, by the hypotheses.  Hence \eqref{eq:mallorca} is equivalent to:
\begin{equation}
  \exists~
  \text{$C > 0$ such that} 
  \quad
  \frac{1}{C}\sup_n \| F \star p_n \| \leq \|F\| \leq {C}\sup_n \| F \star p_n \|
  \quad
  \forall~%
  F \in \text{H}^2,
  \label{eq:minorca}
\end{equation}
as the inclusion map from one of these Banach spaces to the other is continuous and hence bounded.

We say that $G$ is an $H^2$--GAF if $\{a_k\} \in \ell^2$.

\begin{proposition}
  \label{prop:approximable}
  Let $F$ be an $\text{H}^2$--GAF.  Let $\|\cdot\|$ be any approximable seminorm on $\text{H}^2.$
  The following are equivalent:
  \begin{enumerate}[\textnormal{(}i\textnormal{)}]
    \item\label{it:Finas} $\|F\| < \infty$ a.s.
    \item\label{it:F1fin} $\Exp \|F\| < \infty.$
    \item\label{it:F2fin} $\Exp \|F\|^2 < \infty.$
    \end{enumerate}
\end{proposition}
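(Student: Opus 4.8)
The plan is to prove the cycle of implications $(iii) \Rightarrow (ii) \Rightarrow (i) \Rightarrow (iii)$, exploiting the characterization \eqref{eq:minorca}: an approximable seminorm $\|\cdot\|$ is, up to multiplicative constants, equal to $N(F) \defeq \sup_n \|F \star p_n\|$, and each functional $F \mapsto \|F \star p_n\|$ is a \emph{continuous} seminorm on the (separable) Hilbert space $\text{H}^2$. The implications $(iii)\Rightarrow(ii)$ is trivial by Jensen/Cauchy--Schwarz ($\Exp\|F\| \le (\Exp\|F\|^2)^{1/2}$), and $(ii)\Rightarrow(i)$ is immediate since an integrable nonnegative random variable is finite almost surely. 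So the entire content is in the implication $(i) \Rightarrow (iii)$: if $\|F\| < \infty$ a.s., then in fact $\Exp\|F\|^2 < \infty$.

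For $(i)\Rightarrow(iii)$, first I would reduce to $N$: since $\|F\|<\infty$ a.s.\ iff $N(F)<\infty$ a.s.\ (by the hypothesis \eqref{eq:mallorca}) and $\|F\|^2 \asymp N(F)^2$ pointwise by \eqref{eq:minorca}, it suffices to show $N(F) < \infty$ a.s.\ implies $\Exp N(F)^2 < \infty$. Now I want to set this up as a statement about a Gaussian random variable valued in a Banach space, or more concretely, apply a zero--one law plus integrability of Gaussian suprema. Write $F(z) = \sum_k a_k \xi_k z^k$ with $\{a_k\}\in\ell^2$. Truncating, let $F_m$ be the partial sum up to degree $m$; then $N(F_m)$ is a measurable seminorm of the finite Gaussian vector $(\xi_0,\dots,\xi_m)$, hence a Lipschitz function of it (with Lipschitz constant controlled, via the normalization $\sup_{n,j}\|z^j \star p_n\| \le 1$, by something like $(\sum_{k\le m} a_k^2)^{1/2} \le \|\{a_k\}\|_{\ell^2}$ — this is exactly where that normalization is used). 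By Proposition 2.5 (Gaussian concentration), $N(F_m)$ has sub-Gaussian tails around its mean with a variance proxy \emph{uniform in $m$}. The key monotone/limiting step: $N(F_m) \to N(F)$ (or at least $\limsup$ behaves well) — here one must be a little careful since $N$ is a sup over $n$ of $\|\cdot \star p_n\|$, but each $\|F_m \star p_n\| \to \|F \star p_n\|$ by continuity of that seminorm on $\text{H}^2$ and $F_m \to F$ in $\text{H}^2$, so $\liminf_m N(F_m) \ge \|F\star p_n\|$ for each $n$, giving $\liminf_m N(F_m) \ge N(F)$; combined with a Fatou argument this pins down the behavior.

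The main obstacle — and the step to treat with care — is controlling $\Exp N(F_m)$ uniformly in $m$, given only the \emph{qualitative} hypothesis $N(F)<\infty$ a.s. The route I would take: a $0$--$1$ law (Kolmogorov, applied to the tail $\sigma$-algebra of $\{\xi_k\}$, using that $\{N(F)<\infty\}$ is a tail event since modifying finitely many $\xi_k$ changes $F$ by a polynomial, which has finite $\|\cdot\|$) shows $\Pr[N(F)<\infty]\in\{0,1\}$, so by hypothesis it is $1$; then pick $t_0$ with $\Pr[N(F)\le t_0] \ge 3/4$, and since $N(F_m)\le N(F)$ would be convenient but is not automatic, instead use $\liminf_m N(F_m)\ge N(F)$ together with the concentration inequality applied to each $N(F_m)$: if $\Exp N(F_m)$ were large, the sub-Gaussian lower tail would force $\Pr[N(F_m) \le t_0]$ small for large $m$, yet $\Pr[N(F_m)\le t_0]$ cannot all be small because $N(F_m)$ cannot exceed $N(F)$ on the event $\{N(F)\le t_0\}$... the cleanest fix is to note $\|F_m\star p_n\| = \|(F\star p_n)$ truncated$\|$ is \emph{not} bounded by $\|F\star p_n\|$ in general, so one instead runs the concentration argument on $N(F)$ itself after establishing $\Exp N(F) < \infty$ via Fernique's theorem or the Gaussian isoperimetric bound for suprema of Gaussian processes (\cite[Chapter 15]{Kahane}, or \cite[(1.5)]{LedouxTalagrand}): a.s.\ finiteness of the supremum of a Gaussian process (here $N(F) = \sup_n \|F\star p_n\|$, and $\|F\star p_n\|$ is itself a sup of linear Gaussian functionals) implies finiteness of its exponential moments, in particular $\Exp N(F)^2 < \infty$. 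I would present the argument in this last form: $(i) \Rightarrow \Exp N(F)<\infty$ and $\Exp N(F)^2<\infty$ directly from the integrability theorem for a.s.-finite Gaussian suprema, then transfer back to $\|\cdot\|$ via \eqref{eq:minorca}.
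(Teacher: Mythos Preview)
Your final approach—representing $N(F)=\sup_n\|F\star p_n\|$ as a (separable) supremum of centered real Gaussians and invoking the integrability theorem for a.s.\ finite Gaussian suprema—is correct; the uniform variance bound needed for Borell--TIS follows from exactly the Lipschitz estimate you describe.

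The paper, however, proceeds differently and more elementarily (cf.\ the remark immediately following the proposition, where the authors explain they wish to avoid Fernique because the ambient spaces are non-separable). Rather than looking at $N$ of a Taylor truncation, the paper applies the finite-dimensional concentration inequality (Proposition~\ref{prop:glip}) to each individual $\|F\star p_m\|$: this depends on only finitely many $\xi_k$ and is $\|a\|_{\ell^2}$-Lipschitz by the normalization $\|z^j\star p_m\|\le 1$, so each $\|F\star p_m\|$ has sub-Gaussian tails around its mean with a variance proxy uniform in $m$. The step that dissolves the limiting difficulty you ran into is the observation that hypothesis~(i) is, by the very definition of approximability \eqref{eq:mallorca}, \emph{equivalent} to $\sup_m\|F\star p_m\|<\infty$ a.s.; one then picks $M$ with $\Pr[\sup_m\|F\star p_m\|>M]<\tfrac12$, which immediately gives $\operatorname{med}(\|F\star p_m\|)\le M$ for every $m$ at once, with no limit to take. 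Concentration converts this uniform median bound to a uniform mean bound, and \eqref{eq:minorca} transfers it back to $\Exp\|F\|$; a final application of concentration bounds $\Var(\|F\|)$. Your route is shorter once one is willing to cite the Borell--TIS machinery; the paper's is self-contained (using only Proposition~\ref{prop:glip}) and makes transparent why the approximability hypothesis is formulated the way it is.
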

\begin{remark}
  We remark that these equivalences hold in great generality for a Gaussian measure in a \emph{separable} Banach space, due to a theorem of Fernique \cite[Theorem 4.1]{Ledoux}.  As the spaces $\BMOA$ and $\Bloch$ are not separable, we instead will appeal to this notion of approximable.
\end{remark}
\begin{remark}
A priori it is not clear that a seminorm being finite is a measurable event with respect to the product $\sigma$--algebra generated by the Taylor coefficients of $G$. However, for an approximable seminorm, measurability is implied by the equivalence in \eqref{eq:mallorca}, since $ \sup_n \| F \star p_n \|$ is clearly measurable (c.f. \cite[Chapter 5, Proposition 1]{Kahane}).
\end{remark}
\begin{proof}
  The implications \eqref{it:F2fin} $\implies$ \eqref{it:F1fin} $\implies$ \eqref{it:Finas} are trivial, and so it only remains to show that \eqref{it:Finas} $\implies$ \eqref{it:F2fin}.

  Let $\left\{ p_m \right\}$ be the polynomials making $\|\cdot\|$ approximable. 
  Define
  \[
    F(z) = \sum_{k=0}^\infty a_k \xi_k z^k
    \quad
    \text{and}
    \quad
    F_m = F \star p_m.
  \]
  Without loss of generality, we may assume that $\| a\|_{\ell_2}^2 = \sum_{k=0}^\infty a_k^2 = 1.$
  For any $m \in \N,$ let $k_m = \deg(p_m),$ and define the function on $\C^{k_m}$
  \[
    G_m(x) = G_m(x_0,x_2, \dots, x_{k_m}) = \| \textstyle (\sum_{j=0}^{k_m} a_j x_j z^j) \star p_m(z) \|.
  \]
  Then for any complex vectors $x=\left( x_j \right)_0^{k_m}$ and $y=\left( y_j \right)_0^{k_m},$ by changing coordinates one at a time and by using that $\sup_{n,j} \| z^j \star p_n(z) \| \leq 1,$
  \[
    |G_m(x) - G_m(y)| 
    \leq \sum_{j=0}^{k_m} a_j |x_j - y_j|
    \leq \| a \|_{\ell_2} \| x -y \|_{\ell_2}.
  \]
  Therefore by Proposition \ref{prop:glip}, we have that for all $t \geq 0$ and all $m \in \N,$
  \begin{equation}\label{eq:seminormconcentration}
    \Pr\left[ 
    | \|F_m\| - \Exp \|F_m\| | \geq t
  \right]\leq 2e^{-t^2}.
\end{equation}
  Hence there is an absolute constant $C>0$ so that
  \begin{equation}\label{eq:clubmed}
    |\text{med}( \|F_m\|) - \Exp \|F_m\|| \leq C,
  \end{equation}
  where $\text{med}(X)$ denotes the median of a random variable $X.$

  Suppose that $\|F\| < \infty$ a.s.  By \eqref{eq:mallorca}, $\sup_m \|F_m\| < \infty$ a.s.
  Therefore there is a constant $M > 0$ such that $\Pr\left( \sup_m \|F_m\| > M \right) < \tfrac 12.$  Then
  by \eqref{eq:clubmed}
  \[
    \sup_m \Exp \|F_m\| \leq 
    \sup_m \text{med}( \|F_m\|) + C
    \leq 
    \text{med}(\sup_m  \|F_m\|) + C
    \leq M + C.
  \]
  Using \eqref{eq:minorca}, there is another absolute constant $C'$ so that
  \[
    \Exp \| F\| \leq
    C'\sup_m \Exp \|F_m\| \leq C'(M+C) < \infty.
  \]
  Using \eqref{eq:minorca} and \eqref{eq:seminormconcentration}, there is an absolute constant $C>0$
  \(
    \Var( \|F\|) \leq C, 
  \)
  and therefore 
  \[
    \Exp \| F\|^2 =  \Var( \|F\|) + (\Exp \|F\|)^2 < \infty.
  \]
\end{proof}

By Theorems 1 and 4 of \cite{Holland}, both $\starnorm$ and $\Blochnorm$ are approximable
  with $\left\{ p_n \right\}$ given by the analytic part of the Fej\'er kernel\footnote{In fact, the remark following Theorem 4 of \cite{Holland} shows that $\sup_n \starnorm[{K^A_n * f}] = \starnorm[f],$ which among other things can be used to establish the Borel measurability of $\starnorm.$}
\[
 K^A_n(z) = \sum_{k=0}^n (1-\tfrac{k}{n+1})z^k.
\]
\begin{corollary}\label{cor:expisenough}
  Let $F$ be an $\text{H}^2$--GAF. Then $\starnorm[F] < \infty$ a.s.\ if and only if $\Exp\starnorm[F] < \infty$ and 
  $\Blochnorm[F] < \infty$ a.s.\ if and only if $\Exp\Blochnorm[F] < \infty.$
\end{corollary}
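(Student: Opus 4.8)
The plan is to read off Corollary \ref{cor:expisenough} directly from Proposition \ref{prop:approximable}. That proposition asserts, for every approximable seminorm $\|\cdot\|$ on $\text{H}^2$ and every $\text{H}^2$--GAF $F$, that ``$\|F\|<\infty$ a.s.'', ``$\Exp\|F\|<\infty$'', and ``$\Exp\|F\|^2<\infty$'' are all equivalent; in particular the first two are equivalent. So the only thing left to do is to verify that both $\starnorm$ and $\Blochnorm$ are approximable seminorms on $\text{H}^2$, and then to apply Proposition \ref{prop:approximable} once with $\|\cdot\|=\starnorm$ and once with $\|\cdot\|=\Blochnorm$.

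To check approximability, I would take $p_n$ to be the analytic Fej\'er kernel $K^A_n(z)=\sum_{k=0}^n(1-\tfrac{k}{n+1})z^k$. Two things are needed: the normalization $\sup_{n,j}\|z^j\star p_n\|\le 1$ and the two--sided equivalence \eqref{eq:mallorca} between $\sup_n\|F\star p_n\|$ and $\|F\|$, for both finiteness and vanishing. Since convolution on the circle acts diagonally on Taylor coefficients, $z^j\star K^A_n=(1-\tfrac{j}{n+1})_+\,z^j$ is a scalar multiple of $z^j$ with scalar in $[0,1]$, so the normalization reduces to boundedness of $\{\|z^j\|\}_j$ for the seminorm in question (and a finite bound there, rather than exactly $1$, is harmless: it only changes the Lipschitz constant in the proof of Proposition \ref{prop:approximable} by a constant). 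The equivalence \eqref{eq:mallorca} is exactly what the Holland--Walsh theorem supplies --- the remark after Theorem~4 of \cite{Holland} gives $\sup_n\starnorm[K^A_n\star f]=\starnorm[f]$, which is stronger than \eqref{eq:mallorca}, and Theorem~1 of \cite{Holland} gives the corresponding identity for $\Blochnorm$. This is precisely the assertion of the sentence preceding the corollary, so both $\starnorm$ and $\Blochnorm$ are approximable.

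Putting these together gives the two claimed equivalences. I do not expect any real obstacle here: all of the probabilistic content (the Gaussian Lipschitz concentration of Proposition \ref{prop:glip}, the median-versus-mean comparison, and the deduction of an $L^2$ bound from almost sure finiteness of the approximants) has already been carried out inside the proof of Proposition \ref{prop:approximable}, while all of the complex--analytic content (that the analytic Fej\'er means recover the $\starnorm$ and $\Blochnorm$ norms) is imported from \cite{Holland}. The corollary is just the conjunction of these two inputs, specialized to the two seminorms of interest.
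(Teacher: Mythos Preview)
Your proposal is correct and matches the paper's approach exactly: the paper derives the corollary immediately from Proposition~\ref{prop:approximable} after noting (in the sentence preceding the corollary) that Theorems~1 and~4 of \cite{Holland} show $\starnorm$ and $\Blochnorm$ are approximable via the analytic Fej\'er kernels $K^A_n$. Your additional remarks on the normalization $\sup_{n,j}\|z^j\star p_n\|$ and on how \eqref{eq:mallorca} follows from the Holland--Walsh identities simply make explicit what the paper leaves implicit.
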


We also have that the probability that a GAF is in $\BMOA,$ $\VMOA,$ or $\Bloch$ is either $0$ or $1.$

\begin{proposition}\label{prop:01}
	For any $\text{H}^2$--GAF $G$, 
	the events $\{ G \in \BMOA\}, \{ G \in \VMOA\}, \{ G \in \Bloch\}$ all have probability $0$ or $1$.
\end{proposition}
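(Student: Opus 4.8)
The plan is to invoke the Kolmogorov zero–one law. The GAF $G(z) = \sum_{k=0}^\infty a_k \xi_k z^k$ is a function of the independent sequence $(\xi_k)_{k \ge 0}$, so it suffices to check that each of the three events is measurable with respect to the tail $\sigma$-algebra $\mathcal{T} = \bigcap_{N} \sigma(\xi_N, \xi_{N+1}, \dots)$. First I would record measurability of the events themselves: by the remark following Proposition \ref{prop:approximable} (and the discussion of the Fej\'er kernel approximation), the seminorms $\starnorm$ and $\Blochnorm$ are approximable, hence Borel measurable as functions of the Taylor coefficients; so $\{G \in \BMOA\} = \{\starnorm[G] < \infty\}$ and $\{G \in \Bloch\} = \{\Blochnorm[G] < \infty\}$ lie in the product $\sigma$-algebra, and $\{G \in \VMOA\}$ is measurable as well since $\VMOA$ can be described through a countable family of conditions $\sup_{|I| \le 1/n} M_I(G)$, each a measurable functional (again via the approximation by $K^A_n \star G$).

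The core step is a tail-triviality argument. Fix $N \in \N$ and split $G = P_N + R_N$ where $P_N(z) = \sum_{k=0}^{N-1} a_k \xi_k z^k$ is a (random) polynomial and $R_N(z) = \sum_{k \ge N} a_k \xi_k z^k$. A polynomial is continuous on the closed disk, hence lies in $\VMOA \subseteq \BMOA \subseteq \Bloch$, and all three of $\starnorm$, $\Blochnorm$, and the $\VMOA$-defect functional are seminorms (or, in the $\VMOA$ case, behave subadditively on the relevant defect). Therefore $\starnorm[G] < \infty \iff \starnorm[R_N] < \infty$, and likewise for the Bloch norm, and $G \in \VMOA \iff R_N \in \VMOA$: adding a polynomial changes each quantity by a finite amount and does not affect the $|I| \to 0$ limit. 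Consequently each of the three events coincides, up to nothing, with the corresponding event for $R_N$, which is $\sigma(\xi_N, \xi_{N+1}, \dots)$-measurable. Since $N$ was arbitrary, each event lies in $\mathcal{T}$, and Kolmogorov's zero–one law gives probability $0$ or $1$.

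The only delicate point is handling $\VMOA$, since ``being in $\VMOA$'' is not literally the finiteness of a seminorm but the vanishing of a limit; I would make this precise by writing $G \in \VMOA \iff \starnorm[G] < \infty$ and $\lim_{\delta \to 0} \sup_{|I| \le \delta} M_I(G) = 0$, noting that $M_I(P_N) \to 0$ as $|I| \to 0$ for the fixed polynomial $P_N$ (uniform continuity of $P_N$ on $\T$), and that $M_I$ is subadditive, so $\limsup_{|I|\to 0} M_I(G) = \limsup_{|I|\to 0} M_I(R_N)$. This makes $\{G \in \VMOA\}$ agree with a $\sigma(\xi_N,\dots)$-event for every $N$, completing the argument. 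I do not expect any real obstacle here beyond bookkeeping; the measurability of the $\VMOA$-defect functional is the one place to be a little careful, and it follows from the approximability of $\starnorm$ together with the fact that $M_I(F)$ for a fixed interval $I$ is a continuous functional of the boundary values, which for an $\text{H}^2$-GAF depend measurably on $(\xi_k)$.
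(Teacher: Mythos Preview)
Your proposal is correct and follows essentially the same route as the paper: split $G$ into a Taylor polynomial $P_N$ and a tail $R_N$, observe that polynomials lie in $\VMOA\subseteq\BMOA\subseteq\Bloch$, deduce that each membership event is (up to null sets) determined by $R_N$ and hence tail-measurable, and apply Kolmogorov's zero--one law. Your treatment is a bit more explicit about measurability and about the $\VMOA$ case via subadditivity of $M_I$, but the argument is the same in substance.
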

\begin{proof}
	Decompose $G = G_{\leq n} + G_{>n},$ where $G_{\leq n}$ is the $n$--th Taylor polynomial of $G$ at $0.$  Then as $G_{\leq n}$ is a polynomial, $\starnorm[ G_{\leq n}] < \infty$ almost surely.
	Hence $\starnorm[ G ] < \infty$ if and only if $ \starnorm[ G_{> n} ] < \infty,$ up to null events.  Therefore, $\starnorm[ G ] < \infty$ differs from a tail event of $\left\{ \xi_n : 1\leq n < \infty \right\}$ by a null event, and so the statement follows from the Kolmogorov $0$-$1$ law.  The same proof shows that $\Pr\left[ G \in \Bloch \right] \in \left\{ 0,1 \right\}$.

	For VMOA, as $G_{\leq n}$ is a polynomial,
	\[
	\lim_{|I| \to 0} \sup_{I} M_I^1(G_{ \leq n}) = 0 \quad \As,
	\]
	and the same reasoning as above gives the $0$-$1$ law.
\end{proof}

\section{The Sledd space}
\label{sec:sledd}

Let $K_n$ for $n \in \N$ be the $n$--th Fej\'er kernel, which for $|z|=1$ is given by
\begin{equation}\label{eq:fejer}
  K_n(z) = \sum_{k=-n}^n (1-\tfrac{|k|}{n+1})z^k = \frac{1}{n+1}\cdot\frac{|1-z^{n+1}|^2}{|1-z|^2}.
\end{equation}
This kernel has the two familiar properties $\| K_n\|_1 = 1$ and $K_n(z) \leq \frac{4}{n+1}\cdot\frac{1}{|1-z|^2}.$

For a function $F : \T \to \C$ with a Laurent expansion on $\T,$ let $\widehat F : \Z \to \C$ be its Fourier coefficients, i.e.\ let $\widehat{F}(k)$ be the $k$--th coefficient of its Laurent expansion.

We let $T_n$ be the dyadic trapezoidal kernel
\begin{equation}
  \begin{aligned}
  &T_0(z) = 1 + \tfrac{1}{2}z+\tfrac{1}{2}z^{-1} \\
  &T_n = 2K_{2^{n+2}} - K_{2^{n+1}} + K_{2^{n-1}} - 2K_{2^n}, \quad n \geq 1.
\end{aligned}
  \label{eq:trapezoid}
\end{equation}
The kernel $T_n$ satisfies that $\widehat T_n$ is supported in $[2^{n-1}, 2^{n+2}),$ has $|\widehat T_n(K)| \leq 1$ everywhere, has $\widehat T_n(K) = 1$ for $K \in [2^{n}, 2^{n+1}]$ and satisfies
\[
  \sum_{n=0}^\infty \widehat T_n(K) = 1
\]
for all integers $K \geq 0.$  Further, $\|T_n\|_1 \leq 6$ for all $n \geq 0.$
Also
\begin{equation}
  |T_n(z)| \leq 20 \cdot 2^{-n} |1-z|^{-2}. 
  \label{eq:trapezoidbound}
\end{equation}
Recall that in terms of the kernels $\left\{ T_n \right\},$ we defined the seminorm (in \eqref{eq:TSleddintro}),
\begin{equation}\label{eq:TSledd}
  \TSledd[F]^2 = \sup_{|x|=1} \sum_{n=0}^\infty | T_n \star F(x)|^2.
\end{equation}
In \cite{sledd1981random}, it is shown that this norm is related to $\starnorm$ in the following way:
\begin{theorem} \label{thm:sledd}
  If $F \in \text{H}^1,$ then there is an absolute constant $C>1$ so that
  \[
    \|F\|_{*} \leq C\TSledd[F]. %
  \]
\end{theorem}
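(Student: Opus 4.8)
The plan is to route the bound through the classical Carleson‑measure description of $\BMOA$, and then dominate the relevant Carleson constant by the Littlewood–Paley pieces $F_n\defeq T_n\star F$, exploiting that $\TSledd[F]$ is exactly the pointwise $\ell^2$‑norm of the sequence $(F_n(x))_{n\ge0}$.

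\textbf{Reduction to polynomials and to a Carleson estimate.} The seminorm $\starnorm$ vanishes on constants. Moreover $K^A_m\star F=K_m\star F$ when $F$ is analytic, and since the Fej\'er kernel $K_m$ is nonnegative with $\|K_m\|_1=1$, Jensen's inequality gives $|T_n\star(K_m\star F)(x)|^2\le (K_m\star|T_n\star F|^2)(x)$, whence, summing in $n$ and taking a supremum, $\TSledd[K^A_m\star F]\le\TSledd[F]$; as $\starnorm[F]=\sup_m\starnorm[K^A_m\star F]$ was recorded above, it suffices to prove $\starnorm[F]\le C\TSledd[F]$ with $F$ a polynomial. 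For such $F$, by the Carleson‑measure characterization of $\BMOA$ (see \cite{Girela}) it is enough to show that for every arc $I$,
\begin{equation*}
  \frac{1}{|I|}\int_{I}\int_{1-|I|}^{1}|F'(re(\theta))|^{2}\,(1-r)\,r\,\dd r\,\dd\theta\ \lesssim\ \TSledd[F]^{2}.
\end{equation*}

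\textbf{A radial estimate.} Since $\sum_n\widehat T_n\equiv1$ on $\Z_{\ge0}$, we have $F=\sum_{n\ge0}F_n$ with $F_n$ analytic and $\widehat{F_n}$ supported in $[2^{n-1},2^{n+2})$; each $F_n$ is then essentially a polynomial of degree $\asymp2^n$, so Bernstein's inequality applies, and $\|F_n\|_\infty\le\TSledd[F]$. Fix $z=re(\theta)$, let $2^{-m}\asymp1-r$, and let $N$ satisfy $2^{-N}\asymp|I|$. Examining the Poisson extension shows that $F_n$ and $F_n'$ at $z$ essentially agree with their boundary values when $n\le m$ and are exponentially small in $2^{n-m}$ when $n>m$; combined with a \emph{local} form of Bernstein's inequality for band‑limited functions, this gives, with $m_n(\theta)^2\defeq\fint_{|\phi-\theta|<C2^{-n}}|F_n(e(\phi))|^2\,\dd\phi$,
\begin{equation*}
  |F'(re(\theta))|\ \lesssim\ \frac{\TSledd[F]}{|I|}\ +\ \sum_{N\le n\le m}2^{n}m_n(\theta)\ +\ \sum_{n>m}2^{n}e^{-c2^{n-m}}m_n(\theta),
\end{equation*}
the first term collecting the ``low'' modes $n<N$ (bounded crudely through $\|F_n\|_\infty\le\TSledd[F]$ and a geometric series).

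\textbf{Summation.} Substitute this into the displayed Carleson integral. The low‑mode term contributes $\lesssim\frac{1}{|I|}\int_I\int_0^{|I|}(\TSledd[F]/|I|)^2\,s\,\dd s\,\dd\theta\lesssim\TSledd[F]^2$. For the two sums, a weighted Cauchy–Schwarz in $n$ --- with weight $2^{-\delta n}$ ($0<\delta<2$) on the diagonal block $n\le m$, and with the factor $e^{-c2^{n-m}}$ itself as weight on the off‑diagonal block $n>m$ --- followed by exchanging the $r$‑integration and the $n$‑summation, collapses the $m$‑sums geometrically and leaves a bounded multiple of $\sum_n m_n(\theta)^2$. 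Finally, by Fubini and the definition of $\TSledd$,
\begin{equation*}
  \frac{1}{|I|}\int_I\sum_n m_n(\theta)^2\,\dd\theta\ \le\ \frac{1}{|I|}\int_{2I}\sum_n|F_n(e(\phi))|^2\,\dd\phi\ \le\ \frac{|2I|}{|I|}\,\TSledd[F]^2\ \lesssim\ \TSledd[F]^2,
\end{equation*}
which establishes the Carleson estimate and hence the theorem.

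\textbf{Main obstacle.} The crux is the radial estimate. The naive bound $|F_n'|\lesssim2^n\|F_n\|_\infty$, summed over $n$, diverges, so one must instead compare $|F_n'(e(\theta))|$ to the size of $F_n$ \emph{near} $e(\theta)$ --- the local Bernstein inequality for functions with spectrum in a band of width $\asymp2^n$ --- so that it can be fed into the pointwise square‑function bound $\sum_n|F_n(e(\theta))|^2\le\TSledd[F]^2$. Relatedly, since the $F_n$ are not compactly supported, one has to carry local $L^2$‑averages together with their (super‑polynomially decaying) tails through the entire argument rather than collapse to $\|F_n\|_\infty$ prematurely, and verify that every resulting sum over $n$ and $m$ converges; this works because each tail that appears --- the factor $e^{-c2^{n-m}}$, the kernel decay $|T_n(z)|\le20\cdot2^{-n}|1-z|^{-2}$ of \eqref{eq:trapezoidbound}, and the decay of the local‑Bernstein kernels --- beats the polynomial loss it has to compensate. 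Once the local Bernstein estimate is in place, the rest is the weighted Cauchy–Schwarz and Fubini of the previous step.
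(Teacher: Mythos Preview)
The paper does not prove Theorem~\ref{thm:sledd}. It is quoted as a result of Sledd \cite{sledd1981random} (see the sentence immediately preceding the statement), and the paper only \emph{uses} it, e.g.\ in the proof of Theorem~\ref{thm:tailsledd}. So there is no ``paper's own proof'' to compare your proposal against.

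As for your sketch itself: the strategy---reduce to polynomials via Fej\'er means, invoke the Carleson-measure description of $\BMOA$, and dominate the Carleson integral by the Littlewood--Paley square function $\sum_n|F_n|^2$ through a scale-by-scale derivative bound---is the standard route and would yield the inequality once the details are filled in. Two places deserve more care than your write-up gives them. First, the ``local Bernstein inequality'' $|F_n'(e(\theta))|\lesssim 2^n m_n(\theta)$ with $m_n$ the average of $|F_n|^2$ over a \emph{single} window of size $C2^{-n}$ is not literally true: since $F_n$ is not compactly supported on $\T$, the reproducing-kernel argument produces a dyadic sum of averages over windows of all sizes $2^{j-n}$, $j\ge0$, with weights $2^{-j}$ coming from the quadratic decay of the kernel. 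You acknowledge this in your ``Main obstacle'' paragraph, but the displayed radial estimate hides it. Second, those tails are \emph{not} super-polynomially decaying: the trapezoidal kernels $T_n$ (and any de la Vall\'ee Poussin-type reproducing kernel built from Fej\'er kernels) satisfy only the quadratic bound \eqref{eq:trapezoidbound}, and the same $|1-z|^{-2}$ is all you get for the local-Bernstein kernel. Quadratic decay is still enough---the resulting weights $2^{-j}$ are summable, and the logarithmic loss from the few scales where the enlarged window exceeds $|I|$ is absorbed by the geometric factor---but your phrase ``super-polynomially decaying'' overstates what is available and suggests the bookkeeping has not actually been carried out.
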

Sledd also gives a sufficient condition for $F$ to be in $\VMOA$, though we observe that there is a stronger one that follows directly from Theorem \ref{thm:sledd}.
\begin{theorem}
  If $F \in \text{H}^1$ and if
  \[
    \lim_{k \to \infty} \sup_{|x|=1} \sum_{n=k}^\infty | T_n \star F(x)|^2  = 0
  \]
  then $F \in \VMOA.$
  \label{thm:tailsledd}
\end{theorem}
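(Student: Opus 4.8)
The plan is to combine the quantitative relationship in Theorem \ref{thm:sledd} with a dyadic-block decomposition of $F$, estimating the $\starnorm$-contribution of the high-frequency part and then exploiting that $\VMOA$ is the closure of polynomials in $\starnorm$. Write $F = F_{<k} + F_{\geq k}$, where $F_{<k}$ collects the Taylor modes below $2^{k-1}$ (say), so that $F_{<k}$ is a polynomial and hence trivially in $\VMOA$; since $\VMOA$ is a closed subspace of $(\text{H}^1,\starnorm)$, it suffices to show $\starnorm[F - p] \to 0$ for a suitable sequence of polynomial truncations $p$ of $F$. The natural choice is $p = \sum_{n < k} T_n \star F$, or more simply a Fej\'er truncation, so that $F - p$ has all its frequency content in the modes $\geq 2^{k-1}$ and its trapezoidal decomposition is exactly $\sum_{n \geq k} T_n \star (F-p) = \sum_{n\geq k} T_n\star F$ up to a bounded number of boundary terms.

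The key step is then to apply Theorem \ref{thm:sledd} to the tail piece. Because the kernels $\{T_n\}$ have Fourier support localized to dyadic annuli $[2^{n-1}, 2^{n+2})$, the function $F - p$ (with only high modes) satisfies $T_n \star (F - p) = T_n \star F$ for all $n \geq k$ and $T_n \star (F-p) = 0$ for $n$ small, with only $O(1)$ transitional indices near $n = k$ where a harmless bounded correction appears. Hence
\[
  \TSledd[F - p]^2
  = \sup_{|x|=1} \sum_{n} |T_n \star (F-p)(x)|^2
  \lesssim \sup_{|x|=1}\sum_{n \geq k} |T_n \star F(x)|^2 + (\text{boundary terms}),
\]
and the boundary terms are themselves controlled by $\sup_{|x|=1}\sum_{n\geq k-c}|T_n\star F(x)|^2$ for an absolute constant $c$. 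By the hypothesis, this tends to $0$ as $k \to \infty$. Therefore $\starnorm[F - p] \leq C\,\TSledd[F-p] \to 0$, placing $F$ in the $\starnorm$-closure of the polynomials, i.e.\ $F \in \VMOA$.

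The main obstacle is bookkeeping at the dyadic boundary: one must choose the polynomial truncation $p$ so that subtracting it genuinely kills the low-frequency contribution to every $T_n \star(\cdot)$ with $n \geq k$ without introducing uncontrolled terms, given that $T_n$ has Fourier support spilling into $[2^{n-1},2^n)$ and $(2^{n+1},2^{n+2})$ beyond the ``clean'' window $[2^n,2^{n+1}]$. Taking $p$ to be a Fej\'er partial sum $K_N^A \star F$ with $N \asymp 2^{k}$ handles this: for $n \geq k + O(1)$ one has $T_n \star (K_N^A \star F) $ equal to either $T_n \star F$ or a modification differing only on $O(1)$ indices, and one absorbs the finitely many affected indices into the tail sum starting slightly earlier. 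A secondary point is to confirm that $F - p$ indeed lies in $\text{H}^1$ so that Theorem \ref{thm:sledd} applies, which is immediate since $p$ is a polynomial and $F \in \text{H}^1$. No probabilistic input is needed here; this is a purely deterministic functional-analytic statement.
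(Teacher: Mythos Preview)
Your proposal is correct and follows essentially the same approach as the paper: decompose $F$ into a polynomial piece plus a high-frequency tail, apply Theorem~\ref{thm:sledd} to the tail, and handle the $O(1)$ boundary indices where the trapezoidal supports overlap. The paper in fact chooses exactly your first suggested truncation $p=\sum_{n<k}T_n\star F$ (not the Fej\'er alternative), and then carries out the boundary bookkeeping explicitly via a case analysis on $T_n\star G_2$ according to whether $n\geq k+2$, $k-2\leq n\leq k+1$, or $n\leq k-3$, using $\|T_n\|_1\leq 6$ to control the transitional terms.
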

\begin{proof}
  The space $\VMOA$ is the closure of continuous functions in the $\BMOA$ norm.  Hence it suffices to find, for any $\epsilon > 0,$ a decomposition $G = G_1 + G_2$ with $G_1$ continuous and $\|G_2\|_{\BMOA} \leq \epsilon.$   For any $\epsilon > 0,$ we may by hypothesis pick $k$ sufficiently large that
  \[
     \sup_{|x|=1} \sum_{n=k}^\infty | T_n \star G(x)|^2
     \leq \epsilon.
  \]
  Using Theorem \ref{thm:sledd}, it follows that if we decompose 
  \[
    G = G_1 + G_2, \quad \text{where} \quad
    G_1 = \sum_{n=0}^{k-1} T_n \star G
    \quad
    \text{and}
    \quad
    G_2 = \sum_{n=k}^{\infty} T_n \star G.
  \]
  Then $G_1$ is a polynomial and in particular continuous.  From the properties of the Fourier support of $\left\{ T_n \right\},$ 
  \begin{equation}
    T_n \star G_2
    =
    \begin{cases}
      T_n \star G,& \text{ if  } n \geq k+2, \\
      \sum_{p=k}^{k+3} T_n \star T_{p} \star G,& \text{ if  } k-2 \leq n \leq k+1, \text{ and } \\
      0,& \text{ if  } n \leq k-3. \\
    \end{cases}
    \label{eq:TTmaster}
  \end{equation}
  Thus, we have for any $n \in [k-2,k+1]$ by using $\|T_n\|_1 \leq 6$ and convexity of the square, that 
  \[
    \|T_n \star G_2\|_{\infty}^2
    \lesssim
    \sup_{n \geq k}
    \|T_n \star G\|_{\infty}^2
    \leq
    \sup_{|x|=1} \sum_{n=k}^\infty | T_n \star G(x)|^2 \leq \epsilon,
  \]
  Applying Theorem \ref{thm:sledd} to $G_2$ and using the properties derived in \eqref{eq:TTmaster}
  \[
    \begin{aligned}
    \starnorm[G_2]^2
    \lesssim
    \sup_{|x|=1} \sum_{n=0}^\infty | T_n \star G_2(x)|^2
    &=
    \sup_{|x|=1} \sum_{n=k}^\infty | T_n \star G_2(x)|^2 \\
    &\leq
    \sup_{|x|=1} \sum_{n=k+2}^\infty | T_n \star G(x)|^2
    +
    \sum_{n=k-2}^{k+1}\|T_n \star G_2\|_\infty^2
    \lesssim 
    \epsilon.
    \end{aligned}
  \]

\end{proof}

\begin{proposition}\label{prop:sleddnonsep}
The Sledd space $\Sledd$ is non-separable.
\end{proposition}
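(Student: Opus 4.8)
The plan is to exhibit an uncountable family of functions in $\Sledd$ whose pairwise $\TSledd$--distances are bounded below by a fixed positive constant; the existence of such a family immediately precludes separability, since no countable set can be dense. The natural construction is to imitate the standard proof that $\BMOA$ (or $\Bloch$, or $\ell^\infty$) is non-separable: work with lacunary-type series whose dyadic blocks carry ``signs'' indexed by a subset of $\N$, so that the index set of all such sign patterns has the cardinality of the continuum.

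Concretely, I would fix a rapidly growing lacunary sequence $\{n_j\}$ (say $n_j = 2^{2^j}$, so that the blocks $[2^{n_j}, 2^{n_j+1}]$ lie in widely separated dyadic scales) and, for each subset $A \subseteq \N$, define $F_A(z) = \sum_{j \in A} z^{n_j}$. Each $F_A$ should lie in $\Sledd$: since the Fourier support of $F_A$ meets the support of $T_m$ for at most boundedly many $m$ near each scale $n_j$, and these scales are separated, for any fixed $x$ with $|x|=1$ the sum $\sum_m |T_m \star F_A(x)|^2$ is controlled by $\sum_j |T_{m(j)} \star z^{n_j}(x)|^2$ plus $O(1)$ neighbouring terms, and $|T_m \star z^{n_j}(x)| = |\widehat{T_m}(n_j)| \le 1$ — so in fact $\TSledd[F_A] \lesssim 1$ uniformly in $A$. (One must be slightly careful: the single monomial $z^{n_j}$ has $|z^{n_j}(x)| = 1$ for all $x$, so $\TSledd[F_A]^2 \asymp |A|$ if $A$ is infinite, which is bad. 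The fix is to use a normalized or decaying family, e.g.\ $F_A = \sum_{j \in A} \epsilon_j z^{n_j}$ with $\epsilon_j \to 0$ summable in square, or better, to place each ``bump'' at a \emph{different} dyadic scale and use that $\sup_{|x|=1}$ of a sum of far-separated lacunary pieces does not add up — i.e.\ replace $z^{n_j}$ by a block $P_j(z)$ supported in $[2^{n_j},2^{n_j+1}]$ with $\|P_j\|_\infty$ small but $\TSledd$--contribution of order one, using that the maximum over $x$ of one block and the maximum of another are attained at essentially unrelated points.)

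The key quantitative point is then a \emph{separation} estimate: for $A \ne B$, pick $j$ in the symmetric difference, and show $\TSledd[F_A - F_B] \gtrsim 1$ by testing the supremum at a point $x$ where the $j$-th block of $F_A - F_B$ is large in the relevant $T_m \star (\cdot)(x)$ norm. Since $F_A - F_B$ contains the block $\pm P_j$ at scale $n_j$ and nothing else at nearby scales, $\TSledd[F_A-F_B]^2 \ge \sup_{|x|=1} |T_{m(j)} \star P_j(x)|^2 \gtrsim 1$ by the choice of $P_j$. This gives uncountably many points of $\Sledd$ that are pairwise $\gtrsim 1$ apart in the $\TSledd$ seminorm — more precisely, after quotienting by the (trivial, since $F(0)=0$) null space, in the $\Sledd$ norm — and hence $\Sledd$ is non-separable.

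I expect the main obstacle to be the first step: arranging the building blocks $P_j$ so that \emph{simultaneously} (a) $\TSledd[\sum_{j\in A} P_j] < \infty$ for every infinite $A$ — which forces the supremum over $x$ of the superposition not to accumulate, and this is the crux, since $\TSledd$ is an $\ell^2$-in-$n$ sup-in-$x$ object and one must prevent the scales from conspiring at a common $x$ — and (b) each individual block still contributes order one to the seminorm so that the separation estimate survives. A clean way to guarantee (a) is to let $\TSledd[\sum_{j \in A} P_j]^2 \le \sum_j \TSledd[P_j]^2$ fail to help and instead observe that $T_m \star (\sum_j P_j)$ is, for each $m$, equal to $T_m \star P_{j}$ for the \emph{unique} $j$ with $n_j$ near $m$ (by disjointness of Fourier supports once $n_{j+1} - n_j \ge 4$), so $\sum_m |T_m \star (\sum_j P_j)(x)|^2 = \sum_j \sum_{m \sim n_j} |T_m \star P_j(x)|^2$; bounding each inner sum by $\|P_j\|_\infty^2 \sum_{m} \|T_m\|_1^2 \lesssim \|P_j\|_\infty^2$ and choosing $\|P_j\|_\infty$ to be, say, a fixed small constant, makes the total $\lesssim \sup_j \|P_j\|_\infty^2 \cdot (\text{something})$ — one still needs the sum over $j$ to converge, so in fact $\|P_j\|_\infty$ must decay, and then (b) degrades. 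The resolution is subtler: use that $\sup_{|x|=1}\sum_m|\cdots|^2$ is a supremum, not a sum over $j$, so if the $P_j$ live at scales so separated that $T_m \star P_j(x)$ is appreciable only for $x$ in a tiny arc depending on $j$, and these arcs can be taken disjoint, then the supremum picks out one $j$ at a time and equals $\max_j \TSledd[P_j]^2 \asymp 1$. Making this disjoint-arc phenomenon precise — essentially that a normalized lacunary polynomial at scale $N$ has its $T$-Littlewood--Paley mass concentrated, as a function of the evaluation point, in a way that can be spatially separated from that of a polynomial at a much larger scale — is the technical heart of the argument.
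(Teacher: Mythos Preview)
Your strategy --- an uncountable family $\{F_A\}$ built from blocks with disjoint Fourier supports that are spatially localized at pairwise disjoint arcs, so that the $\sup_{|x|=1}$ in $\TSledd$ picks out essentially one block at a time --- is exactly the paper's approach, and your diagnosis of the obstacle (monomials have modulus $1$ everywhere, so one needs genuinely localized blocks) is correct.

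The gap is that you stop at the point you yourself flag as the ``technical heart'': you never construct the $P_j$. The paper fills this with a concrete choice: take
\[
  G_j(z) = \frac{1}{2^j+1}\, z^{2^{j+1}} K_{2^j}\bigl(z\,e(1/j)\bigr),
\]
a normalized Fej\'er kernel shifted in frequency to have Fourier support in $[2^j,2^{j+2}]$ and rotated so that its peak sits at $e(-1/j)$. Then $|G_j|\le 1$ everywhere, $|G_j(e(-1/j))|=1$, and $|G_j(e(-1/j+\theta))|\lesssim 2^{-j}$ once $|\theta|\gtrsim 2^{-j/2}$; since the points $e(-1/j)$ are distinct and the summable tails handle overlap, $H_A=\sum_{j\in A}G_j$ with $A\subset 5\N$ gives the desired family. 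Note in particular that the spatial localization is produced by the Fej\'er-kernel shape, not by super-lacunary spacing of the scales --- so your proposed $n_j=2^{2^j}$ is unnecessary, and ordinary geometric spacing (here $5\N$, to keep the Fourier supports of the $T_n\star G_j$ disjoint) suffices.
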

\begin{proof}[Sketch of the proof]
We sketch the construction of an uncountable family of analytic functions in $\Sledd$ whose pairwise distances in $\TSledd$ are uniformly bounded below. Put
\[
G_j(z) = \frac1{2^j + 1} z^{2^{j+1}} K_{2^j} (z e(1/j) ),\qquad j \ge 1.
\]
Notice that $\widehat{G}_j$ is supported in $[2^{j}, 2^{j+2}]$, and that $G_j$ has the following properties:
\begin{enumerate}
	\item $|G_j(e(-1/j))| = 1$,
	\item $|G_j(e(\theta))| \le 1$ for all $\theta$,
	\item $|G_j(e(-1/j + \theta))| \lesssim 2^{-j}$ when $c 2^{-j/2} \le |\theta| \le \pi$.
\end{enumerate}

For any $A \subset 5 \bbn$ let $H_A = \sum_{n \in A} G_n$. By the above properties all these functions belong to $\Sledd$, and are uniformly separated from each other.
\end{proof}

\begin{remark}
The construction above gives an example of functions in $\Sledd$ which are not continuous on the boundary of the disk.
\end{remark}

\subsection*{GAFs and the Sledd space}

We shall be interested in applying Sledd's condition to GAFs, for which purpose it is possible to make some simplifications.  For any $n \geq 0,$ let $R_n$ be the kernel defined by
\[
  \widehat{R_n}(K) = 
  \begin{cases}
    1 & \text{if } K \in [2^n, 2^{n+1}), \\
  0 & \text{otherwise}.
  \end{cases}
\]
In short, for a GAF, (and more generally any random series with symmetric independent coefficients) we may replace the trapezoidal kernel $T_n$ by $R_n;$ specifically:
\begin{theorem} \label{thm:tailsleddequiv}
  Suppose $G$ is an $\text{H}^2$-GAF.  Then the following are equivalent:
  \begin{enumerate}[\textnormal{(}i\textnormal{)}]
    \item 
    \(
      \lim_{k \to \infty} \sup_{|x|=1} \sum_{n=k}^\infty | T_n \star G(x)|^2  = 0 \quad \As
    \)
    \item
      \(
	\lim_{k \to \infty} \Exp \biggl[ \sup_{|x|=1} \sum_{n=k}^\infty | T_n \star G(x)|^2 \biggr]  = 0. 
    \)
    \item
      \(
	\lim_{k \to \infty} \Exp \biggl[ \sup_{|x|=1} \sum_{n=k}^\infty | R_n \star G(x)|^2 \biggr]  = 0.
    \)
    \item 
    \(
      \lim_{k \to \infty} \sup_{|x|=1} \sum_{n=k}^\infty | R_n \star G(x)|^2  = 0 \quad \As
    \)
  \end{enumerate}
\end{theorem}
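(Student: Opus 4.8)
Throughout put
\[
  Q_k^{T}=\sup_{|x|=1}\sum_{n\ge k}|T_n\star G(x)|^2,
  \qquad
  Q_k^{R}=\sup_{|x|=1}\sum_{n\ge k}|R_n\star G(x)|^2,
\]
and write $G_m=R_m\star G$ for the $m$-th dyadic block, so that the $G_m$ are independent, $\widehat{G_m}$ is supported in $[2^m,2^{m+1})$, and $\{a_n\}\in\ell^2$ forces $\Exp|G_m(x)|^2=\sigma_m^2\to0$. The plan is to split the four-way equivalence into (a) the two ``almost sure $\Longleftrightarrow$ in expectation'' statements $(i)\Leftrightarrow(ii)$ and $(iv)\Leftrightarrow(iii)$, obtained from Gaussian concentration, and (b) the ``trapezoid versus sharp'' statement $(ii)\Leftrightarrow(iii)$, obtained from the two comparisons $\Exp Q_k^{T}\lesssim\Exp Q_{k-1}^{R}$ and $\Exp Q_k^{R}\lesssim\Exp Q_{k-1}^{T}$.

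For (a): fix $k$ and $\Theta\in\{T,R\}$. The map $F\mapsto(Q_k^{\Theta}(F))^{1/2}$ is an (extended) seminorm on $\text{H}^2$, being a supremum over $|x|=1$ of the seminorms $F\mapsto\|(\Theta_n\star F(x))_{n\ge k}\|_{\ell^2}$, so as a function of the Gaussian coefficient vector it is a seminorm; its Lipschitz constant for the Euclidean metric is, since $|x|=1$ makes the following independent of $x$, the square root of the operator norm of the Hermitian matrix $M=\big(\sum_j\widehat{\Theta_n}(j)\overline{\widehat{\Theta_{n'}}(j)}\,a_j^2\big)_{n,n'\ge k}$. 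As $|\widehat{\Theta_n}|\le1$ and each $\widehat{\Theta_n}$ lives on $O(1)$ consecutive dyadic blocks, $M$ is banded with entries $\lesssim\sup_{n\ge k-O(1)}\sigma_n^2=:L_k^2$, hence $\|M\|\lesssim L_k^2$, which is finite and tends to $0$. Proposition~\ref{prop:glip} (applied to finite truncations of $G$ and passed to the limit as in the proof of Proposition~\ref{prop:approximable}) then gives $\Var((Q_k^{\Theta})^{1/2})\lesssim L_k^2$ and $|\Exp(Q_k^{\Theta})^{1/2}-\operatorname{med}(Q_k^{\Theta})^{1/2}|\lesssim L_k$. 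Combined with the monotonicity of $k\mapsto Q_k^{\Theta}$ this yields both implications: if $Q_k^{\Theta}\to0$ a.s.\ then (using that $Q_0^{\Theta}<\infty$ a.s., the first terms being polynomials) the medians tend to $0$, whence $\Exp(Q_k^{\Theta})^{1/2}\to0$ and $\Exp Q_k^{\Theta}=\Var((Q_k^{\Theta})^{1/2})+(\Exp(Q_k^{\Theta})^{1/2})^2\to0$; conversely if $\Exp Q_k^{\Theta}\to0$ then $(Q_k^{\Theta})^{1/2}\to0$ in $L^1$, hence in probability, and a non-increasing sequence tending to $0$ in probability tends to $0$ a.s. This gives $(i)\Leftrightarrow(ii)$ and $(iv)\Leftrightarrow(iii)$.

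For the first comparison, $(iii)\Rightarrow(ii)$: since $\widehat{T_m}$ is supported in $[2^{m-1},2^{m+2})$ we have $T_m\star G=\sum_{|\delta|\le1}T_m\star G_{m+\delta}$, so $|T_m\star G(x)|^2\le3\sum_{|\delta|\le1}|T_m\star G_{m+\delta}(x)|^2$ and $Q_k^{T}\le3\sum_{|\delta|\le1}\sup_{|x|=1}\sum_{m\ge k}|T_m\star G_{m+\delta}(x)|^2$. For fixed $\delta$, re-indexing $l=m+\delta$, the quantity $\sup_{|x|=1}\sum_{l\ge k+\delta}|T_{l-\delta}\star G_l(x)|^2$ equals $\sup_{|x|=1}\sum_{l\ge k+\delta}|R_l\star\widetilde G_\delta(x)|^2$ for the GAF $\widetilde G_\delta$ obtained from $G$ by multiplying its $j$-th Taylor coefficient by $\widehat{T_{b(j)-\delta}}(j)\in[-1,1]$, where $b(j)$ is the block index of $j$; the key point is that each $\xi_j$ now occurs in a single summand. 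Applying the contraction principle (Proposition~\ref{prop:contraction}(i), after a routine truncation) to the convex functional $F\mapsto\sup_{|x|=1}\sum_{l\ge k+\delta}|R_l\star F(x)|^2$ gives $\Exp\sup_{|x|=1}\sum_{l\ge k+\delta}|T_{l-\delta}\star G_l(x)|^2\le\Exp Q_{k+\delta}^{R}$, and summing over $\delta$ yields $\Exp Q_k^{T}\le9\,\Exp Q_{k-1}^{R}$.

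The second comparison, $(ii)\Rightarrow(iii)$, is the main obstacle: the sharp projections $R_n$ are not dominated by the $T_n$ deterministically ($\|R_n\|_{L^1(\T)}\asymp n$), so one must use the independence of the dyadic blocks. From $\sum_m\widehat{T_m}\equiv1$ we get $R_n\star G=\sum_{|\delta|\le1}R_n\star T_{n+\delta}\star G$, and since $R_n\star T_{n+\delta}\star G=T_{n+\delta}\star G_n$, re-indexing reduces matters to bounding $\Exp\sup_{|x|=1}\sum_{m\ge k+\delta}|T_m\star G_{m-\delta}(x)|^2$ by a constant times $\Exp Q_{k+\delta}^{T}$ for each $|\delta|\le1$. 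To do this, insert i.i.d.\ signs $\epsilon=(\epsilon_l)$ on the blocks, independent of $G$; since the $\xi_j$ are sign-symmetric, $G^{\epsilon}:=\sum_l\epsilon_lG_l\lawequals G$, while $T_m\star G^{\epsilon}=\sum_{|\delta'|\le1}\epsilon_{m+\delta'}\,T_m\star G_{m+\delta'}$, so for every fixed $x$
\[
  \Exp_\epsilon|T_m\star G^{\epsilon}(x)|^2=\sum_{|\delta'|\le1}|T_m\star G_{m+\delta'}(x)|^2\ \ge\ |T_m\star G_{m-\delta}(x)|^2 .
\]
Hence $\Exp_\epsilon\sup_{|x|=1}\sum_{m\ge k+\delta}|T_m\star G^{\epsilon}(x)|^2\ge\sup_{|x|=1}\Exp_\epsilon\sum_{m\ge k+\delta}|T_m\star G^{\epsilon}(x)|^2\ge\sup_{|x|=1}\sum_{m\ge k+\delta}|T_m\star G_{m-\delta}(x)|^2$; taking expectations in $\xi$ and using $G^{\epsilon}\lawequals G$ turns the left-hand side into $\Exp Q_{k+\delta}^{T}$. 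Summing over $\delta$ gives $\Exp Q_k^{R}\le9\,\Exp Q_{k-1}^{T}$, which together with the previous paragraph establishes $(ii)\Leftrightarrow(iii)$. Chaining $(i)\Leftrightarrow(ii)\Leftrightarrow(iii)\Leftrightarrow(iv)$ finishes the proof.
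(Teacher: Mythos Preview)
Your argument is correct, and it proves the same theorem by a route that differs from the paper in two places.

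For the equivalences $(i)\Leftrightarrow(ii)$ and $(iii)\Leftrightarrow(iv)$, the paper shows that each of the tail seminorms $\TSledd<k>$, $\RSledd<k>$ is \emph{approximable} in the sense of \eqref{eq:mallorca} (Claim~\ref{claim:approximable}), invokes Proposition~\ref{prop:approximable} to upgrade almost-sure finiteness to finiteness of the second moment, and then finishes by monotone/dominated convergence. You bypass the abstract approximability machinery by computing the Lipschitz constant of $(Q_k^{\Theta})^{1/2}$ directly: the matrix $M$ is banded with entries bounded by $\sup_{n\ge k-O(1)}\sigma_n^2$, so the Lipschitz constant itself tends to $0$. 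This yields the variance and mean--median control with an explicit rate, at the cost of the small extra linear-algebra computation.

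For $(ii)\Rightarrow(iii)$ the two proofs diverge more substantially. The paper makes the overlapping trapezoids disjoint by restricting to residue classes $n\in 4\N+j$; within a fixed class the $T_n$ have disjoint Fourier supports, so each $\xi_j$ appears in a single summand and the contraction principle applies directly to compare $\sum_n|R_n\star G|^2$ with $\sum_n|T_n\star G|^2$. Your approach avoids the residue-class bookkeeping entirely: introducing i.i.d.\ block signs gives $G^{\epsilon}=\sum_l\epsilon_lG_l\lawequals G$, while orthogonality in $\epsilon$ produces $\Exp_\epsilon|T_m\star G^{\epsilon}(x)|^2=\sum_{|\delta'|\le1}|T_m\star G_{m+\delta'}(x)|^2$; combining this with $\Exp\sup\ge\sup\Exp$ and then averaging over $\xi$ gives the comparison. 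Both arguments exploit the block independence and symmetry, but through different devices; your sign-randomisation trick is somewhat cleaner, while the paper's residue-class argument stays entirely within the contraction principle. For $(iii)\Rightarrow(ii)$ the two proofs are essentially the same up to indexing.
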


\begin{proof}[Proof of Theorem \ref{thm:tailsleddequiv}]
  We begin with the equivalence of $(ii)$ and $(iii),$ and the implication that $(iii)$ implies $(ii).$
  For any $n \geq 0$ and any $j \in \{1,2,3,4\}$ define $R_{n,j} = T_n \star R_{n+j-1}.$  Then $T_n = \sum_{j=1}^4 R_{n,j}.$ 
  Using convexity, we can bound
  \[
    \sup_{|x|=1} \sum_{n=k}^\infty | T_n \star G(x)|^2
    \lesssim
    \sum_{j=1}^4
    \sup_{|x|=1} \sum_{n=k}^\infty | R_{n,j} \star G(x)|^2.
  \]
  Since $\widehat R_{n,j}$ is supported in  $[2^n,2^{n+1})$ and has $\|\widehat R_{n,j}\|_\infty \leq 1,$ the contraction principle implies that for any $0 \leq k \leq m < \infty,$
    \[
      \Exp \sup_{|x|=1} \sum_{n=k}^m | R_{n,j} \star G(x)|^2
      \leq \Exp \sup_{|x|=1} \sum_{n=k}^m | R_{n} \star G(x)|^2
      \leq \Exp \sup_{|x|=1} \sum_{n=k}^\infty | R_{n} \star G(x)|^2.
    \]
  Sending $m \to \infty$ and using monotone convergence implies that
  \[
      \Exp \sup_{|x|=1} \sum_{n=k}^\infty | R_{n,j} \star G(x)|^2
      \leq \Exp \sup_{|x|=1} \sum_{n=k}^\infty | R_{n} \star G(x)|^2,
  \]
  from which the desired convergence follows.

  Conversely, to see that $(ii)$ implies $(iii),$ we begin by bounding
  \[
    \sup_{|x|=1} \sum_{n=k}^\infty | R_{n} \star G(x)|^2
    \leq
    \sum_{j=1}^4 
    \sup_{|x|=1} \sum_{\substack{n \geq k \\ n \in 4\N + j} }^\infty | R_{n} \star G(x)|^2. 
  \]
  Then by the contraction principle and monotone convergence for any $j \in \left\{ 1,2,3,4 \right\}$
  \[
    \Exp
     \sup_{|x|=1} \sum_{\substack{n \geq k \\ n \in 4\N + j} }^\infty | R_{n} \star G(x)|^2
    \leq 
    \Exp
     \sup_{|x|=1} \sum_{\substack{n \geq k \\ n \in 4\N + j} }^\infty | T_{n} \star G(x)|^2
    \leq 
    \Exp
    \sup_{|x|=1} \sum_{\substack{n \geq k} }^\infty | T_{n} \star G(x)|^2,
  \]
  which completes the proof of the desired implication.

  We turn to showing the equivalence of $(i)$ and $(ii).$
  From Markov's inequality, $(ii)$ implies that
  \[
     \sup_{|x|=1} \sum_{n=k}^\infty | T_n \star G(x)|^2
     \Prto[k] 0.
  \]
  As the sequence
  \(
     \sup_{|x|=1} \sum_{n=k}^\infty | T_n \star G(x)|^2
  \)
  is monotone and therefore always converges, it follows it converges almost surely to $0.$

  Define for each $k \in \N$ the seminorms
  \[
    \begin{aligned}
      &\RSledd<k> : \text{H}^1 \to [0,\infty], \quad\text{where}\quad \RSledd[f]<k>^2 \coloneqq \sup_{|x|=1} \sum_{n=k}^\infty | R_n \star f(x)|^2 \\
      &\TSledd<k> : \text{H}^1 \to [0,\infty], \quad\text{where}\quad \TSledd[f]<k>^2 \coloneqq \sup_{|x|=1} \sum_{n=k}^\infty | T_n \star f(x)|^2
  \end{aligned}
  \]
  In preparation to use Proposition \ref{prop:approximable}, we show the following claim.
  \begin{claim}\label{claim:approximable}
    The seminorms $\left\{ \RSledd<k>, \TSledd<k> \right\}$ are approximable.
  \end{claim}
  \begin{proof}
  Let $p_m$ be the polynomial whose coefficients are $1$ for coefficients $0$ to $2^{m+1}-1.$  Then for any $m > k,$
  \[
    \RSledd[p_m \star f]<k>^2 = \sup_{|x|=1} \sum_{n=k}^m | R_n \star f(x)|^2
    \xrightarrow[m \to \infty]{} \RSledd[f]<k>^2.
  \]
  Let $q_m(z)$ be the sum of analytic part of $\sum_{k=0}^m T_k(z).$ Then for analytic $f$ in the disk, $q_m \star f = \sum_{k=0}^m T_k \star f.$  Moreover, using \eqref{eq:trapezoid} the sum $\sum_{k=0}^m T_k$ can be represented by a sum of a finite number of Fej\'er kernels with cardinality bounded independent of $k.$  Therefore there is an absolute constant $C >0$ so that for all $m$
  \begin{equation}\label{eq:fejerispositive}
    \|q_m \star f\|_\infty \leq \|{\textstyle \sum_{k=0}^m T_k}\|_1 \|f\|_\infty \leq C \|f\|_\infty.
  \end{equation}
  Using that $\widehat{q_m}(j) = 1$ for $0 \leq j \leq 2^m-1$
  \[
    \TSledd[q_m \star f]<k>^2
    \geq 
    \sup_{|x|=1} \sum_{n=k}^{m-2} | T_n \star f(x)|^2
    \xrightarrow[m \to \infty]{} \TSledd[f]<k>^2,
  \]
  and so if $\sup_m \TSledd[q_m \star f]<k>^2 < \infty$ then so is $\TSledd[f]<k>^2.$
  Conversely, if $\TSledd[f]<k>^2<\infty$ then $\sup_{n \geq k}  \| T_n \star f \|_\infty < \infty$, and hence with the same $C$ as in \eqref{eq:fejerispositive},
  \[
    \max_{m-1 \leq n \leq m+2} \|q_m \star T_n \star f\|_\infty \leq C\TSledd[f]<k>.
  \]
  So
  \[
    \TSledd[q_m \star f]<k>^2
    \leq
    \sup_{|x|=1} \sum_{n=k}^{m-2} | T_n \star f(x)|^2
    +
    \sum_{n=m-1}^{m+2}
    \|q_m \star T_n \star f\|_\infty^2
    \leq (1+4C^2)\TSledd[f]<k>^2 < \infty.
  \]
  \end{proof}

  We show the equivalence of (iii) and (iv).  The proof of the equivalence of (i) and (ii) is the same.
  From (iii) it follows from Markov's inequality that
  \[
    \sup_{|x|=1} \sum_{n=k}^\infty | R_n \star G(x)|^2  \Prto[k] 0
  \]
  By monotonicity $\sup_{|x|=1} \sum_{n=k}^\infty | R_n \star G(x)|^2$ converges almost surely, and so it converges almost surely to $0.$
  From (iv) and by Claim \ref{claim:approximable}, there exists a $k_0$ so that
  \[
    \Exp \sup_{|x|=1} \sum_{n=k_0}^\infty | R_n \star G(x)|^2 < \infty.
  \]
  As a consequence, it is possible to take $k_0=0.$
  By dominated convergence
  \[
    \lim_{k\to \infty} \Exp \biggl[ \sup_{|x|=1} \sum_{n=k}^\infty | R_n \star G(x)|^2 \biggr] = 0.
  \]

 \end{proof}

\begin{remark}
  \label{rem:sleddequiv}
In reviewing the proof, one also sees that under the same assumptions, the following are equivalent:
\begin{enumerate}[\textnormal{(}i\textnormal{)}]
    \item 
    \(
      \sup_{|x|=1} \sum_{n=0}^\infty | T_n \star G(x)|^2  < \infty \quad \As
    \)
    \item
      \(
	\Exp \biggl[ \sup_{|x|=1} \sum_{n=0}^\infty | T_n \star G(x)|^2 \biggr]  < \infty. 
    \)
    \item
      \(
	\Exp \biggl[ \sup_{|x|=1} \sum_{n=0}^\infty | R_n \star G(x)|^2 \biggr]  <\infty.
    \)
    \item 
    \(
      \sup_{|x|=1} \sum_{n=0}^\infty | R_n \star G(x)|^2  < \infty \quad \As
    \)
  \end{enumerate}
  Moreover, the proof gives that there is an absolute constant $C>0$ so that
  \[
    \frac{1}{C}
    \Exp
    \RSledd[G]^2
    \leq
    \Exp
    \TSledd[G]^2
    \leq
    C
    \Exp
    \RSledd[G]^2.
  \]
\end{remark}

Finally, we show that for a GAF, finiteness of $\RSledd[G]$ in fact implies $G \in \VMOA.$ 

\begin{theorem}
  If $G$ is an $\text{H}^2$-GAF for which
  \[ 
    \RSledd[G]^2 = \sup_{|x|=1} \sum_{n=0}^\infty | R_n \star G(x)|^2 <\infty \quad \As,
  \]
  then
  \[
      \lim_{k \to \infty} \sup_{|x|=1} \sum_{n=k}^\infty | R_n \star G(x)|^2  = 0 \quad \As
  \]
  Furthermore, $\RSledd[G] {} < \infty$ implies $G$ is in $\VMOA.$
  \label{thm:sleddvmoa}
\end{theorem}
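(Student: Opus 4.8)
The plan is to get the ``furthermore'' clause from the displayed limit, and to prove that limit by passing to expectations. For the first step: the displayed limit (stated with the kernels $R_n$) is precisely condition~(iv) of Theorem~\ref{thm:tailsleddequiv}, so condition~(i) holds as well, i.e.\ $\lim_k \sup_{|x|=1}\sum_{n\ge k}|T_n\star G(x)|^2 = 0$ almost surely; since $G\in \text{H}^2\subseteq \text{H}^1$ a.s., Theorem~\ref{thm:tailsledd} applied to (a.e.\ realization of) $G$ gives $G\in\VMOA$ a.s.

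So everything reduces to the displayed limit. Write $Y_k \defeq \RSledd[G]<k>^2 = \sup_{|x|=1} V_k(x)$, where $V_k(x) \defeq \sum_{n\ge k}|R_n\star G(x)|^2$. The sequence $(Y_k)$ is nonincreasing, so it converges a.s.\ to a limit which, being measurable for the tail $\sigma$--algebra of $\{\xi_j\}$, is a.s.\ a constant $c\in[0,\infty)$ (finite by hypothesis); I must show $c=0$. Since the seminorms $\RSledd<k>$ are approximable (Claim~\ref{claim:approximable}), Proposition~\ref{prop:approximable} gives $\Exp Y_k<\infty$ for every $k$. It therefore suffices to prove $\Exp Y_k\to 0$: Markov's inequality then forces $Y_k\Prto[k] 0$, and a sequence that is monotone and convergent to $0$ in probability converges to $0$ a.s., so $c=0$.

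To bound $\Exp Y_k$, fix a point $x_0$ on the circle and split $\Exp Y_k \le \Exp V_k(x_0) + \Exp\sup_{|x|=1}\bigl(V_k(x)-V_k(x_0)\bigr)$. The first term is $\sum_{n\ge k}\sigma_n^2 = \sum_{j\ge 2^k}a_j^2 \to 0$. For the second, note that $x\mapsto V_k(x)-V_k(x_0)$ is a \emph{centered} order--two Gaussian chaos process, since $\Exp V_k(x) = \sum_{n\ge k}\sigma_n^2$ is independent of $x$. I would estimate its supremum by chaining: because each $R_n\star G$ is, on its dyadic band, an independent Gaussian trigonometric polynomial, a Hanson--Wright--type computation shows the increments of $V_k$ have mixed sub--Gaussian and sub--exponential tails governed by a pseudo--metric $\rho_k$ on the circle with two key properties, namely $\rho_k\le \rho_{k-1}\le\cdots\le\rho_0$, so $N(\T,\rho_k,\epsilon)\le N(\T,\rho_0,\epsilon)$, and $\operatorname{diam}(\rho_k)\to 0$, controlled by $(\sup_{n\ge k}\sigma_n)\cdot(\sum_{j\ge 2^k}a_j^2)^{1/2}$ with both factors tending to $0$. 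The Dudley bound for such a process then gives
\[
  \Exp\sup_{|x|=1}\bigl(V_k(x)-V_k(x_0)\bigr)
  \;\lesssim\;
  \int_0^{\operatorname{diam}(\rho_k)}\!\Bigl(\sqrt{\log N(\T,\rho_k,\epsilon)}+\log N(\T,\rho_k,\epsilon)\Bigr)\dd\epsilon
  \;\le\;
  \int_0^{\operatorname{diam}(\rho_k)}\!\Bigl(\sqrt{\log N(\T,\rho_0,\epsilon)}+\log N(\T,\rho_0,\epsilon)\Bigr)\dd\epsilon .
\]
The integrand on the right is integrable near $0$: this is exactly where the hypothesis $Y_0<\infty$ a.s.\ enters, since for a process with such increments indexed by the one--dimensional circle, a.s.\ boundedness of the supremum forces the entropy integral to converge (on a one--dimensional index set the majorizing--measure functional collapses to a Dudley integral). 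A convergent integral over the domain $[0,\operatorname{diam}(\rho_k)]$, which shrinks to a point, tends to $0$; hence $\Exp Y_k\to 0$.

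The heart of the matter --- and the step I expect to be the main obstacle --- is this last estimate. Two things need care: identifying the correct pseudo--metric $\rho_k$ and verifying the chaos chaining bound with its mixed tails; and, more subtly, extracting convergence of the $\rho_0$--entropy integral from nothing more than the a.s.\ finiteness of $\sup_x V_0(x)$, which genuinely uses that the index set is the circle. The remaining ingredients --- the $0$--$1$ law, the reduction to expectations via approximability and Proposition~\ref{prop:approximable}, and the passage between the $R_n$ and $T_n$ formulations via Theorem~\ref{thm:tailsleddequiv} --- are routine.
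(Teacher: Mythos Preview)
Your reductions at the start and end are correct: the ``furthermore'' clause indeed follows from the displayed limit via Theorems~\ref{thm:tailsleddequiv} and~\ref{thm:tailsledd}, and the passage from a.s.\ finiteness of $Y_0$ to $\Exp Y_0<\infty$ via approximability (Claim~\ref{claim:approximable} and Proposition~\ref{prop:approximable}) is fine. The gap is exactly the step you flagged as the obstacle, and it is a real one rather than a technicality. Your argument needs the implication
\[
  \sup_{|x|=1} V_0(x) < \infty \text{ a.s.} \quad\Longrightarrow\quad \int_0^{\operatorname{diam}(\rho_0)}\Bigl(\sqrt{\log N(\T,\rho_0,\epsilon)}+\log N(\T,\rho_0,\epsilon)\Bigr)\dd\epsilon<\infty,
\]
i.e.\ a \emph{lower} bound of majorizing--measure type for the supremum of an order--two Gaussian chaos. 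The fact that on the circle $\gamma_\alpha$ is comparable to a Dudley integral only helps in the \emph{upper}--bound direction. The converse---boundedness implying finiteness of the chaining functional---is the content of Talagrand's majorizing--measure theorem, whose proof relies on Gaussian comparison (Sudakov minoration), and there is no analogous Sudakov inequality for second--order chaos with mixed tails. So the sentence ``a.s.\ boundedness of the supremum forces the entropy integral to converge'' has no justification here; nothing in the paper supplies it either. Without that implication the chain of inequalities you wrote does not close, and you cannot conclude $\Exp Y_k\to 0$.

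The paper avoids this difficulty entirely and argues by contradiction with a soft probabilistic trick rather than chaining. If the tail limit were a.s.\ some $\delta>0$, one can extract \emph{disjoint} finite blocks $u_k=\sum_{n=m_k}^{m_k'} |R_n\star G|^2$ with $\Pr(\|u_k\|_\infty>\delta)>\delta$; these are independent, so by Borel--Cantelli $\limsup_k\|u_k\|_\infty>0$ a.s. One then applies Kahane's random--rotation lemma (Proposition~\ref{prop:rotations}): introducing independent uniform phases $\theta_k$ and conditioning on $G$, there is a.s.\ a single point $t$ with $\limsup_k u_k(e(t-\theta_k))>0$. Because each dyadic block of a GAF is rotation--invariant in law and the blocks are independent, rotating block $k$ by $\theta_k$ does not change the joint distribution; hence there is a.s.\ a single point $s$ with $\limsup_k u_k(e(s))>0$, forcing $\sum_n|R_n\star G(e(s))|^2=\infty$ and contradicting the hypothesis. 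No entropy or chaining estimate, and in particular no converse chaining bound, is needed.
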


We will need the following result \cite[Chapter 5, Proposition 12]{Kahane}.
\begin{proposition}
  \label{prop:rotations}
  Let $u_1,u_2,\dots,$ be a sequence of continuous functions on the unit circle such that $\limsup_{k \to \infty} \|u_k\|_\infty > 0.$ Let $\theta_1, \theta_2, \dots$ be a sequence of independent random variables uniformly distributed on $[0,1].$  Then almost surely there exists a $t \in [0,1]$ such that
  \(
\limsup_{k \to \infty} |u_k(e(t-\theta_k))| > 0.
\)
\end{proposition}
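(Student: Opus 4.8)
The plan is to reduce the statement to a claim about random translates of arcs and then prove that claim by a Cantor-type nested-arc construction in which every stage is kept alive by the second Borel--Cantelli lemma.

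\emph{Reduction.} Since $\limsup_k\|u_k\|_\infty>0$, there are $\delta>0$ and indices $k_1<k_2<\cdots$ with $\|u_{k_j}\|_\infty\ge\delta$ for all $j$. Passing to this subsequence only strengthens the conclusion, because $\limsup_j|u_{k_j}(e(t-\theta_{k_j}))|\le\limsup_k|u_k(e(t-\theta_k))|$, so I would relabel and assume $\|u_m\|_\infty\ge\delta$ for every $m$, with $(\theta_m)$ still i.i.d.\ uniform. By continuity and compactness $|u_m|$ attains a value $\ge\delta$ at some point $s_m$, and hence $\{s:|u_m(e(s))|\ge\delta/2\}$ contains a closed arc $E_m$ around $s_m$ of some positive length $\ell_m>0$; this is the only place where continuity is genuinely used, since for merely measurable $u_m$ the super-level set need not contain an arc. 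It then suffices to produce, on an event of probability one, a single $t$ with $t-\theta_m\in E_m$, i.e.\ $t\in E_m+\theta_m$ (addition $\bmod 1$), for \emph{every} $m$ in some sub-subsequence: along that sub-subsequence $|u_m(e(t-\theta_m))|\ge\delta/2$, and the conclusion follows. Note that $E_m+\theta_m$ is a fixed arc rotated by the uniform $\theta_m$, so its center is uniform on the circle, and these centers are independent across $m$.

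\emph{The construction.} I would build nested closed arcs $I_1\supseteq I_2\supseteq\cdots$ and indices $m_1<m_2<\cdots$ with $I_j\subseteq E_{m_j}+\theta_{m_j}$ and $\lambda_j:=|I_j|>0$. Take $m_1=1$, $I_1=E_1+\theta_1$. Given the data through step $j$ (so $I_j$ and $m_j$ are determined, and the variables $\theta_m$ with $m>m_j$ remain i.i.d.\ uniform and independent of what has been constructed), call $m>m_j$ \emph{good} if the center of $E_m+\theta_m$ lies in the middle third of $I_j$; conditionally on the past this has probability $\lambda_j/3>0$ and the events are independent over $m$, so $\sum_{m>m_j}\lambda_j/3=\infty$ and the conditional second Borel--Cantelli lemma gives that a.s.\ infinitely many $m$ are good. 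Let $m_{j+1}$ be the least good index, let $c$ be the center of $E_{m_{j+1}}+\theta_{m_{j+1}}$, set $r=\min(\ell_{m_{j+1}}/2,\lambda_j/3)>0$, and let $I_{j+1}$ be the closed arc of radius $r$ about $c$. Since $c$ is in the middle third of $I_j$ and $r\le\lambda_j/3$ we get $I_{j+1}\subseteq I_j$, and since $r\le\ell_{m_{j+1}}/2$ we get $I_{j+1}\subseteq E_{m_{j+1}}+\theta_{m_{j+1}}$, with $\lambda_{j+1}=2r>0$. An easy induction shows each stage succeeds with probability one, hence all of them do on a single event $\Omega_0$ of probability one. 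On $\Omega_0$ the $I_j$ are nested nonempty compacta, so $\bigcap_j I_j\neq\emptyset$, and any $t$ in the intersection lies in $E_{m_j}+\theta_{m_j}$ for all $j$, as required.

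\emph{Main obstacle.} The difficulty is structural, not computational: one must see that at \emph{every} stage there are infinitely many \emph{independent} rotation attempts, each with a fixed positive conditional probability of success, so that Borel--Cantelli keeps the recursion from dying. The arc lengths $\ell_m$ may tend to zero arbitrarily fast, which is exactly why one cannot instead show that the limsup set $\bigcap_N\bigcup_{m\ge N}(E_m+\theta_m)$ is nonempty by a first--Borel--Cantelli or Fubini argument (that set may well have measure zero). The remaining ingredients — the subsequence reduction, the use of continuity to locate the arcs $E_m$, and the measurability bookkeeping that the recursion lives on one probability-one event (a routine consequence of the strong Markov property for i.i.d.\ sequences) — are standard.
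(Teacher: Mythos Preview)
Your argument is correct: the reduction to arcs $E_m$ via continuity, the nested-arc construction, and the use of the second Borel--Cantelli lemma at each stage (conditioning on the past and using that the remaining $\theta_m$ are still i.i.d.\ uniform) all go through as you describe. Note, however, that the paper does not supply its own proof of this proposition --- it simply cites \cite[Chapter 5, Proposition 12]{Kahane} --- and the argument in Kahane is precisely the Cantor-type nested-arc construction you have written out, so you have in effect reconstructed the cited proof.
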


\begin{proof}[Proof of Theorem \ref{thm:sleddvmoa}]
Let $v_n \coloneqq |R_n \star G|^2$ for all $n \geq 1.$  Suppose to the contrary
  \[
      V \coloneqq \lim_{k \to \infty} \sup_{|x|=1} \sum_{n=k}^\infty v_n(x)
  \]
  is not almost surely $0.$  Then as $V$ is tail--measurable, there is a $\delta \in (0,1)$ so that $V > \delta$ \As\ \ By monotonicity, it follows that for all $k$
  \[
    \sup_{|x|=1} \sum_{n=k}^\infty v_n(x) > \delta \quad \As
  \]
  Furthermore, deterministically
  \[
    \lim_{m \to \infty} \sup_{|x|=1} \sum_{n=k}^m v_n(x)
    =\sup_{|x|=1} \sum_{n=k}^\infty v_n(x).
  \]
  By continuity of measure,
  \[
    \lim_{m \to \infty} \Pr\biggl( \sup_{|x|=1} \sum_{n=k}^m v_n(x) > \delta\biggr)
    =\Pr\biggl(\lim_{m \to \infty} \sup_{|x|=1} \sum_{n=k}^m v_n(x) > \delta\biggr) = 1.
  \]
  Thus there is a sequence $m_1 < m_1' < m_2 < m_2^\prime < \dots$ so that if 
  \(
  u_k \coloneqq \sum_{n=m_k}^{m_k'} v_n,
  \)
  then
  \[
    \Pr( \|u_k\|_\infty > \delta) > \delta.
  \]
  By Borel--Cantelli
  \[
    \Pr( \limsup_{k \to \infty} \|u_k\|_\infty > \delta) = 1.
  \]
  Let $\left\{ \theta_k \right\}$ be i.i.d.\ uniform variables on $[0,1]$ which are also independent of $G.$  
  Therefore by conditioning on $G$ and using Proposition \ref{prop:rotations}
  there is almost surely a $t \in [0,1]$ so that
  \[
    \limsup_{k \to \infty} v_k(e(t-\theta_k)) > 0.
  \]
  Because $\left\{ v_n(xe(\theta_k)) \right\}$ has the same distribution as $\left\{ v_n(x) \right\},$
  it follows there is almost surely a $s \in [0,1]$ so that
  \[
    \limsup_{k \to \infty} v_k(e(s)) > 0.
  \]
  Therefore $\RSledd[G]^2 \geq V = \infty$ \As, which concludes the first part of the proof.

  Using Theorem \ref{thm:tailsledd}, Theorem \ref{thm:tailsleddequiv} and Remark \ref{rem:sleddequiv}, the second conclusion follows.
\end{proof}

\section{Sufficient condition for a GAF to be Sledd}.
\label{sec:sufficiency}

In this section we will give a sufficient condition on the coefficients of the GAF to be in $\Sledd.$ We begin with the following preliminary calculation.
\begin{lemma}
  Let $(H_1,H_2)$ be a centered complex Gaussian vector with $\Exp |H_1|^2 =\Exp |H_2|^2 = 1$ and $\bigl|\Exp [H_1 \overline{H_2}]\bigr| = \rho \in [0,1].$  Then for all $|\lambda| < {(1-\rho^2)^{-1/2}},$
  \[
    \Exp e^{ \lambda( |H_1|^2 - |H_2|^2 )} = \frac{1}{1-\lambda^2(1-\rho^2)}.
  \]
  \label{lem:magic}
\end{lemma}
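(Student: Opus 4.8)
The plan is to reduce the identity to a one-dimensional computation by diagonalizing a Hermitian quadratic form in complex Gaussians. First I would normalize the phase: since only $\left|\Exp[H_1\overline{H_2}]\right| = \rho$ is prescribed, replacing $H_2$ by $e^{i\phi}H_2$ for a suitable $\phi$ changes neither $|H_2|^2$ nor any hypothesis, so I may assume $\Exp[H_1\overline{H_2}] = \rho \in [0,1]$. The boundary case $\rho = 1$ is then settled immediately, because $\Exp|H_1-H_2|^2 = 2 - 2\rho = 0$ forces $H_1 = H_2$ a.s., both sides of the claimed identity equal $1$, and every $\lambda$ is admissible. So assume $\rho < 1$.

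Next, since $(H_1,H_2)$ is a proper (circularly symmetric) centered complex Gaussian vector — the only case consistent with the hypotheses, and the case occurring in the applications, where the $H_i$ are linear functionals of the i.i.d.\ standard complex Gaussians $\xi_n$ — its joint law is determined by the Hermitian covariance, so I may realize it as $H_1 = \xi_1$, $H_2 = \rho\xi_1 + \sqrt{1-\rho^2}\,\xi_2$ with $\xi_1,\xi_2$ i.i.d.\ standard complex Gaussian (a quick second-moment check confirms the match). Then
\[
  |H_1|^2 - |H_2|^2 = \xi^* B\,\xi,
  \qquad
  B = \begin{pmatrix} 1-\rho^2 & -\rho\sqrt{1-\rho^2} \\ -\rho\sqrt{1-\rho^2} & -(1-\rho^2) \end{pmatrix},
\]
a Hermitian matrix with $\tr B = 0$ and $\det B = -(1-\rho^2)$, hence with eigenvalues $\pm\sqrt{1-\rho^2}$.

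Finally I would diagonalize $B = UDU^*$ by a unitary $U$, with $D = \sqrt{1-\rho^2}\,\diag(1,-1)$, and set $\eta = U^*\xi$; by unitary invariance of the standard complex Gaussian law, $\eta_1,\eta_2$ are again i.i.d.\ standard complex Gaussian, and $|H_1|^2 - |H_2|^2 = \sqrt{1-\rho^2}\,(|\eta_1|^2 - |\eta_2|^2)$. Since $|\eta_j|^2$ is $\mathrm{Exp}(1)$-distributed with $\Exp e^{t|\eta_j|^2} = (1-t)^{-1}$ for $t<1$, independence gives
\[
  \Exp e^{\lambda(|H_1|^2-|H_2|^2)}
  = \frac{1}{1-\lambda\sqrt{1-\rho^2}}\cdot\frac{1}{1+\lambda\sqrt{1-\rho^2}}
  = \frac{1}{1-\lambda^2(1-\rho^2)},
\]
and both factors on the left are finite exactly when $|\lambda|\sqrt{1-\rho^2}<1$, i.e.\ $|\lambda| < (1-\rho^2)^{-1/2}$, which is the asserted range.

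No step is a genuine obstacle; the only points demanding care are the phase normalization, the remark that uncorrelated jointly-proper complex Gaussians are independent (used to justify the realization and the unitary change of variables), and the bookkeeping of the precise $\lambda$-range. If one prefers to avoid the spectral theorem, the same identity and range drop out of conditioning on $\xi_1$ and completing the square in the Gaussian integral over $\xi_2$; conceptually, the computation is just the instance $A = \diag(1,-1)$ of the general formula $\Exp e^{\lambda Z^*AZ} = \det(I-\lambda\Sigma A)^{-1}$ for a proper complex Gaussian $Z$ with Hermitian covariance $\Sigma$.
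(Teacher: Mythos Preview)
Your proof is correct and follows essentially the same approach as the paper: after the phase normalization you use the identical Cholesky realization $H_1=\xi_1$, $H_2=\rho\xi_1+\sqrt{1-\rho^2}\,\xi_2$ and express $|H_1|^2-|H_2|^2$ as a Hermitian quadratic form in $(\xi_1,\xi_2)$. The only cosmetic difference is that the paper invokes the determinant identity $\Exp e^{\lambda Z^*MZ}=\det(I-\lambda M)^{-1}$ directly, whereas you diagonalize $B$ explicitly and read off the answer from the exponential MGFs---a step you yourself note is equivalent to the determinant formula.
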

\begin{proof}
  We may assume without loss of generality that $\Exp [H_1 \overline{H_2}] = \rho \geq 0.$
  Hence, we may write
  \[
    \begin{pmatrix}
      H_1 \\ 
      H_2
    \end{pmatrix}
    =
    A
    \begin{pmatrix}
      Z_1 \\
      Z_2
    \end{pmatrix}
    \coloneqq
    \begin{pmatrix}
      1 & 0 \\
      \rho & \sqrt{1-\rho^2} \\
    \end{pmatrix}
    \begin{pmatrix}
      Z_1 \\
      Z_2
    \end{pmatrix},
  \]
  where $Z = (Z_1, Z_2)$ are independent standard complex normals, considered as a column vector.  Therefore,
  \[
     |H_1|^2 - |H_2|^2
     =
     Z^* A^*
    \begin{pmatrix}
      1 & 0 \\
      0 & -1 \\
    \end{pmatrix}
     A Z.
  \]
  It follows that
  \[
    \Exp e^{ \lambda( |H_1|^2 - |H_2|^2 )} = \frac{1}{\det(\Id - \lambda A^*
    \left(\begin{smallmatrix}
      1 & 0 \\
      0 & -1 \\
    \end{smallmatrix}\right)
     A) }
     =
     \frac{1}{1-\lambda^2(1-\rho^2)}.
  \]
\end{proof}
We shall apply this equality to the complex Gaussian process $Q_n(\theta) \coloneqq R_n \star G(e(\theta)).$  Then
  \[
    \sigma_n^2 = \Exp | Q_n |^2 
    \quad
    \text{ and define }
    \quad
    \rho_n \coloneqq \rho_n(\theta_1-\theta_2) \coloneqq \sigma_n^{-2} |\Exp[Q_n(\theta_1)\,\overline{ Q_n(\theta_2) }]| \in [0,1].
  \]
  In the case that $\sigma_n^2 = 0,$ we may take any value in $[0,1]$ for $\rho_n.$
  From Lemma \ref{lem:magic}, for any $|\lambda|^2 < (1-\rho_n^2)^{-1}\sigma_n^{-4},$ we have that
  \begin{equation}\label{eq:mgf}
    \Exp \exp\left( \lambda(| Q_n(\theta_1)|^2-| Q_n(\theta_2)|^2) \right)
    =
    \frac{1}{1-\lambda^2(1-\rho_n^2)\sigma_n^4}.
  \end{equation}
  While we would like to use $\sigma_n^4(1-\rho_n^2(\theta_1-\theta_2))$ as a distance, it does not obviously satisfy the triangle inequality, for which reason we introduce:  
\begin{equation}\label{eq:Delta}
  \Delta_n(\theta) \coloneqq \Exp \bigl| |Q_n(\theta)|^2 - |Q_n(0)|^2\bigr|,
\end{equation}
which defines a pseudometric on $[0,1]$ through $\Delta_n(\theta_1,\theta_2) \coloneqq \Delta_n(\theta_1 - \theta_2).$  While $\Delta_n$ may not obviously control the tails of $|Q_n(\theta)|^2$, we observe the following lemma.
\begin{lemma}\label{lem:allgood}
  There is a numerical constant $C > 1$ so that for all choices of $\left\{ a_k \right\}$ and any $n \geq 0$ and all $\theta \in [0,1]$
\[
  \frac{1}{C}\sigma_n^2\sqrt{1-\rho_n^2(\theta)} \leq \Delta_n(\theta) \leq C \sigma_n^2\sqrt{1-\rho_n^2(\theta)}.
\]
\end{lemma}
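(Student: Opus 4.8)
The plan is to strip away the process structure and reduce everything to a single inequality about a pair of correlated standard complex Gaussians, and then prove that inequality by a second‑moment computation combined with an anti‑concentration (Paley--Zygmund) estimate. Fix $n\ge 0$ and $\theta\in[0,1]$. If $\sigma_n^2=0$ then $Q_n\equiv 0$ a.s.\ and both sides of the claim vanish, so assume $\sigma_n^2>0$ and set $H_1\Def Q_n(\theta)/\sigma_n$, $H_2\Def Q_n(0)/\sigma_n$. Then $(H_1,H_2)$ is a centered complex Gaussian vector with $\Exp|H_1|^2=\Exp|H_2|^2=1$ and $|\Exp[H_1\overline{H_2}]|=\rho_n(\theta)\eqqcolon\rho\in[0,1]$, and $\Delta_n(\theta)=\sigma_n^2\,\Exp\bigl||H_1|^2-|H_2|^2\bigr|$. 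Thus the lemma reduces to showing $\Exp\bigl||H_1|^2-|H_2|^2\bigr|\asymp\sqrt{1-\rho^2}$ with absolute constants, uniformly in $\rho\in[0,1]$. Exactly as in the proof of Lemma~\ref{lem:magic}, replacing $H_2$ by a unimodular multiple (which changes neither $|H_2|^2$ nor $\rho$) we may assume $\Exp[H_1\overline{H_2}]=\rho\ge0$ and write $H_1=Z_1$, $H_2=\rho Z_1+\sqrt{1-\rho^2}\,Z_2$ with $Z_1,Z_2$ i.i.d.\ standard complex Gaussian.

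A direct expansion then gives the algebraic identity $|H_1|^2-|H_2|^2=\sqrt{1-\rho^2}\,W$, where $W\Def\sqrt{1-\rho^2}\,(|Z_1|^2-|Z_2|^2)-2\rho\operatorname{Re}(Z_1\overline{Z_2})$. So it suffices to show $\Exp|W|\asymp 1$, uniformly over $\rho\in[0,1]$. The upper bound is immediate from the triangle inequality: $\Exp|W|\le\Exp\bigl||Z_1|^2-|Z_2|^2\bigr|+2\,\Exp|\operatorname{Re}(Z_1\overline{Z_2})|$, which is a finite absolute constant. This already yields $\Delta_n(\theta)\le C\sigma_n^2\sqrt{1-\rho_n^2(\theta)}$.

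For the lower bound, one computes $\Exp W=0$ and $\Exp W^2=2$ \emph{independently of $\rho$}: the cross term $\Exp\bigl[(|Z_1|^2-|Z_2|^2)\operatorname{Re}(Z_1\overline{Z_2})\bigr]$ vanishes because it is a sum of odd‑order Gaussian moments, while $\Var(|Z_1|^2-|Z_2|^2)=2$ and $\Var(\operatorname{Re}(Z_1\overline{Z_2}))=\tfrac12$, so $\Exp W^2=2(1-\rho^2)+4\rho^2\cdot\tfrac12=2$. Moreover $W$ is a polynomial of degree two in a fixed finite collection of real Gaussians with coefficients bounded by an absolute constant (equivalently, $W$ lies in the second Wiener chaos), so hypercontractivity gives $\Exp W^4\le C_1(\Exp W^2)^2=4C_1$ for an absolute $C_1$. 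The reverse Hölder inequality $\Exp|W|\ge(\Exp W^2)^{3/2}/(\Exp W^4)^{1/2}$ (or, equivalently, Paley--Zygmund applied at level $\tfrac12\sqrt{\Exp W^2}$) then yields $\Exp|W|\ge c>0$ for an absolute $c$. Combining the two bounds, $\Exp|W|\asymp1$ uniformly in $\rho$, and the lemma follows with $C$ the larger of the two constants produced above.

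The only genuinely delicate point is the uniform lower bound: the content of the lemma is precisely that, after the natural normalization, $|H_1|^2-|H_2|^2$ does not degenerate as $\rho\to1$, and this is exactly what is captured by the fact that $\Exp W^2$ is constant in $\rho$ while $\Exp W^4$ stays bounded, so the anti‑concentration estimate is uniform. (One could alternatively avoid moment computations entirely: $\rho\mapsto\Exp|W|$ is continuous on the compact interval $[0,1]$ by dominated convergence — $|W|$ is dominated by a fixed integrable random variable — and is strictly positive at each $\rho$ since $W$ is a non‑identically‑zero polynomial in the Gaussians, hence it is bounded below; but the moment route above has the advantage of producing explicit numerical constants.)
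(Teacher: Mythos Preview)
Your proof is correct and follows essentially the same route as the paper: both arguments reduce to comparing $\Exp\bigl||H_1|^2-|H_2|^2\bigr|$ with the square root of $\Exp(|H_1|^2-|H_2|^2)^2$, getting the upper bound from Cauchy--Schwarz and the lower bound from Paley--Zygmund via the fourth moment. The only cosmetic difference is that the paper reads off the second and fourth moments directly from the moment generating function in Lemma~\ref{lem:magic}, whereas you factor $|H_1|^2-|H_2|^2=\sqrt{1-\rho^2}\,W$ and compute $\Exp W^2$ by hand (appealing to hypercontractivity for $\Exp W^4$); either route yields the same conclusion.
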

\begin{proof}
  From \eqref{eq:mgf}, it follows that
  \[
    \begin{aligned}
    &\Exp (|Q_n(\theta)|^2 - |Q_n(0)|^2)^2
    = \sigma_n^4(1-\rho_n^2) \\
    &\Exp (|Q_n(\theta)|^2 - |Q_n(0)|^2)^4
    = 2\sigma_n^8(1-\rho_n^2)^2.
    \end{aligned}
  \]
  Hence by Cauchy--Schwarz,
  \[
    \Delta_n^2(\theta)
    \leq \sigma_n^4(1-\rho_n^2).  
  \]
  On the other hand, by the Paley--Zygmund inequality,
  \[
    (|Q_n(\theta)|^2 - |Q_n(0)|^2)^2
    \geq \frac{1}{100} \sigma_n^4(1-\rho_n^2)
  \]
  with probability at least $1/2$ which gives a lower bound for $\Delta_n$ of the same order.
\end{proof}

We now define two pseudometrics on $[0,1]$ in terms of $\left\{ \Delta_n \right\}.$
\begin{equation}\label{eq:metrics}
  \begin{aligned}
  &d_\infty(\theta_1, \theta_2)
  \coloneqq
  d_\infty(\theta_1-\theta_2)
  \coloneqq
  \sup_{n  \geq 0} \Delta_n(\theta_1-\theta_2), \text{ and } \\
  &d_2^2(\theta_1, \theta_2)
  \coloneqq
  d_2^2(\theta_1-\theta_2)
  \coloneqq
  \sum_{n  \geq 0} \Delta_n^2(\theta_1-\theta_2).
\end{aligned}
\end{equation}
Using Lemma \ref{lem:magic}, we can also now give a tail bound for differences of 
\begin{equation}\label{eq:F}
  F(\theta)
  \coloneqq
  \sum_{n=0}^\infty |Q_n(\theta)|^2.
\end{equation}
\begin{lemma}
  Let $\theta_1,\theta_2 \in [0,1].$  
 There is a numerical constant $C > 1$ so that
  for all $t \geq 0,$
  \[
  \begin{aligned}
    &
    \Pr\left[ F(\theta_1) - F(\theta_2) \geq t \right] \leq \exp\left( 
    -C\min \left\{ 
      \frac{t}{d_\infty(\theta_1-\theta_2)}
      ,
      \frac{t^2}{d_2^2(\theta_1-\theta_2)}
    \right\}
    \right) 
  \end{aligned}
\]
\label{lem:kobe}
\end{lemma}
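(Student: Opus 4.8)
The plan is to prove a Bernstein-type tail bound for the random variable $F(\theta_1)-F(\theta_2) = \sum_{n=0}^\infty (|Q_n(\theta_1)|^2 - |Q_n(\theta_2)|^2)$ by controlling its moment generating function. Write $X_n \coloneqq |Q_n(\theta_1)|^2 - |Q_n(\theta_2)|^2$, so that the $X_n$ are independent (since the GAF has independent coefficients and the kernels $R_n$ have disjoint Fourier support), centered random variables, and $F(\theta_1)-F(\theta_2) = \sum_n X_n$. The idea is: bound $\Exp e^{\lambda X_n}$ using \eqref{eq:mgf}, multiply over $n$, optimize in $\lambda$, and conclude via the exponential Markov inequality.

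First I would invoke Lemma \ref{lem:magic} in the form \eqref{eq:mgf}: for $|\lambda|^2 < (1-\rho_n^2)^{-1}\sigma_n^{-4}$,
\[
  \Exp e^{\lambda X_n} = \frac{1}{1-\lambda^2(1-\rho_n^2)\sigma_n^4}.
\]
Set $b_n^2 \coloneqq (1-\rho_n^2(\theta_1-\theta_2))\sigma_n^4$; by Lemma \ref{lem:allgood}, $b_n \asymp \Delta_n(\theta_1-\theta_2)$, so $\sum_n b_n^2 \asymp d_2^2(\theta_1-\theta_2)$ and $\sup_n b_n \asymp d_\infty(\theta_1-\theta_2)$. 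Using $\frac{1}{1-u} \le e^{2u}$ for $0 \le u \le 1/2$, we get $\Exp e^{\lambda X_n} \le e^{2\lambda^2 b_n^2}$ provided $\lambda^2 b_n^2 \le 1/2$, i.e. provided $|\lambda| \le \frac{1}{\sqrt{2}\,\sup_n b_n} \asymp 1/d_\infty$. Then for such $\lambda$, by independence,
\[
  \Exp e^{\lambda(F(\theta_1)-F(\theta_2))} = \prod_{n=0}^\infty \Exp e^{\lambda X_n} \le \exp\Bigl(2\lambda^2 \sum_{n=0}^\infty b_n^2\Bigr) \le \exp\bigl(C\lambda^2 d_2^2(\theta_1-\theta_2)\bigr).
\]
By Markov, $\Pr[F(\theta_1)-F(\theta_2) \ge t] \le \exp(C\lambda^2 d_2^2 - \lambda t)$, and optimizing: if $t/(C d_2^2) \le c/d_\infty$ take $\lambda \asymp t/d_2^2$ to get the subgaussian bound $\exp(-ct^2/d_2^2)$; otherwise take $\lambda \asymp 1/d_\infty$ (the boundary of the allowed range) to get the subexponential bound $\exp(-ct/d_\infty)$. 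Combining the two regimes gives the stated $\exp\bigl(-C\min\{t/d_\infty,\ t^2/d_2^2\}\bigr)$.

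I expect the main technical point to be handling the constraint $|\lambda|^2 < (1-\rho_n^2)^{-1}\sigma_n^{-4}$ \emph{uniformly in $n$}: one must check that the single value of $\lambda$ chosen in the optimization satisfies this for every $n$, which is exactly why $d_\infty = \sup_n \Delta_n \asymp \sup_n b_n$ enters as the relevant scale and why the subexponential term has $d_\infty$ in the denominator. A minor subtlety is the convergence of the infinite product and the interchange of expectation with the infinite sum, which is justified since $F(\theta_i) < \infty$ a.s.\ (as $G$ is an $\text{H}^2$-GAF, $\sum_n \sigma_n^2 < \infty$) and by monotone convergence applied to partial sums together with the uniform-in-$N$ bound $\Exp e^{\lambda \sum_{n=0}^N X_n} \le \exp(C\lambda^2 d_2^2)$. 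One edge case worth a remark: if $\sigma_n = 0$ for some $n$ then $X_n \equiv 0$ and that term is vacuous, consistent with the convention on $\rho_n$; and if $d_\infty = 0$ then all $\Delta_n = 0$, forcing $F(\theta_1) = F(\theta_2)$ a.s., so the bound holds trivially (interpreting $t/0 = +\infty$).
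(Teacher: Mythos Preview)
Your proposal is correct and follows essentially the same route as the paper: compute the moment generating function via \eqref{eq:mgf}, use $(1-u)^{-1}\le e^{2u}$ on the range $|\lambda|\le c/\sup_n b_n$, and then optimize in $\lambda$ (the paper packages this last step as a separate sub-lemma, but the content is identical). Your explicit remarks on the uniform-in-$n$ constraint, the infinite product/monotone convergence, and the degenerate cases are all sound and fill in details the paper leaves implicit.
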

\begin{proof}

  The desired tail bound follows from estimating the Laplace transform of $F(\theta_1) - F(\theta_2).$  Specifically we use the following estimate: 
\begin{lemma} \label{lem:expintegral}
Suppose that there are $\lambda_0,$ $\sigma >0$ and $X$ a real valued random variable for which  
\[
  \Exp e^{\lambda X} \leq e^{\lambda^2 \sigma^2/2} \quad \text{ for } \lambda^2 \leq \lambda_0^2,
\]
then for all $t \geq 0$
\[
  \Pr\left[  X \geq t \right] \leq \exp\left( 
  -\min\left\{ \frac{\lambda_0 t}{2}, \frac{t^2}{2\sigma^2} \right\}
  \right).
\]
\end{lemma}
\begin{proof}
  Applying Markov's inequality, for any $t \geq 0$ and $0 < \lambda \leq \lambda_0,$
  \[
  \Pr\left[  X \geq t \right] \leq \exp\left( 
  -\lambda t + \lambda^2 \sigma^2/2
  \right).
  \]
  Taking $\lambda = t/\sigma^2,$ if possible, gives one of the bounds.  Otherwise, for $\lambda_0 \leq t/\sigma^2,$ taking $\lambda = \lambda_0$ gives the other bound.
\end{proof}

We return to estimating the Laplace transform of  $F(\theta_1) - F(\theta_2).$
Recalling \eqref{eq:mgf}, for any 
\[
  |\lambda|^2 < \lambda_\star^2 := \inf_{n \in \N} (1-\rho_n^2)^{-1} \sigma_n^{-4} \leq \frac{C^2}{d_\infty(\theta_1-\theta_2)^2},
\]
where $C$ is the numerical constant from Lemma \ref{lem:allgood},
we have
  \begin{equation}\label{eq:baller}
    \Exp \exp\left( \lambda( F(\theta_1) - F(\theta_2)) \right)
    =
    \prod_{n=1}^\infty  
    \frac{1}{1-\lambda^2(1-\rho_n^2)\sigma_n^4}.
  \end{equation}
  Therefore, for all $|\lambda|^2 < \lambda_\star^2/2$
  \begin{equation} \label{eq:smalll}
    \Exp \exp\left( \lambda( F(\theta_1) - F(\theta_2)) \right)
    \leq
    \prod_{n=1}^\infty  
    \frac{1}{1-\lambda^2(1-\rho_n^2)\sigma_n^4}
    \leq 
    \exp\left( 2\lambda^2 \sum_{n=1}^\infty (1-\rho_n^2)\sigma_n^4 \right).
  \end{equation}
  using $(1-x)^{-1} \leq e^{2x}$ for $0 \leq x \leq \tfrac12.$  The desired conclusion now follows from Lemma \ref{lem:allgood} and Lemma \ref{lem:expintegral} .
\end{proof}

  We now recall the technique of Talagrand for controlling the supremum of processes.  We let $T = [0,1].$ Define for any metric $d$ on $T$ and any $\alpha \geq 1$
  \begin{equation}
    \gamma_{\alpha}(d) = \inf \sup_{t \in T} \sum_{k \geq 0} d(t,C_k) 2^{k/\alpha},
    \label{eq:talagrandgamma}
  \end{equation}
  where the infimum is taken over all choices of finite subsets $(C_k)_{k \geq 0}$ of $T$ with cardinality $|C_k| = 2^{2^k}$ for $k \geq 1$ and $|C_0| = 1.$
  \begin{theorem}[{See \cite[Theorem 1.3]{talagrand2001}}]
    \label{thm:talagrand}
    Let $d_\infty$ and $d_2$ be two pseudometrics on $T$ and let $(X_t)_{t \in T}$ be a process so that
    \[
      \Pr\left[ 
      |X_s - X_t| \geq u
      \right] \leq 
      2\exp\left( 
      -\min\left\{ 
	\frac{u}{d_\infty(s,t)},
	\frac{u^2}{d_2^2(s,t)}
      \right\}
      \right).
    \]
    Then there is a universal constant $C>0$ so that
    \[
      \Exp \sup_{s,t \in T} |X_s - X_t| \leq C \left(  \gamma_1(d_\infty) +  \gamma_2(d_2) \right).
    \]
  \end{theorem}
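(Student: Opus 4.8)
The statement is the classical \emph{generic chaining} bound for processes with Bernstein-type (mixed sub-Gaussian/sub-exponential) increments, so the plan is to run Talagrand's chaining argument, paying attention only to how the two functionals $\gamma_1(d_\infty)$ and $\gamma_2(d_2)$ get combined. First I would reduce to $|T|<\infty$: if $T_0\subseteq T$ is finite then $\gamma_1(d_\infty;T_0)\le\gamma_1(d_\infty;T)$ and likewise for $\gamma_2$, so a uniform bound on $\E\sup_{s,t\in T_0}|X_s-X_t|$ over finite $T_0$ passes to $T$ by monotone convergence (in the intended application $X_\theta=F(\theta)$ the relevant supremum is over a continuous process, so there is no measurability issue). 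I would also use the standard equivalence, due to Talagrand, between the net definition of $\gamma_\alpha$ in \eqref{eq:talagrandgamma} and the partition form $\inf_{\mathcal A}\sup_t\sum_{n\ge0}2^{n/\alpha}\operatorname{diam}_d(A_n(t))$, where the infimum runs over admissible sequences of partitions $\mathcal A=(\mathcal A_n)$ with $|\mathcal A_n|\le 2^{2^n}$ and $A_n(t)$ is the cell of $\mathcal A_n$ containing $t$.

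The heart of the argument is producing a \emph{single} admissible partition sequence near-optimal for both metrics at once. I would take admissible $\mathcal A^\infty=(\mathcal A_n^\infty)$ with $\sup_t\sum_n 2^n\operatorname{diam}_{d_\infty}(A_n^\infty(t))\lesssim\gamma_1(d_\infty)$ and $\mathcal A^2=(\mathcal A_n^2)$ with $\sup_t\sum_n 2^{n/2}\operatorname{diam}_{d_2}(A_n^2(t))\lesssim\gamma_2(d_2)$, then form the shifted common refinement $\mathcal B_0=\{T\}$, $\mathcal B_n=\mathcal A_{n-1}^\infty\wedge\mathcal A_{n-1}^2$ for $n\ge1$. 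The cardinality bookkeeping works out exactly: $|\mathcal B_n|\le 2^{2^{n-1}}\cdot 2^{2^{n-1}}=2^{2^n}$, so $\mathcal B$ is admissible, while $B_n(t)\subseteq A_{n-1}^\infty(t)\cap A_{n-1}^2(t)$ gives $\operatorname{diam}_{d_\infty}(B_n(t))\le\operatorname{diam}_{d_\infty}(A_{n-1}^\infty(t))$ and the analogue for $d_2$; hence $\mathcal B$ is simultaneously near-optimal for $\gamma_1(d_\infty)$ and $\gamma_2(d_2)$ up to absolute constants.

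Now fix a base point $t_0$ and a representative $\pi_n(t)\in B_n(t)$ for each cell, with $\pi_0\equiv t_0$; for finite $T$, $B_n(t)=\{t\}$ eventually, so $X_t-X_{t_0}=\sum_{n\ge1}(X_{\pi_n(t)}-X_{\pi_{n-1}(t)})$. Since $\pi_n(t),\pi_{n-1}(t)\in B_{n-1}(t)$, each link has $d_\infty$-length at most $\delta_n^\infty(t):=\operatorname{diam}_{d_\infty}(B_{n-1}(t))$ and $d_2$-length at most $\delta_n^2(t):=\operatorname{diam}_{d_2}(B_{n-1}(t))$, and as $t$ varies there are at most $|\mathcal B_n|\le 2^{2^n}$ distinct links at level $n$. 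Choosing the threshold $u_n(t)=L\,2^n\delta_n^\infty(t)+L\,2^{n/2}\delta_n^2(t)$, the increment hypothesis gives $\Pr[|X_{\pi_n(t)}-X_{\pi_{n-1}(t)}|\ge u_n(t)]\le 2\exp(-\min\{L2^n,L^2 2^n\})=2e^{-L2^n}$ for $L\ge1$, since $u_n/\delta_n^\infty\ge L2^n$ and $u_n^2/(\delta_n^2)^2\ge L^2 2^n$; a union bound over links and levels bounds the bad event by $\sum_{n\ge1}2\cdot 2^{2^n}e^{-L2^n}\lesssim e^{-L}$ for $L$ a large absolute constant. On its complement, for every $t$, $|X_t-X_{t_0}|\le\sum_n u_n(t)\lesssim L\big(\sum_n 2^n\delta_n^\infty(t)+\sum_n 2^{n/2}\delta_n^2(t)\big)\lesssim\gamma_1(d_\infty)+\gamma_2(d_2)$ by near-optimality of $\mathcal B$ (using $\gamma_\alpha(d)\gtrsim\operatorname{diam}_d(T)$ for the $n=1$ term). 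Running this with $L$ replaced by $L+s$ and integrating the tail in $s\ge0$ upgrades the high-probability estimate to $\E\sup_t|X_t-X_{t_0}|\lesssim\gamma_1(d_\infty)+\gamma_2(d_2)$, and finally $\sup_{s,t}|X_s-X_t|\le 2\sup_t|X_t-X_{t_0}|$ yields the claim.

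The step I expect to be the main obstacle is the middle one: producing a single admissible sequence good for both metrics (the common-refinement-with-index-shift trick, resting on the arithmetic $2^{2^{n-1}}\cdot 2^{2^{n-1}}=2^{2^n}$ and on the partition form of $\gamma_\alpha$), together with calibrating the thresholds $u_n(t)$ so that $\min\{u_n/\delta^\infty_n,\,u_n^2/(\delta^2_n)^2\}$ beats $2^n\log 2$ simultaneously against the sub-exponential budget (weighted by $2^n$, matched to $\gamma_1$) and the sub-Gaussian budget (weighted by $2^{n/2}$, matched to $\gamma_2$), keeping the level-$n$ union bound over $2^{2^n}$ links summable.
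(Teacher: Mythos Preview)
The paper does not prove this theorem at all: it is quoted as an external input (with a citation to Talagrand) and immediately applied to obtain Corollary~\ref{cor:finiteness}. So there is no ``paper's own proof'' to compare against.

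That said, your sketch is a correct and complete outline of the standard generic-chaining proof for Bernstein-type increments. The three ingredients you identify are exactly the right ones: (i) the equivalence between the net form \eqref{eq:talagrandgamma} of $\gamma_\alpha$ and the partition form; (ii) the common-refinement-with-index-shift construction $\mathcal B_n=\mathcal A_{n-1}^\infty\wedge\mathcal A_{n-1}^2$, which is admissible precisely because $2^{2^{n-1}}\cdot 2^{2^{n-1}}=2^{2^n}$ and is simultaneously near-optimal for both metrics; and (iii) the mixed threshold $u_n(t)=L\,2^n\delta_n^\infty(t)+L\,2^{n/2}\delta_n^2(t)$, whose two summands are calibrated so that $u_n/\delta_n^\infty\ge L2^n$ and $u_n^2/(\delta_n^2)^2\ge L^2 2^n$, making the level-$n$ union bound over $2^{2^n}$ links summable. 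The reduction to finite $T$, the telescoping along $(\pi_n(t))$, and the tail-integration upgrade to an expectation bound are all routine. Nothing is missing.
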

  Hence, as an immediate corollary of this theorem and of Lemma \ref{lem:kobe}, we have:
  \begin{corollary}\label{cor:finiteness}
    There is a numerical constant $C>0$ so that
    \[
      \Exp \sup_\theta F(\theta)
      \leq C(\gamma_1(d_\infty) + \gamma_2(d_2)) + \sqrt{\textstyle \sum a_n^2} .%
    \]
  \end{corollary}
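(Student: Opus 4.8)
The plan is to read $F$ off as a stochastic process on $T=[0,1]$ via $\theta\mapsto F(\theta)$, feed the increment tail bound of Lemma \ref{lem:kobe} into Talagrand's two--metric chaining estimate (Theorem \ref{thm:talagrand}), and then peel off a single fixed evaluation point. To avoid any a priori measurability or integrability issue for $F$ itself, I would first work with the truncations $F_N(\theta)=\sum_{n=0}^N|Q_n(\theta)|^2$, each a finite trigonometric polynomial in $\theta$ and hence continuous; in particular $\sup_\theta F_N(\theta)$ and $\sup_{\theta_1,\theta_2\in T}|F_N(\theta_1)-F_N(\theta_2)|$ are bona fide random variables (attained on $\Q\cap[0,1]$), and at the very end I would send $N\to\infty$ and invoke monotone convergence, using that $F_N\uparrow F$ pointwise forces $\sup_\theta F_N(\theta)\uparrow\sup_\theta F(\theta)$. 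Fixing $\theta_0=0$ I would then use the trivial bound $\sup_\theta F_N(\theta)\le F_N(0)+\sup_{\theta_1,\theta_2\in T}|F_N(\theta_1)-F_N(\theta_2)|$ and estimate the two terms separately.

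For the increment term: the proof of Lemma \ref{lem:kobe} goes through verbatim with $F$ replaced by $F_N$ (the product in \eqref{eq:baller}, the sum $\sum(1-\rho_n^2)\sigma_n^4$, and $\lambda_\star^2$ only improve under truncation), so $F_N$ obeys the same one--sided tail with the same pseudometrics $d_\infty,d_2$ of \eqref{eq:metrics}. Since $d_\infty,d_2$ are symmetric, applying the bound to $F_N(\theta_1)-F_N(\theta_2)$ and to its negative gives $\Pr[\,|F_N(\theta_1)-F_N(\theta_2)|\ge u\,]\le 2\exp(-C_0\min\{u/d_\infty(\theta_1-\theta_2),\,u^2/d_2^2(\theta_1-\theta_2)\})$, where $C_0>1$ is the constant of Lemma \ref{lem:kobe}. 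To match the hypothesis of Theorem \ref{thm:talagrand} exactly (constant $1$ in the exponent), replace $d_\infty$ by $d_\infty/C_0$ and $d_2$ by $d_2/\sqrt{C_0}$; since $\gamma_1$ and $\gamma_2$ are homogeneous of degree one in the metric (immediate from \eqref{eq:talagrandgamma}), Theorem \ref{thm:talagrand} yields a universal $C_1$ with $\Exp\sup_{\theta_1,\theta_2\in T}|F_N(\theta_1)-F_N(\theta_2)|\le C_1(\gamma_1(d_\infty)+\gamma_2(d_2))$.

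For the fixed point: $Q_n(0)=\sum_{k=2^n}^{2^{n+1}-1}a_k\xi_k$ with the $\xi_k$ independent standard complex Gaussians ($\Exp|\xi_k|^2=1$), so $\Exp|Q_n(0)|^2=\sigma_n^2$ and $\Exp F_N(0)=\sum_{n=0}^N\sigma_n^2\le\sum_{n\ge0}\sigma_n^2=\sum_n a_n^2$. Adding the two estimates, letting $N\to\infty$, and applying monotone convergence gives $\Exp\sup_\theta F(\theta)\le C_1(\gamma_1(d_\infty)+\gamma_2(d_2))+\sum_n a_n^2$. Whenever $\|a\|_{\ell^2}\le1$ — the regime in which the corollary is used, membership in $\Sledd$ being scale invariant — this is exactly the stated bound, since then $\sum_n a_n^2\le\sqrt{\sum_n a_n^2}$; in general one simply reads $\sum_n a_n^2$ in place of $\sqrt{\sum_n a_n^2}$.

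There is no substantial obstacle left: the two hard inputs — the Gaussian--chaos increment tail of Lemma \ref{lem:kobe} and Talagrand's two--metric majorizing estimate of Theorem \ref{thm:talagrand} — are already in hand, so the corollary is their assembly. The only points needing a little care are (i) converting the one--sided tail of Lemma \ref{lem:kobe} into the symmetric two--sided hypothesis of Theorem \ref{thm:talagrand} while moving the constant $C_0$ out of the exponent — handled by the asymmetric rescaling $d_\infty/C_0,\ d_2/\sqrt{C_0}$ together with the homogeneity of $\gamma_1,\gamma_2$ — and (ii) justifying that $\sup_\theta F(\theta)$ (which is exactly $\RSledd[G]^2$) is measurable and arises as the increasing limit of $\sup_\theta F_N(\theta)$, both of which follow from the continuity of each polynomial $F_N$ and monotone convergence.
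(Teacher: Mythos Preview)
Your proposal is correct and follows essentially the same approach as the paper, which records the corollary as an immediate consequence of Lemma~\ref{lem:kobe} and Theorem~\ref{thm:talagrand}; your added care with truncations $F_N$ and the rescaling of metrics to absorb the constant $C_0$ simply makes explicit what the paper leaves implicit. Your observation about the additive term is also right: the natural output of the argument is $\Exp F(0)=\sum_n a_n^2$, not $\sqrt{\sum_n a_n^2}$, and the square root in the displayed statement appears to be a typo (it plays no role downstream, since Lemma~\ref{lem:g2sucks} only uses $\Exp\RSledd[G]^2\lesssim \gamma_1(d_\infty)+\gamma_2(d_2)+\sum\tau_k^2$).
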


  Finally, we give some estimates on the quantities $\gamma_1$ and $\gamma_2$ for the metrics we consider.  We begin with an elementary observation that shows  $\Delta_n(\theta)$ must decay for sufficiently small angles (when $|\theta| \leq 2^{-n}.$)
  \begin{lemma}\label{lem:toosmall}
    There is a numerical constant $C>1$ so that for all $\theta \in [-1,1]$
    \[
      1-\rho_n^2(\theta) \leq C2^{2n} |\theta|^2
      \quad\text{and}\quad
      \Delta_n(\theta) \leq C\sigma_n^22^{n}|\theta|.
    \]
  \end{lemma}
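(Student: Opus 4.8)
The plan is to work with the explicit Gaussian expansion $Q_n(\theta) = \sum_{k=2^n}^{2^{n+1}-1} a_k \xi_k e(k\theta)$, for which $\sigma_n^2 = \sum_k a_k^2$ and, writing $\theta$ in place of $\theta_1-\theta_2$,
\[
  \rho_n(\theta)\,\sigma_n^2 = \Bigl| \sum_{k=2^n}^{2^{n+1}-1} a_k^2\, e(k\theta) \Bigr|.
\]
If $\sigma_n = 0$ then $Q_n \equiv 0$, so $\Delta_n \equiv 0$ and we take $\rho_n \equiv 1$; both bounds are then trivial, so assume $\sigma_n > 0$.

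For the first inequality I would pass to a probabilistic reformulation. Let $\mu_k \coloneqq a_k^2/\sigma_n^2$, a probability vector supported on $\{2^n,\dots,2^{n+1}-1\}$, and let $K$ be a random integer with law $\mu$. Then $\rho_n(\theta) = |\Exp\, e(K\theta)|$, and if $K'$ is an independent copy of $K$,
\[
  2\bigl(1 - \rho_n^2(\theta)\bigr) = 2 - 2\bigl|\Exp\, e(K\theta)\bigr|^2 = \Exp\bigl| e(K\theta) - e(K'\theta)\bigr|^2 .
\]
Since $K, K' \in [2^n, 2^{n+1})$ we have $|K - K'| < 2^n$, so the elementary estimate $|e(x) - e(y)| \le 2\pi|x-y|$ gives $|e(K\theta)-e(K'\theta)| \le 2\pi\,2^n|\theta|$ pointwise, whence $1 - \rho_n^2(\theta) \le 2\pi^2\,2^{2n}|\theta|^2$.

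The second inequality then follows at once from Lemma \ref{lem:allgood}: $\Delta_n(\theta) \le C\sigma_n^2\sqrt{1-\rho_n^2(\theta)} \le C'\sigma_n^2\,2^{n}|\theta|$. (If one wishes to avoid Lemma \ref{lem:allgood}, a direct Cauchy--Schwarz argument also works: factor $|Q_n(\theta)|^2 - |Q_n(0)|^2 = (|Q_n(\theta)| - |Q_n(0)|)(|Q_n(\theta)| + |Q_n(0)|)$, use $\bigl||Q_n(\theta)|-|Q_n(0)|\bigr|\le|Q_n(\theta)-Q_n(0)|$, and combine $\Exp|Q_n(\theta)-Q_n(0)|^2 = \sum_k a_k^2|e(k\theta)-1|^2 \lesssim 2^{2n}|\theta|^2\sigma_n^2$ with $\Exp(|Q_n(\theta)|+|Q_n(0)|)^2 \le 4\sigma_n^2$.)

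There is no real obstacle here; the only point requiring a little care is to route the bound on $1-\rho_n^2$ through the symmetrized quantity $\Exp|e(K\theta)-e(K'\theta)|^2$ rather than trying to bound the characteristic function $|\Exp e(K\theta)|$ from below directly, and to note that $|K-K'| < 2^n$ (the cruder bound $|K-K'| < 2^{n+1}$ only changes the numerical constant). Everything is uniform in $\theta \in [-1,1]$, so no separate treatment of large $\theta$ is needed.
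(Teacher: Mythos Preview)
Your proof is correct. The overall strategy matches the paper's: bound $1-\rho_n^2(\theta)$ by an elementary estimate on the characteristic function, then invoke Lemma~\ref{lem:allgood} for $\Delta_n$. The difference is in the first step. The paper does exactly what you chose to avoid: it bounds $\rho_n$ from below directly via the real part,
\[
  \rho_n(\theta) \;\ge\; \sigma_n^{-2}\sum_{k=2^n}^{2^{n+1}-1} a_k^2 \cos(2\pi k\theta) \;\ge\; 1 - 2\pi^2\, 2^{2(n+1)}\theta^2,
\]
using $\cos x \ge 1 - x^2/2$, and then $1-\rho_n^2 \le 2(1-\rho_n)$. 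Your symmetrization identity $2(1-\rho_n^2) = \Exp|e(K\theta)-e(K'\theta)|^2$ is a clean alternative: it targets $1-\rho_n^2$ directly, exploits $|K-K'| < 2^n$ rather than $K < 2^{n+1}$ (a factor of four in the constant), and sidesteps any implicit case split for $\theta$ so large that the lower bound on $\rho_n$ becomes negative. Neither route is materially harder than the other; the direct real-part bound is perhaps the more obvious one-liner, while yours is slightly sharper. Your optional direct Cauchy--Schwarz argument for $\Delta_n$ is also fine.
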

  \begin{proof}
  We begin by observing that $\rho_n$ can always be bounded by
  \[
    \begin{aligned}
    \rho_n
    &\geq \sigma_n^{-2}\cdot \sum_{k=2^n}^{2^{n+1}-1} |a_k|^2 \cos(2\pi k(\theta)) \\
    &\geq 1 - 2\pi^2 2^{n+2} \theta^2.
  \end{aligned}
  \]
  The proof now follows from Lemma \ref{lem:allgood}.
  \end{proof}

We now show that $\Exp \RSledd[G]^2$ has the desired control.  For any $k \geq 0,$ let
\[
\tau_k^2 = \sup_{n\geq k}\sigma_n^2.
\]
\begin{lemma}\label{lem:g2sucks}
	There is an absolute constant $C>0$ so that
	\[
	\Exp \RSledd[G]^2  \leq C{\textstyle \sum \tau_k^2}.
	\]
\end{lemma}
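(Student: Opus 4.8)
The plan is to estimate $\RSledd[G]^2=\sup_{\theta}F(\theta)$ (with $F$ as in \eqref{eq:F}) by writing $\Exp\sup_{\theta}F(\theta)\le\Exp F(0)+\Exp\sup_{\theta}|F(\theta)-F(0)|$. The first term equals $\sum_n\sigma_n^2=\sum_n a_n^2\le\sum_k\tau_k^2$, and by Theorem~\ref{thm:talagrand} applied with Lemma~\ref{lem:kobe} the second term is $\lesssim\gamma_1(d_\infty)+\gamma_2(d_2)$ (this is essentially Corollary~\ref{cor:finiteness}). So the lemma reduces to $\gamma_1(d_\infty)\lesssim\sum_k\tau_k^2$ and $\gamma_2(d_2)\lesssim\sum_k\tau_k^2$. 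To estimate the two Talagrand functionals I use that $T=[0,1]$ is a circle and that $d_\infty,d_2$ are translation invariant: taking $C_k$ to be $2^{2^{k}}$ equally spaced points gives the crude but sufficient bounds
\[
\gamma_1(d_\infty)\lesssim\sum_{j\ge0}2^{j}\,\omega_\infty(2^{-2^{j}}),\qquad
\gamma_2(d_2)\lesssim\sum_{j\ge0}2^{j/2}\,\omega_2(2^{-2^{j}}),
\]
with $\omega_\infty(\delta)=\sup_{|\theta|\le\delta}d_\infty(\theta)$ and $\omega_2(\delta)=\sup_{|\theta|\le\delta}d_2(\theta)$. By Lemmas~\ref{lem:allgood} and~\ref{lem:toosmall} one has $\Delta_n(\theta)\lesssim\sigma_n^2\min(1,2^{n}|\theta|)$, so $\omega_\infty(\delta)\lesssim\sup_n\sigma_n^2\min(1,2^{n}\delta)$ and $\omega_2(\delta)^2\lesssim\sum_n\sigma_n^4\min(1,2^{2n}\delta^2)$.

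For $\gamma_1(d_\infty)$, put $\delta=2^{-2^{j}}$. The frequencies $n\ge2^{j}$ contribute $\sup_{n\ge2^{j}}\sigma_n^2=\tau_{2^{j}}^2$, while the frequencies $n<2^{j}$ contribute at most the geometric tail $\sum_{\ell\ge1}2^{-\ell}\sigma_{2^{j}-\ell}^2$. After multiplying by $2^{j}$, summing over $j$, and interchanging the order of summation in the second piece, everything is controlled by $\sum_{j}2^{j}\tau_{2^{j}}^2$; and since $\{\tau_m^2\}$ is non--increasing, $2^{j}\tau_{2^{j}}^2\lesssim\sum_{2^{j-1}<m\le2^{j}}\tau_m^2$, whence $\sum_{j}2^{j}\tau_{2^{j}}^2\lesssim\sum_m\tau_m^2$. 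This part is routine.

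The crux is $\gamma_2(d_2)$. For $\delta=2^{-2^{j}}$ the dominant part of $\omega_2(2^{-2^{j}})^2$ is $\sum_{n\ge2^{j}}\sigma_n^4$; the complementary low--frequency part $\sum_{\ell\ge1}2^{-2\ell}\sigma_{2^{j}-\ell}^4$ is, after taking square roots and using $\sqrt{\sum_i b_i}\le\sum_i\sqrt{b_i}$, a geometric tail handled exactly as in the $\gamma_1$ estimate. The temptation is to bound $\sum_{n\ge2^{j}}\sigma_n^4\le\tau_{2^{j}}^2\sum_{n\ge2^{j}}\sigma_n^2$, but this loses a logarithmic factor and the lemma then fails to follow. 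Instead I split $[2^{j},\infty)$ into dyadic blocks $[2^{i},2^{i+1})$, $i\ge j$, and estimate on each block
\[
\sum_{2^{i}\le n<2^{i+1}}\sigma_n^4
\le\Bigl(\max_{2^{i}\le n<2^{i+1}}\sigma_n^2\Bigr)\Bigl(\sum_{2^{i}\le n<2^{i+1}}\sigma_n^2\Bigr)
\le\tau_{2^{i}}^2\cdot 2^{i}\tau_{2^{i}}^2=2^{i}\tau_{2^{i}}^4,
\]
both factors being controlled by monotonicity of $\{\tau_m^2\}$. Using monotonicity once more in the form $2^{i}\tau_{2^{i}}^2\lesssim v_i:=\sum_{2^{i-1}<m\le2^{i}}\tau_m^2$ yields $2^{i}\tau_{2^{i}}^4\lesssim v_i^2\,2^{-i}$, hence $\bigl(\sum_{n\ge2^{j}}\sigma_n^4\bigr)^{1/2}\lesssim\sum_{i\ge j}v_i\,2^{-i/2}$, and interchanging the order of summation,
\[
\sum_{j\ge0}2^{j/2}\sum_{i\ge j}v_i\,2^{-i/2}
=\sum_{i\ge0}v_i\,2^{-i/2}\!\!\sum_{0\le j\le i}\!\!2^{j/2}
\lesssim\sum_{i\ge0}v_i\le\textstyle\sum_m\tau_m^2.
\]
Together with the $\gamma_1$ estimate and the bound on $\Exp F(0)$, this gives $\Exp\RSledd[G]^2\lesssim\sum_k\tau_k^2$.

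I expect the one genuine obstacle to be the $\gamma_2(d_2)$ step: one has to resist bounding $\sum_{n\ge2^{j}}\sigma_n^4$ through the much-too-lossy factor $\sup_{n\ge2^{j}}\sigma_n^2$, and instead organize the sum by dyadic frequency blocks so that the monotonicity of $\{\tau_m^2\}$ can be exploited twice — once to replace a block maximum of $\sigma_n^2$ by $\tau_{2^{i}}^2$, and once to pass from $2^{i}\tau_{2^{i}}^2$ to $\sum_{2^{i-1}<m\le2^{i}}\tau_m^2$. A secondary technical point is recording the elementary inequality $\gamma_\alpha(d)\lesssim\sum_{j}2^{j/\alpha}\omega_d(2^{-2^{j}})$ for translation--invariant metrics on the circle, which is what licenses the reduction to the moduli $\omega_\infty,\omega_2$; everything else (the geometric tails, the Cauchy--condensation estimates for $\{\tau_m^2\}$) is elementary bookkeeping.
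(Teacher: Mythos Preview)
Your proposal is correct and follows essentially the same route as the paper: reduce via Corollary~\ref{cor:finiteness} to bounding $\gamma_1(d_\infty)+\gamma_2(d_2)$, take $C_k$ to be $2^{2^k}$ equally spaced points, feed in the bound $\Delta_n(\theta)\lesssim\sigma_n^2\min(1,2^n|\theta|)$ from Lemmas~\ref{lem:allgood} and~\ref{lem:toosmall}, and finish with Cauchy--condensation bookkeeping exploiting the monotonicity of $\{\tau_m^2\}$. The only difference is organizational: the paper first ``un--condenses'' the outer $j$--sum (replacing $\sum_j 2^{j}(\cdot)_{2^j}$ by $\sum_k(\cdot)_k$) and then, for the $\gamma_2$ tail $\sum_{n\ge k}\tau_n^4$, applies a second Cauchy condensation together with $\sqrt{\sum}\le\sum\sqrt{\,\cdot\,}$; you instead keep the doubly--dyadic structure and bound $\sum_{2^i\le n<2^{i+1}}\sigma_n^4\le 2^i\tau_{2^i}^4$ block by block, which is arguably a little more transparent but amounts to the same estimate.
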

\noindent This lemma %
 proves Theorem \ref{thm:bettersledd}.
  \begin{proof}
  	From Corollary \ref{cor:finiteness},
  	\[
  	\Exp \RSledd[G]^2 \lesssim \gamma_1(d_\infty) + \gamma_2(d_2) + {\textstyle \sum \tau_k^2}.
  	\]
  We will choose $C_k$ to be the dyadic net $\left\{ \ell 2^{-2^k} : 1 \leq \ell \leq 2^{2^k} \right\}.$  Then using Lemma \ref{lem:toosmall} it follows that for any $t \in [0,1]$
  \begin{equation}\label{eq:ds}
    \begin{aligned}
      &d_\infty(t,C_k) = d_\infty(2^{-2^k}) \lesssim \sup_{n \geq 0} \left\{ (1-\rho_n^2(2^{-2^k}))\sigma_n^2 \right\} 
      \lesssim \sup_{n \geq 0}\left\{ 2^{-(n-2^k)_{-}}\sigma_n^2 \right\},\\
      &d_2^2(t,C_k) = d_2^2(2^{-2^k}) \lesssim \sum_{n=0}^{\infty} (1-\rho_n^2(2^{-2^k}))^2\sigma_n^4
      \lesssim \sum_{n=0}^\infty\left\{ 2^{-2(n-2^k)_{-}}\sigma_n^4 \right\}.
  \end{aligned}
  \end{equation}
  In the previous equations, $x_{-} \coloneqq -\min\{x,0\}.$ 
  
 This leads to an estimate on $\gamma_1$ of 
 \begin{equation}\label{eq:gamma1}
 \gamma_1(d_1)
 \leq \sum_{k=0}^\infty \left\{\sup_{n \geq 0}\left\{ 2^{-(n-2^k)_{-}}\sigma_n^2 \right\}\right\}\cdot 2^k
 \end{equation}
 and an estimate
 \begin{equation}\label{eq:gamma2}
 \gamma_2(d_2)
 \leq \sum_{k=0}^\infty \sqrt{\left\{\sum_{n \geq 0}\left\{ 2^{-2(n-2^k)_{-}}\sigma_n^4 \right\}\right\}} \cdot 2^{k/2}.
 \end{equation}
 
 We show that
 \begin{equation}\label{eq:g1g2}
 \gamma_1(d_1) + \gamma_2(d_2) 
 \lesssim \sum_{k=0}^\infty \tau_k^2.
 \end{equation}
 To control $\gamma_1(d_1),$ we 
 begin by applying Cauchy condensation
 \begin{equation}\label{eq:g11}
 \gamma_1(d_1)
 \lesssim \sum_{k=0}^\infty \left\{\sup_{n \geq 0}\left\{ 2^{-(n-k)_{-}}\sigma_n^2 \right\}\right\}.
 \end{equation}
 We then estimate
 \[
 \sup_{n \geq 0}\left\{ 2^{-(n-k)_{-}}\sigma_n^2 \right\}
 \leq
 \sum_{n=0}^{k} \{ 2^{n-k} \sigma_n^2 \}
 +
 \tau_{k}^2.
 \]
 Applying this bound, and changing the order of summation for the first,
 it follows that $\gamma_1(d_1) \lesssim \sum_k \tau_k^2.$
 
 To control $\gamma_2(d_2),$ we again begin by applying Cauchy condensation which results in 
 \begin{equation}\label{eq:g22}
 \gamma_2(d_2)
 \leq \sum_{k=0}^\infty \sqrt{\biggl\{\sum_{n \geq 0}\left\{ 2^{-2(n-k)_{-}}\sigma_n^4 \right\}\biggr\}\cdot \frac{1}{k}}.
 \end{equation}
 We then split the sum
 \begin{equation}\label{eq:g23}
 \gamma_2(d_2)
 \leq 
 \sum_{k=0}^\infty \sqrt{\biggl\{\sum_{0 \leq n \leq k}\left\{ 2^{2(n-k)}\tau_n^4 \right\}\biggr\}\cdot \frac{1}{k}}
 +\sum_{k=0}^\infty \sqrt{\biggl\{\sum_{n \geq k}\tau_n^4 \biggr\}\cdot \frac{1}{k}}.
 \end{equation}
 To the first term, we apply Jensen's inequality, which produces
 \[
 \sum_{k=0}^\infty \sqrt{\biggl\{\sum_{0 \leq n \leq k}\left\{ 2^{2(n-k)}\tau_n^4 \right\}\biggr\}\cdot \frac{1}{k}}
 \lesssim
 \sum_{k=0}^\infty 
 \sqrt{\frac{1}{k}}
 \cdot
 \biggl\{\sum_{0 \leq n \leq k}\left\{ 2^{2(n-k)}\tau_n^2 \right\}\biggr\}
 \lesssim
 \sum_{n=0}^\infty \tau_n^2,
 \]
 where the second sum follows from changing the order of summation.
 To the second term in \eqref{eq:g23}, we again apply Cauchy condensation
 \[
 \sum_{k=0}^\infty \sqrt{
 	\sum_{n \geq k}\tau_n^4 
 	\cdot 
 	\frac{1}{k}}
 \lesssim
 \sum_{k=0}^\infty \sqrt{\biggl\{\sum_{j \geq k}\tau_{2^j}^4 \cdot 2^j \biggr\}} \cdot 2^{k/2}
 \lesssim
 \sum_{k=0}^\infty \biggl\{\sum_{j \geq k}\tau_{2^j}^2 \cdot 2^{j/2} \biggr\} \cdot 2^{k/2}
 \lesssim
 \sum_{j=0}^\infty \biggl\{\tau_{2^j}^2 \cdot 2^{j} \biggr\},
 \]
 where the penultimate inequality follows from subadditivity of the $\sqrt{\cdot}$ and the final inequality follows by changing order of summation.  From another application Cauchy condensation, \eqref{eq:g1g2} follows.
  \end{proof}

We remark that sequences for which $\sum_{k=0}^\infty \tau_k^2=\infty$ but are square summable necessarily have some amount of lacunary behavior:
\begin{lemma}
  Suppose $\sum_{k=0}^\infty \tau_k^2 = \infty$ but $\sum_{n=0}^\infty \sigma_n^2 < \infty$, then for any $C>1$ there is a sequence $\left\{ j_k \right\}$ tending to infinity with $j_{k+1}/j_k > C$ for all $k$ so that
  \[
    \sum_{k=1}^\infty \sigma_{j_k}^2 \cdot j_k = \infty.
  \]
  \label{lem:lac}
\end{lemma}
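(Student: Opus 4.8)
The plan is to recast the hypothesis $\sum_k\tau_k^2=\infty$ in terms of the records of $\{\sigma_n^2\}$, and then to extract a lacunary subsequence of the record indices that still carries infinite weight.

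First I would note the elementary facts that $\{\tau_k^2\}$ is nonincreasing, that $\tau_k^2\le\sum_{n\ge k}\sigma_n^2\to 0$, and hence that $\{\tau_k^2\}$ is not eventually constant (such a sequence would converge to that constant, hence to $0$ here, forcing $\sum_k\tau_k^2<\infty$). Thus the set of strict-decrease points $R=\{\,r\ge 1:\tau_r^2>\tau_{r+1}^2\,\}$ is infinite, say $R=\{r_1<r_2<\cdots\}$; at each $r\in R$ the supremum defining $\tau_r^2$ is attained at $n=r$, so $\tau_r^2=\sigma_r^2$, the quantities $\beta_i:=\sigma_{r_i}^2=\tau_{r_i}^2$ are nonincreasing in $i$, and $\sum_i\beta_i\le\sum_n\sigma_n^2<\infty$. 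Since $\tau_n^2$ is constant on each block $r_i<n\le r_{i+1}$ with value $\sigma_{r_{i+1}}^2=\beta_{i+1}$, summing these blocks gives
\[
  \infty \;=\; \sum_{n\ge 1}\tau_n^2 \;=\; \sum_{n=1}^{r_1}\tau_n^2 \;+\; \sum_{i\ge 2}(r_i-r_{i-1})\,\beta_i ,
\]
and as the first sum is finite, writing $w_i:=(r_i-r_{i-1})\beta_i$ we obtain $\sum_{i\ge 2}w_i=\infty$ (each $w_i>0$, since otherwise $\tau^2$ would vanish identically from some point on, contradicting $\sum_k\tau_k^2=\infty$).

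Next I would group the record indices into scale-$C$ blocks $J_m:=\{\,i\ge 2:C^m\le r_i<C^{m+1}\,\}$, $m\ge 0$, which partition $\{i\ge 2\}$ into consecutive index-blocks whose weights $\mathcal M_m:=\sum_{i\in J_m}w_i$ satisfy $\sum_m\mathcal M_m=\infty$; hence one of the two parity classes of block-indices, call it $P$, has $\sum_{m\in P}\mathcal M_m=\infty$. For each $m\in P$ with $J_m\ne\emptyset$ I would keep the first record of the block, $i^*(m):=\min J_m$, using that $r_{i^*(m)}\ge C^m$, that $\beta_{i^*(m)}=\max_{i\in J_m}\beta_i$ (as $\beta$ is nonincreasing), and that a telescoping estimate gives $\mathcal M_m\le\beta_{i^*(m)}\sum_{i\in J_m}(r_i-r_{i-1})<\beta_{i^*(m)}\,C^{m+1}$, so that $r_{i^*(m)}\beta_{i^*(m)}\ge C^m\beta_{i^*(m)}\ge\mathcal M_m/C$. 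Listing the kept record values in increasing order as $\{j_k\}$: consecutive ones lie in blocks of $P$ whose indices differ by at least $2$, so if $j_k$ lies in block $m$ then $j_{k+1}\ge C^{m+2}$ whereas $j_k<C^{m+1}$, which forces $j_{k+1}/j_k>C$; moreover $j_k\to\infty$, and
\[
  \sum_k\sigma_{j_k}^2\,j_k \;=\; \sum_m r_{i^*(m)}\,\beta_{i^*(m)} \;\ge\; \tfrac1C\sum_{m\in P}\mathcal M_m \;=\; \infty ,
\]
as required.

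The step demanding the most care is this extraction: since $j_{k+1}/j_k>C$ permits at most one index from each block $[C^m,C^{m+1})$, almost all records must be discarded and one must still keep infinitely much weight. It works because the whole block weight $\mathcal M_m$ is controlled by $\beta_{i^*(m)}$ — the largest value of $\sigma^2$ in the block, which sits at the first index since $\beta$ is nonincreasing — times the block length $\lesssim C^{m+1}$, whereas keeping the single index $r_{i^*(m)}\ge C^m$ already recovers the fixed fraction $1/C$ of $\mathcal M_m$; restricting to one parity class of blocks then makes the kept indices genuinely $C$-separated. Beyond this bookkeeping I anticipate no real difficulty.
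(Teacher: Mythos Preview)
Your proof is correct and follows essentially the same line as the paper's: both isolate the strict-decrease points of $\tau_n^2$ (where $\tau_n^2=\sigma_n^2$), bound the full sum $\sum\tau_n^2$ by a sum over these record indices weighted by their positions, group the records into geometric blocks, keep one representative per block, and finally pass to alternate blocks to force the ratio condition $j_{k+1}/j_k>C$. The only organizational difference is that the paper packages the geometric blocking via Cauchy condensation with base $m>C$ (first condensing $\sum_k\tau_k^2$ to $\sum_j\tau_{m^j}^2 m^j$, then taking records of the condensed sequence), whereas you take records first and block afterward; the underlying argument and the parity-thinning trick are the same.
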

\begin{proof}
  Using Cauchy condensation, we have that for any $m \in \N$ with $m > 1$
  \[
    \sum_{j=1}^\infty \tau_{m^j}^2 \cdot m^j
    =
    \infty
    =
    \sum_{k=0}^\infty \tau_k^2.
  \]
  Let $\left\{ j_k^* \right\}$ be the subsequence of $\{m^j\}$ at which
  \(
  \tau_{m^j} > \tau_{m^{j+1}}.
  \)
  Picking $j_k$ as an $\ell$ in $[j_k^*,m j_k^*)$ that maximizes $\sigma_\ell^2$ produces the desired result, after possibly passing to the subsequence $\left\{ j_{2k} \right\}$ or $\left\{ j_{2k+1} \right\}$.
\end{proof}

\section{Exceptional GAFs}
\label{sec:exceptional}
In this section, we construct GAFs with exceptional properties.  In particular we show the strict inclusions in \eqref{eq:strict}.

\subsection{ $\text{H}^2$--Bloch GAFs are not always BMO GAFs }
Both lacunary and regularly varying $\text{H}^2$--GAFs are VMOA. 
\cite[Theorem 3.5]{sledd1981random} constructs an example of an $\text{H}^2$ random series that is not Bloch, and so not BMOA.
This leaves open the possibility that once an $\text{H}^2$--GAF is Bloch, it additionally is BMO.
We give an example that shows there are $\text{H}^2$--GAFs that are Bloch but not BMO.

Recall \eqref{eq:MI} that for an interval $I \subset [0,1]$ any $p \geq 1$ and any $\text{L}^p(\T)$ function $p,$  
\begin{equation*}
  M_{I}^p(f) \coloneqq \fint_{I}  | f(e(\theta)) - \textstyle{\fint_I }f|^p\,d\theta,
  \qquad
  \text{where}
  \quad
  \fint_I f(x)\,dx \coloneqq
  \frac{1}{|I|} \int_{I} f(x)\,dx.
\end{equation*}

\begin{lemma}
For every $R>0$, there exists $n_0=n_0(R)$ such that for any $n > n_0$ there is a polynomial
\(
f(z)\coloneqq\sum_{k=n}^{m}a_{k}\xi_{k}z^{k},
\)
with the following properties:
\begin{enumerate}[\textnormal{(}i\textnormal{)}]
	\item $\sum_k a_{k}^{2}=1.$
	\item $\Exp \starnorm[f] \geq R$
	\item  \(
	\Exp \Blochnorm[f] \leq C,
	\)
	where $C > 0$ is an absolute constant.
\end{enumerate}
\label{lem:gady}
\end{lemma}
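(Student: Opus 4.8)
I would prove this by an explicit construction together with two tail estimates, one for each of (ii) and (iii). The design constraints are rigid and essentially dictate the shape of the example: by Theorem~\ref{thm:sledd}, Remark~\ref{rem:sleddequiv} and Lemma~\ref{lem:g2sucks} we have $\Exp\starnorm[f]^2\lesssim \sum_k\sup_{n\ge k}\sigma_n^2$, so to make $\Exp\starnorm[f]\ge R$ we are forced to choose the coefficients so that $\sum_k\sup_{n\ge k}\sigma_n^2\gtrsim R^2$; on the other hand, controlling $\Exp\Blochnorm[f]$ forces $\sqrt n\,\sigma_n$ to stay bounded (the Bloch norm of a single dyadic block of variance $\sigma_n^2$ is, by a Salem--Zygmund/Gaussian supremum estimate, of order $\sigma_n\sqrt n$). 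These are reconciled only if the mass $\sum_k a_k^2=1$ is spread over a frequency window $[n,m]$ of \emph{enormous} multiplicative length, $\log(m/n)\asymp R^2$ (this is why $m$ is allowed to be arbitrarily large and why one needs $n>n_0(R)$), with the dyadic-block variances arranged in a carefully tuned non-monotone profile of size $\asymp 1/j$ on a suitably spaced subfamily of scales $\{q_i\}$ (spacing $\asymp R^2$ between consecutive scales, and the within-block distribution chosen to make the relevant supremum genuinely attain its upper bound). I would first write down such a $\{a_k\}$ explicitly and record the elementary facts that $\sum a_k^2=1$, $\sqrt j\,\sigma_j\lesssim 1$ for all $j$, and $\sum_k\sup_{n\ge k}\sigma_n^2\asymp R^2$.

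\textbf{The Bloch bound (iii).} Decompose $f=\sum_i f_{q_i}$ into its dyadic blocks. Each $f_{q_i}$ is a Gaussian trigonometric polynomial, and a chaining (Dudley) estimate for the Gaussian field $z\mapsto (1-|z|^2)f_{q_i}'(z)$ on the disk gives $\Exp\Blochnorm[f_{q_i}]\lesssim \sigma_{q_i}\sqrt{q_i}\lesssim 1$, with Gaussian concentration (Proposition~\ref{prop:glip}) around this value. Since block $q_i$ is radially localized --- its contribution to $(1-|z|^2)|f'(z)|$ is exponentially small once $|z|$ leaves the annulus $1-|z|\asymp 2^{-q_i}$, and the scales $q_i$ are separated by $\asymp R^2\ge 2$ --- the Bloch norms of the blocks essentially do not accumulate: summing the per-block estimates and paying only the concentration cost of taking a supremum over the (many, but that is harmless after concentration, because $\log(\#\text{scales})\lesssim N_0$ once $n_0(R)$ is chosen large enough) radial scales yields $\Exp\Blochnorm[f]\lesssim \sup_i \sigma_{q_i}\sqrt{q_i}\le C$. (This is essentially Gao's Bloch criterion specialized to $f$.)

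\textbf{The BMO lower bound (ii) --- the main obstacle.} This is the hard part, because $\Sledd\subsetneq\BMOA$ strictly, so the large value of the Littlewood--Paley square function $\sum_n|T_n\star f(x)|^2$ that the construction produces cannot simply be quoted as a lower bound for $\starnorm[f]$; one must exhibit the oscillation directly. I would produce, with probability bounded below, an interval $I$ with $M_I(f)\gtrsim R$ (equivalently, using the Carleson-measure/John--Nirenberg form of the $\starnorm$ norm, a Carleson box over which $\frac1{|I|}\int_{Q(I)}|f'|^2(1-|z|^2)\,dA\gtrsim R^2$). The mechanism is multiscale: one runs a stopping-time/greedy selection over dyadic arcs, at each of the $\asymp R^2$ relevant scales choosing the sub-arc on which the corresponding block has anomalously large modulus (the measure of the favourable set shrinks by the exact factor that the next, finer, block's resolution recovers, so the selection can be iterated across all scales), and shows that the resulting nested intersection is a single arc $I$ on which $f$ genuinely oscillates by $\gtrsim R$ rather than merely being large in $L^2$ --- i.e.\ one must control $\fint_I f$ against $\fint_I|f|^2$, which is where the delicate choice of scale-spacing and within-block profile enters. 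An alternative, possibly cleaner, route is through the duality $\BMOA=(\mathrm H^1)^*$: test $f$ against a random $\mathrm H^1$ function built from the realized coefficients (a product of two $\mathrm H^2$ functions adapted to the block phases), and bound its $\mathrm H^1$ norm from above while bounding the pairing from below by $\asymp R$. Either way the crux is the matching lower bound: showing that the supremum defining $\starnorm$ actually attains the order predicted by the $\gamma_1,\gamma_2$-functional computation underlying Corollary~\ref{cor:finiteness}, which for a Gaussian series with uniformly spread energy is the genuinely non-trivial point.
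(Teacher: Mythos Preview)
Your identification of the BMO lower bound (ii) as ``the main obstacle'' is correct, and it is exactly where your proposal has a genuine gap. Both routes you sketch --- the multiscale stopping-time/greedy selection and the $(\mathrm H^1)^*$-duality argument --- remain at the level of wish rather than mechanism. The stopping-time selection you describe would produce an arc on which the square function $\sum_n |R_n\star f(x)|^2$ is large; but that is the Sledd quantity $\RSledd[f]^2$, and since the only general inequality available runs the other way ($\starnorm \lesssim \TSledd$, Theorem~\ref{thm:sledd}), this does not by itself force $\starnorm[f]$ to be large. You acknowledge the issue (``one must control $\fint_I f$ against $\fint_I |f|^2$''), but you do not say how; likewise the duality test function is never specified. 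Your opening diagnosis is also slightly backward: the bound $\Exp\starnorm[f]^2\lesssim\sum_k\sup_{n\ge k}\sigma_n^2$ says that a large Sledd sum is \emph{necessary} for large $\starnorm$, not sufficient --- it constrains the construction but cannot drive the lower bound.

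The paper sidesteps this difficulty by an arithmetic device you do not anticipate. It places $r^2$ equal-weight coefficients ($a_k=1/r$, so (i) holds) at frequencies $\lfloor n\lambda_{i,j}\rfloor$ with $\lambda_{i,j}\in[2^i,2^i+4^{-r}]$ for $i,j\in\{1,\dots,r\}$, the $\lambda_{i,j}$ chosen \emph{rationally independent}. Kronecker's theorem then guarantees, for every sign pattern $\omega\in\{0,\tfrac12\}^{r\times r}$, an integer $m=m(\omega)$ with $\{m\lambda_{i,j}\}\approx\omega_{i,j}$. Choosing $\omega_{i,j}$ according to the sign of $\Re\zeta_{i,j}$ and looking on the interval $I$ of length $1/n$ centred at $m(\omega)/n$, one finds (up to a negligible error)
\[
\Re f\bigl(e(m(\omega)/n+\theta)\bigr)\approx \sum_{i=1}^r \Xi_i\cos(2\pi\cdot 2^i n\theta),\qquad \Xi_i=\frac1r\sum_{j=1}^r|\Re\zeta_{i,j}|.
\]
This is a \emph{lacunary} cosine series, $1/n$-periodic, so its $L^2$-oscillation over $I$ is exactly $(\tfrac12\sum_i\Xi_i^2)^{1/2}\asymp\sqrt r$: no stopping time, no duality, just Parseval on a single interval. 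The same $j$-redundancy (averaging $r$ independent Gaussians at each lacunary scale) also makes the Bloch bound a one-line computation: $(1-|z|)|f'(z)|\le 2\max_i\Theta_i$ with $\Theta_i=\tfrac1r\sum_j|\zeta_{i,j}|$, and $\sup_r\Exp\max_{i\le r}\Theta_i<\infty$ by concentration. The missing idea in your proposal is precisely this number-theoretic phase alignment, which converts ``exhibit BMO oscillation'' into the elementary computation of the $L^2$ norm of a lacunary polynomial on its period.
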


We can then use this lemma to construct the desired GAF.
\begin{theorem}
  There exists an $\text{H}^2$, Bloch, non--BMOA GAF.
  \label{thm:gady}
\end{theorem}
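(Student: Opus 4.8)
The plan is to build $G$ as a superposition of independent, suitably rescaled copies of the polynomials furnished by Lemma \ref{lem:gady}, placed on widely separated dyadic frequency bands, and then to read off the three required properties band by band. Fix $R_j = 4^j$ and $c_j = 2^{-j}$. I would choose recursively integers $p_1 < q_1 < p_2 < q_2 < \cdots$ as follows: having chosen $q_{j-1}$, pick $p_j$ so large that $p_j \ge q_{j-1}+3$ and $2^{p_j+1} > n_0(R_j)$, apply Lemma \ref{lem:gady} with $R = R_j$ and $n = 2^{p_j+1}$ to obtain a polynomial $f_j(z) = \sum_k a^{(j)}_k \xi^{(j)}_k z^k$ whose frequency support lies in $[2^{p_j+1}, m_j]$ with $\sum_k (a^{(j)}_k)^2 = 1$, $\Exp\starnorm[f_j] \ge R_j$ and $\Exp\Blochnorm[f_j] \le C$, and then set $q_j = \max(\lceil \log_2 m_j\rceil + 1, p_j + 2)$, so the Fourier support of $f_j$ lies in $[2^{p_j+1}, 2^{q_j-1}]$. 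Using independent coefficient vectors across $j$ (these sit on disjoint frequency ranges anyway), put $G = \sum_{j\ge 1} c_j f_j$. Since the blocks occupy disjoint frequency ranges, the Taylor coefficients of $G$ are square summable with total square $\sum_j c_j^2 = \tfrac13 < \infty$, so $G$ is an $\text{H}^2$--GAF, and $\Exp\|G\|_{\text{H}^2}^2 = \sum_j c_j^2 < \infty$ gives $G \in \text{H}^2$ almost surely.

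For the Bloch property, subadditivity of $\Blochnorm$ and monotone convergence would give $\Exp\Blochnorm[G] \le \sum_j c_j\,\Exp\Blochnorm[f_j] \le C\sum_j c_j < \infty$, hence $\Blochnorm[G] < \infty$ almost surely. For the failure of $\BMOA$, the key point is that a band-pass Fourier multiplier assembled from the kernels $\{T_n\}$ is uniformly bounded on $\BMOA$: for $p \le q$ set $\Pi_{p,q} = V_{p,q}\star\,\cdot$ with $V_{p,q} = \sum_{n=p}^q T_n$. As in the proof of Claim \ref{claim:approximable}, the telescoping structure of \eqref{eq:trapezoid} shows $V_{p,q}$ is a sum of a number of Fej\'er kernels bounded independently of $p,q$, so $\|V_{p,q}\|_1 \le C_0$ for an absolute $C_0$; since $\starnorm$ is invariant under rotations, convolution yields $\starnorm[V_{p,q}\star F] \le C_0\,\starnorm[F]$. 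Because $\widehat{T_n}$ is supported in $[2^{n-1},2^{n+2})$ and $\sum_n \widehat{T_n} = 1$ on $\{K\ge 0\}$, one has $\widehat{V_{p,q}}(K) = 1$ on $[2^{p+1},2^{q-1}]$ and $\widehat{V_{p,q}}(K) = 0$ off $[2^{p-1},2^{q+2})$. With the separation $p_{j+1} \ge q_j+3$, the band $[2^{p_j-1},2^{q_j+2})$ is disjoint from the Fourier support of $f_i$ for every $i \ne j$ while containing that of $f_j$, so $\Pi_{p_j,q_j} G = c_j f_j$ exactly. Hence, almost surely, $\starnorm[G] \ge C_0^{-1}\starnorm[\Pi_{p_j,q_j}G] = C_0^{-1} c_j \starnorm[f_j]$ for every $j$, so $\Exp\starnorm[G] \ge C_0^{-1} c_j R_j = C_0^{-1} 2^{j} \to \infty$, i.e.\ $\Exp\starnorm[G] = \infty$. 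By Corollary \ref{cor:expisenough} together with the $0$--$1$ law (Proposition \ref{prop:01}), $\starnorm[G] = \infty$ almost surely, so $G \notin \BMOA$ almost surely.

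The only genuinely delicate point is the frequency bookkeeping that makes $\Pi_{p_j,q_j}$ an exact projection onto the $j$-th block while remaining uniformly bounded on $\BMOA$; everything else is routine. The substantive analytic content — producing polynomials with large $\starnorm$ but bounded $\Blochnorm$ at arbitrarily high frequencies — has been isolated in Lemma \ref{lem:gady}, so the remaining work is just checking that these exceptional contributions survive the superposition because they live on separated scales, and upgrading the expectation bounds to almost-sure statements via Corollary \ref{cor:expisenough} and the zero--one law.
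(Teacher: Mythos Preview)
Your proof is correct and follows the paper's overall strategy: superpose independent rescaled copies of the Lemma~\ref{lem:gady} polynomials with $\ell^1$ weights, read off the $\text{H}^2$ and Bloch properties by subadditivity, and conclude $\Exp\starnorm[G]=\infty$ to rule out $\BMOA$ via Corollary~\ref{cor:expisenough}. The one real difference is how you extract the $j$-th block in the non-$\BMOA$ step. The paper does this in one line via the contraction principle (Proposition~\ref{prop:contraction}), obtaining $\Exp\starnorm[f]\ge\beta_j\,\Exp\starnorm[f_j]$ directly, with no Fourier-side bookkeeping required. You instead build an explicit band-pass kernel $V_{p_j,q_j}$ with uniformly bounded $L^1$ norm (via the telescoping in \eqref{eq:trapezoid}, as in Claim~\ref{claim:approximable}) and use the convolution inequality $\starnorm[V\star G]\le\|V\|_1\,\starnorm[G]$; this forces you to separate successive bands by at least three dyadic scales so that the projection is exact, but in return gives the pointwise (not merely in-expectation) bound $\starnorm[G]\ge C_0^{-1}c_j\,\starnorm[f_j]$. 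Both routes are valid; the paper's is shorter and more probabilistic, while yours is more hands-on harmonic analysis and avoids invoking the contraction machinery.
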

\begin{proof}
  Let $\left\{ \beta_i \right\}$ and $\left\{ R_i \right\}$ be two positive sequences with $\left\{ \beta_i \right\} \in \ell_1$  and $\beta_i R_i \to \infty.$
  Let $f_i$ be a sequence of independent polynomials given by Lemma \ref{lem:gady} having
  \[
    \Exp \starnorm[f] \geq R_i
    \quad
    \text{and}
    \quad
    \Exp \Blochnorm[f] \leq C.
  \]
  The function $f = \sum_i \beta_i f_i$ satisfies for all $\theta \in [0,1]$
  \[
    \Exp |f(e(\theta))|^2 = \sum_{i=1}^\infty \beta_i^2 < \infty,
  \]
  and so $f$ is in $\text{L}^2.$
  The Bloch norm satisfies
  \[
    \Exp \Blochnorm[f] \leq \sum_{i=1}^\infty \Exp \beta_i\Blochnorm[f_i] < \infty.
  \]
  Finally, by the contraction principle (Proposition \ref{prop:contraction})
  \[
    \Exp \starnorm[f] \geq \beta_i \Exp \starnorm[f_i] \geq \beta_i R_i \to \infty,
  \]
  as $i \to \infty.$  Therefore $\starnorm[f] = \infty$ a.s.\ by Corollary \ref{cor:expisenough}.
\end{proof}

\subsection*{Proof of Lemma \ref{lem:gady}}

\subsubsection*{Construction of $f$}
Let $r\in\mathbb{N}$ be some parameter to be fixed later (sufficiently
large). Let $\{\lambda_{i,j}:i,j\in \{ 1,2, \dots, r\} \} \cup \{1\}$ be real numbers that are
 independent over the rationals and that satisfy
 \begin{equation}\label{eq:lambdarange}
\lambda_{i,j}\in[2^{i},2^{i}+4^{-r}].
\end{equation}
Independence gives that for every $\omega\in\{0,\frac{1}{2}\}^{r \times r}$
there is an $m=m(\omega)$ such that 
\begin{equation}\label{eq:mdef}
  |\left\{ m\lambda_{i,j}\right\} -\omega_{i,j}|\le 4^{-r}\qquad \text{ for all } \quad i,j=1,\dotsc,r,
\end{equation}
where as usual $\{x\}=x-\lfloor x\rfloor$ is the fractional value. 

Let $n_0 = 4^r(\max_{\omega} m(\omega) + 1),$
and let $n > n_0$ be arbitrary.
Define
\[
a_{k}=\begin{cases}
\frac{1}{r} & k=\lfloor n\lambda_{i,j}\rfloor\text{ for some }i,j=1,\dotsc,r\\
0 & \text{otherwise.}
\end{cases}
\]
Denote for short $\zeta_{i,j}=\xi_{\lfloor n\lambda_{i,j}\rfloor}$ for
$i,j=1,\dotsc,r$ so that 
\begin{equation}\label{eq:gadyf}
f(z)=\frac{1}{r}\sum_{i,j=1}^{r}\zeta_{i,j}z^{\lfloor n\lambda_{i,j}\rfloor}.
\end{equation}

\subsubsection*{Lower bound for $\Exp\starnorm[f]$}

Define a random variable $\omega\in\{0,\frac{1}{2}\}^{r \times r}$ by
\[
\omega_{i,j}=\begin{cases}
0 & \Re \zeta_{i,j}\ge0\\
\tfrac{1}{2} & \Re \zeta_{i,j}<0.
\end{cases}
\]
Let $I$ be the interval of length $\frac{1}{n}$ centered around
$\frac{m(\omega)}{n}$. 

We will show that $\Exp M_{I}^2(f)$ is large.
To do so, we give an effective approximation for $\Re f$ on $I.$

Define
\[
  g(\theta) \coloneqq \sum_{i=1}^r \Xi_i \cos(2\pi \cdot 2^i n \theta)
  \quad
  \text{where}
  \quad
  \Xi_i \coloneqq \frac{1}{r} \sum_{j=1}^r |\Re \zeta_{i,j} |.
\]
Notice that $g$ is $1/n$--periodic and therefore
\[
  M_{I}^2(g) = %
  \fint_I |g(\theta)|^2\,d\theta = \frac{1}{2}\sum_{i=1}^r \Xi_i^2.
\]
Hence $\Exp m_I^2(g) \geq Cr$ for some absolute constant $C > 0,$ and so it remains to approximate $f$ by $g.$

\begin{claim}   There is a sine trigonometric polynomial $h$
  such that with
  \[
    E = E(\theta) \coloneqq \Re f( e( m(\omega)/n + \theta)) - g(\theta) - h(n\theta),
  \]
  for $|\theta| \leq \tfrac1n,$
  \[ |E(\theta)| \leq 3 \cdot 4^{-r} \cdot \sum_{i,j} |\zeta_{i,j}|. \]
\end{claim}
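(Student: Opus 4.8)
The plan is to Taylor-expand each exponential $z^{\lfloor n\lambda_{i,j}\rfloor}$ around the center $e(m(\omega)/n)$ of the interval $I$, keeping only the linear term and controlling the remainder. Write $z = e(m(\omega)/n + \theta)$ with $|\theta| \le 1/n$. Then
\[
  z^{\lfloor n\lambda_{i,j}\rfloor}
  = e\bigl( \lfloor n\lambda_{i,j}\rfloor (m(\omega)/n + \theta) \bigr)
  = e\bigl( \lfloor n\lambda_{i,j}\rfloor \theta \bigr) \cdot e\bigl( \lfloor n\lambda_{i,j}\rfloor m(\omega)/n \bigr).
\]
The first step is to analyze the phase $\lfloor n\lambda_{i,j}\rfloor m(\omega)/n$. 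Using $\lfloor n\lambda_{i,j}\rfloor = n\lambda_{i,j} - \{n\lambda_{i,j}\}$ and the defining property \eqref{eq:mdef} of $m(\omega)$, one has $\lfloor n\lambda_{i,j}\rfloor m(\omega)/n = \lambda_{i,j} m(\omega) - \{n\lambda_{i,j}\} m(\omega)/n$, and modulo $1$ the quantity $\lambda_{i,j} m(\omega)$ is within $4^{-r}$ of $\omega_{i,j} \in \{0, \tfrac12\}$; the correction $\{n\lambda_{i,j}\}m(\omega)/n$ is negligible once $n > n_0 \ge 4^r m(\omega)$. So $e(\lfloor n\lambda_{i,j}\rfloor m(\omega)/n) = (-1)^{2\omega_{i,j}} + O(4^{-r})$, i.e. it is $+1$ when $\Re\zeta_{i,j}\ge 0$ and $-1$ otherwise, up to an error $O(4^{-r})$.

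**Extracting the main term.** Next I would analyze the $\theta$-dependent factor $e(\lfloor n\lambda_{i,j}\rfloor\theta)$. Since $\lambda_{i,j} \in [2^i, 2^i + 4^{-r}]$, we have $\lfloor n\lambda_{i,j}\rfloor = 2^i n + O(4^{-r} n)$, so $\lfloor n\lambda_{i,j}\rfloor \theta = 2^i n\theta + O(4^{-r} n\theta)$, and since $|\theta| \le 1/n$ the error here is $O(4^{-r})$. Thus $e(\lfloor n\lambda_{i,j}\rfloor\theta) = e(2^i n\theta) + O(4^{-r})$. Combining, each term contributes $\zeta_{i,j} \cdot \operatorname{sgn}(\Re\zeta_{i,j}) \cdot e(2^i n\theta)$ up to $O(4^{-r}|\zeta_{i,j}|)$, where more carefully the sign is $\pm 1$ according to $\omega_{i,j}$. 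Taking real parts: $\Re\bigl(\zeta_{i,j}(-1)^{2\omega_{i,j}} e(2^i n\theta)\bigr)$. The real part of $\zeta_{i,j} e(2^i n\theta)$ is $\Re\zeta_{i,j}\cos(2\pi 2^i n\theta) - \Im\zeta_{i,j}\sin(2\pi 2^i n\theta)$, and multiplying by $(-1)^{2\omega_{i,j}} = \operatorname{sgn}(\Re\zeta_{i,j})$ turns the cosine coefficient into $|\Re\zeta_{i,j}|$. Summing over $j$ and dividing by $r$ gives exactly $\Xi_i \cos(2\pi 2^i n\theta)$ from the cosine parts, which sum over $i$ to $g(\theta)$; the sine parts assemble into a sine trigonometric polynomial in $n\theta$, which is our $h(n\theta)$.

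**Bounding the error and closing.** The error $E(\theta)$ is then a sum over the $r^2$ pairs $(i,j)$ of terms each of size $O(4^{-r}|\zeta_{i,j}|)$; I would track the constants carefully (there are three sources of $4^{-r}$ error: the phase approximation, the $\theta$-factor approximation, and cross terms between them) and check that the total is bounded by $3 \cdot 4^{-r}\sum_{i,j}|\zeta_{i,j}|$ as claimed. The main obstacle is precisely this bookkeeping: one must be careful that the errors, while individually $O(4^{-r})$, do not accumulate a factor of $r^2$ that would swamp the gain — but since the claimed bound already carries the full sum $\sum_{i,j}|\zeta_{i,j}|$ (which is itself of order $r^2$ in expectation), there is room, and the point is just that the per-term constant stays below $3$. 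A clean way to organize this is to write $z^{\lfloor n\lambda_{i,j}\rfloor} = e(\lfloor n\lambda_{i,j}\rfloor m(\omega)/n)\, e(\lfloor n\lambda_{i,j}\rfloor \theta)$ exactly, replace each of the two factors by its target value plus an explicit error bounded by $2\pi \cdot 4^{-r}$ (from $|e(x) - e(y)| \le 2\pi|x-y|$), and expand the product of (target + error)(target + error), discarding the main term into $g + h$ and bounding the three remaining pieces; each is at most $(2\pi + (2\pi)^2 4^{-r})4^{-r}|\zeta_{i,j}| \le 3\cdot 4^{-r}|\zeta_{i,j}|$ for $r$ large, after possibly being slightly more generous with the constant or absorbing into the choice of $r$.
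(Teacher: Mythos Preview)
Your proposal is correct and follows essentially the same route as the paper. The paper combines the two phase approximations additively---bounding the distance from $(\tfrac{m(\omega)}{n}+\theta)\lfloor n\lambda_{i,j}\rfloor$ to $\omega_{i,j}+2^{i}n\theta$ modulo $\Z$ by $2\cdot 4^{-r}+4^{-r}=3\cdot 4^{-r}$ and then passing to the real parts in one step---whereas you factor $e(\cdot)$ multiplicatively and expand $(\text{target}+\text{error})(\text{target}+\text{error})$; these are equivalent bookkeepings of the same estimate, and your identification of the cosine part with $g$ and the sine part with $h$ matches the paper's use of $e(\omega_{i,j})=(-1)^{2\omega_{i,j}}$.
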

\begin{proof}
  By \eqref{eq:mdef},
  \[
    d(\tfrac{m(\omega)}{n}\lfloor n \lambda_{i,j} \rfloor - \omega_{i,j}, \Z)
    \leq 4^{-r} + \tfrac{m(\omega)}{n}
    \leq 2\cdot 4^{-r}.
  \]
  By \eqref{eq:lambdarange}, $\lfloor n\lambda_{i,j} \rfloor \in [n2^{i}, n2^{i} + n4^{-r}],$
  and so for $|\theta| \leq \tfrac 1n,$
  \[
    |\theta \lfloor n \lambda_{i,j} \rfloor - \theta n2^{i}| \leq 4^{-r}.
  \]
  Combining these two estimates, for $|\theta| \leq \tfrac 1n,$ for all $i,j=1,2,\dots,r$
  \[
    \left|
    \Re \left( 
    \zeta_{i,j} e( (\tfrac{m(\omega)}{n} + \theta )\lfloor n \lambda_{i,j} \rfloor)
    \right)
    -
    \Re \left( \zeta_{i,j}e(\omega_{i,j} + 2^{i} n \theta) \right)
    \right|
    \leq 3\cdot 4^{-r} |\zeta_{i,j}|.
  \]
  Using that %
  $e(\theta + \tfrac 12) = - e(\theta),$ the claim follows by applying the previous estimate term-by-term to \eqref{eq:gadyf}.
\end{proof}

We now bound the oscillation $M_I^2(f)$ as follows.
Using that $\fint_I g = \fint_I h = 0$ and the orthogonality of $g$ and $h$ on $I$,
\[
  \begin{aligned}
  \biggl[\fint_I | f(e(\theta)) - {\textstyle \fint_I } f|^2\,d\theta\biggr]^{1/2}
  &\geq 
  \biggl[\fint_I | \Re f(e(\theta)) - {\textstyle \fint_I } \Re f|^2 \,d\theta\biggr]^{1/2} \\
  &\geq 
  \biggl[\fint_I | g(\theta)+h(\theta)|^2 \,d\theta\biggr]^{1/2}
  -
  \biggl[\fint_I | E(\theta)|^2 \,d\theta\biggr]^{1/2} \\
  &\geq
  \biggl[\fint_I | g(\theta)|^2 \,d\theta\biggr]^{1/2}
  -
  3\cdot 4^{-r} \cdot \sum_{i,j} |\zeta_{i,j}|\\
  &\geq
  \biggl[ \frac{1}{2}\sum_{i=1}^r \Xi_i^2 \biggr]^{1/2}
  -
  3\cdot 4^{-r} \cdot \sum_{i,j} |\zeta_{i,j}|\\
\end{aligned}
\]
Using \cite[Proposition 4.1]{Girela}, there is a constant $C_2 \geq 1$ so that
\[
  \Exp \starnorm[f]
  \geq
  \frac{1}{C_2}
  \Exp
  \biggl(
  \biggl[\fint_I | f(e(\theta)) - {\textstyle \fint_I } f|^2\,d\theta\biggr]^{1/2}
  \biggr)
  \geq
  \sqrt{r}/C -  C\cdot 4^{-r} r^2,
\]
for some sufficiently large absolute constant $C>0.$

\subsubsection*{Upper bound for $\Exp \Blochnorm[f]$}

We begin by computing 
\[
  f'(z) = \frac{1}{r} \sum_{i,j} \zeta_{i,j} \lfloor n \lambda_{i,j} \rfloor z^{\lfloor n\lambda_{i,j} \rfloor - 1},
\]
with the sum over all $i,j$ in $\{1,2,\dots,r\}.$
Let $\Theta_i = \frac{1}{r}\sum_{j} |\zeta_{i,j}|.$  Then for $t = |z| < 1$
\[
  \begin{aligned}
    (1-|z|)
    |f'(z)|
    &\leq 
    \frac{1-t}{r}\sum_{i,j} |\zeta_{i,j}| n 2^{i+1} t^{n 2^{i} - 1} \\
    &\leq (\max_i \Theta_i) (1-t) \sum_{i} n 2^{i+1} t^{n 2^{i} - 1} \\
    &\leq 2(\max_i \Theta_i) (1-t) \sum_{i=1}^\infty t^{i-1} \leq 2(\max_i \Theta_i).
 \end{aligned}
\]
Using Bernstein's inequality (e.g.  \cite[Theore 2.8.1]{Vershynin}) or standard manipulations (c.f.\ \cite[Exercise 2.5.10]{Vershynin}) for sums of independent random variables, one can check that
\[
\sup_{r \in \bbn} \left[\Exp \max_{i=1,2,\dots,r} \Theta_i \right]  < \infty.
\]
This concludes the proof of Lemma \ref{lem:gady}. \hfill \wasylozenge

\subsection{ BMO GAFs are not always VMO GAFs }

We answer a question of Sledd \cite{sledd1981random}, showing that there are GAFs which are in BMOA but not in VMOA.
We begin by defining a new seminorm on BMOA
\[
  \|f\|_{*,n} \coloneqq \sup_{I : 2^{-n} \leq |I| \leq 2^{-(n-1)}} M_I^1(f),
\]
where the supremum is over intervals $I \subset \R/\Z.$
\begin{lemma}
  There is a constant $c>0$ and an $m \in \N$ so that for all integers $n \geq m$
  and for all polynomials $p$ with coefficients supported in $[2^{n},2^{n+1}]$
    \[
      \starnorm[p] \geq \|p\|_{*,n-m} \geq c\| p \|_{\infty} \geq {c}\starnorm[p].
    \]
  \label{lem:tao}
\end{lemma}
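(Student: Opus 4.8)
The plan is to reduce the statement to its middle inequality, since the two outer ones are essentially free. First, $\|p\|_{*,n-m}$ is a supremum of $M_I^1(p)$ over a subclass of the intervals appearing in $\starnorm[p]$, so $\starnorm[p]\ge\|p\|_{*,n-m}$. Next, for every interval $I$ Cauchy--Schwarz gives
\[
  M_I^1(f)\le\left(\fint_I\left|f-\fint_I f\right|^2\right)^{1/2}=\left(\fint_I|f|^2-\left|\fint_I f\right|^2\right)^{1/2}\le\|f\|_\infty,
\]
so $\starnorm[p]\le\|p\|_\infty$ and $c\|p\|_\infty\ge c\,\starnorm[p]$ for any $c>0$. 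Thus everything comes down to proving $\|p\|_{*,n-m}\ge c\|p\|_\infty$.

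For that, I would use the trapezoidal kernel $T_n$ of \eqref{eq:trapezoid} as a reproducing kernel. Since $\widehat{T_n}\equiv1$ on $[2^n,2^{n+1}]$ while $p$ has spectrum in $[2^n,2^{n+1}]$, we have $T_n\star p=p$; and since $\widehat{T_n}(0)=0$, for every constant $\kappa\in\C$ and every $x$,
\[
  p(e(x))=\int_{\mathbb T}T_n(e(x-y))\left(p(e(y))-\kappa\right)\,\dd y.
\]
Choose $x_0$ with $|p(e(x_0))|=\|p\|_\infty$, put $\ell=2^{m-n}$, let $I$ be the arc of length $\ell$ centered at $x_0$, and take $\kappa=\fint_I p$. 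Split the integral into the part over $I$ and the part over $\mathbb T\setminus I$. On $I$ use the crude bound $|T_n|\le\|T_n\|_\infty\lesssim2^n$ (immediate from \eqref{eq:trapezoid} and $\|K_N\|_\infty=N+1$, or from $\|T_n\|_1\le6$ and Bernstein's inequality), which gives a contribution $\lesssim 2^n\int_I|p(e(y))-\fint_I p|\,\dd y=2^n\ell\,M_I^1(p)=2^m M_I^1(p)$. On $\mathbb T\setminus I$ use $|p(e(y))-\kappa|\le2\|p\|_\infty$ together with the decay \eqref{eq:trapezoidbound}, which yields $\int_{\ell/2<|\theta|\le1/2}|T_n(e(\theta))|\,\dd\theta\lesssim 2^{-n}/\ell=2^{-m}$, hence a contribution $\lesssim2^{-m}\|p\|_\infty$.

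Combining, $\|p\|_\infty\le C_1 2^m M_I^1(p)+C_2 2^{-m}\|p\|_\infty$ with absolute constants $C_1,C_2$. Fix $m$ once and for all so large that $C_2 2^{-m}\le\tfrac12$; then $M_I^1(p)\ge(2C_1 2^m)^{-1}\|p\|_\infty=:c\|p\|_\infty$. Since $|I|=2^{-(n-m)}$, this interval is admissible in the definition of $\|p\|_{*,n-m}$ (one takes $n>m$ so that $I$ is a proper sub-arc of the circle), whence $\|p\|_{*,n-m}\ge M_I^1(p)\ge c\|p\|_\infty$. The only genuinely delicate point is the mean-zero reproducing identity: because $\widehat{T_n}(0)=0$ we may subtract the average of $p$ over $I$ from the integrand without changing $p(e(x_0))$, and this is exactly what turns the pointwise identity $|p(e(x_0))|=\|p\|_\infty$ into a lower bound for the oscillation $M_I^1(p)$. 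The remaining work --- the split into a ``bulk'' term controlled by $\|T_n\|_\infty$ and a ``tail'' term controlled by the $\theta^{-2}$ decay of $T_n$, with $m$ (absolute) chosen to absorb the tail --- is routine.
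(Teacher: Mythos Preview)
Your proof is correct and follows essentially the same route as the paper's: the outer inequalities are dispatched trivially (the paper cites \cite[Proposition 2.1]{Girela} for $\starnorm[p]\le\|p\|_\infty$, which is exactly your Cauchy--Schwarz computation), and for the middle inequality both you and the paper use the identity $p=T_n\star(p-\fint_I p)$, split the convolution into a near part controlled by $\|T_n\|_\infty\lesssim 2^n$ and a far part controlled by the decay \eqref{eq:trapezoidbound}, and then fix $m$ to absorb the far term. The only cosmetic difference is that the paper takes $|I|=2\cdot 2^{m-n}$ and lands on $\|p\|_{*,n-m-1}$ (with $m=5$), whereas you take $|I|=2^{m-n}$ and land directly on $\|p\|_{*,n-m}$.
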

\vspace{-2em}
\begin{proof}
  The first inequality is trivial.  The last inequality is \cite[Proposition 2.1]{Girela}.  Thus it only remains to prove the second inequality.
  Recall $T_n,$ the dyadic trapezoidal kernel \eqref{eq:trapezoid},
  which satisfies for all $n \in \N$ that
  $\widehat T_n(j) = 1$ for $j \in [2^n, 2^{n+1}],$ $\widehat T_n(0) = 0,$ and $\|T_n\|_\infty \leq C\cdot 2^n$ (c.f.\ \eqref{eq:fejer} and \eqref{eq:trapezoid}) for some absolute constant $C>0.$
  From the condition on the support of the coefficients, $p \star T_n = p.$  As the constant coefficient of $T_n$ vanishes, $1 \star T_n =0,$ and therefore we have the identity that for any $I \subset \R/\Z$ and any $\phi \in [0,1]$
  \begin{equation}
    p(e(\phi)) = ((p - {\textstyle \fint_I } p) \star T_n)(z) = \int_0^1 (p(e(\theta))- {\textstyle \fint_I } p)T_n(e(\phi-\theta))\,d\theta.
    \label{eq:pid}
  \end{equation}
  Fix $m \in \N.$  Let $I$ be the interval around $\phi$ of length $2\cdot 2^{m-n}.$  Then for $n \geq m+1,$
  \begin{equation}\label{eq:pest}
    \begin{aligned}
    |p(e(\phi))|
    &\leq 
    \int_{I \cup I^c} |p(e(\theta))- {\textstyle \fint_I } p| \cdot |T_n(e(\phi-\theta))|\,d\theta \\
    &\leq
    \|T_n\|_\infty \int_{I} |p(e(\theta))- {{\textstyle \fint_I }} p|\,d\theta
    +2\|p\|_\infty \int_{I^c} |T_n(e(\phi-\theta))|\,d\theta.
    \end{aligned}
  \end{equation}
  The first summand we control as follows
    \begin{equation}\label{eq:Iest}
    \|T_n\|_\infty \int_{I} |p(e(\theta))- {{\textstyle \fint_I }} p|\,d\theta
    \leq
    6 \cdot 2^{m+1} \fint_{I} |p(e(\theta))- {{\textstyle \fint_I }} p|\,d\theta
    \leq 6 \cdot 2^{m+1} \|p\|_{*,n-m-1}.
  \end{equation}
  For the second summand, using \eqref{eq:trapezoidbound}
  \begin{equation}\label{eq:Icest}
    \begin{aligned}
    2\|p\|_\infty \int_{I^c} |T_n(e(\phi-\theta))|\,d\theta
    &\leq 
    4\|p\|_\infty \int_{2^{m-n}}^{1/2} |T_n(e(\theta))|\,d\theta \\
    &\leq 
    80 \cdot 2^{-n} \|p\|_\infty \int_{2^{m-n}}^{1/2} |1-e(\theta)|^{-2}\,d\theta \\
    &\leq 
    20 \cdot 2^{-n} \|p\|_\infty \int_{2^{m-n}}^{\infty} \theta^{-2}\,d\theta \\
    &\leq 
    20 \cdot 2^{-m} \|p\|_\infty.
  \end{aligned}
  \end{equation}
  Applying \eqref{eq:Iest} and \eqref{eq:Icest} to \eqref{eq:pest},
  \[
    \|p\|_\infty 
    \leq 6 \cdot 2^{m+1} \|p\|_{*,n-m-1}
    +20 \cdot 2^{-m} \|p\|_\infty.
  \]
  Taking $m=5,$ we conclude
  \[
    \|p\|_\infty \leq 2^{10} \|p\|_{*,n-6}.
  \]
\end{proof}

The previous lemma allows us to estimate $\starnorm$ of polynomials supported on dyadic blocks efficiently in terms of the supremum norm.  Hence, we record the following simple observation.
\begin{lemma}\label{lem:blockinfinity}
  For any $n \geq 2,$ let $f_n$ be the Gaussian polynomial
  \[
    f_n(z) = \frac{1}{\sqrt{n\log n}} \sum_{k=n}^{2n-1} \xi_k z^k.
  \]
  Then there is an absolute constant $C>0$ so that
  \[
    C^{-1} < \Exp\|f_n\|_{\infty} < C.
  \]
  And for all $t \geq 0,$
  \[
    \Pr\left[ |\|f_n\|_\infty - \Exp\|f_n\|_{\infty}| > t  \right] \leq 2e^{-(\log n)t^2}.
  \]
\end{lemma}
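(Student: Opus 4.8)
The plan is to pass to the trigonometric polynomial $P(\theta)\coloneqq\sum_{k=n}^{2n-1}\xi_k e(k\theta)$, so that $\|f_n\|_\infty=(n\log n)^{-1/2}\sup_{\theta\in[0,1]}|P(\theta)|$, and to prove the concentration estimate and the two-sided bound $\E\|f_n\|_\infty\asymp 1$ separately. For the concentration estimate, regard $\xi=(\xi_n,\dots,\xi_{2n-1})$ as a vector in $\C^n$: for each fixed $z$ with $|z|=1$ the map $\xi\mapsto (n\log n)^{-1/2}\bigl|\sum_k\xi_k z^k\bigr|$ is Lipschitz with constant $(n\log n)^{-1/2}\bigl\|(z^n,\dots,z^{2n-1})\bigr\|_{\ell^2}=(\log n)^{-1/2}$, hence so is its supremum over $|z|=1$, i.e.\ $\xi\mapsto\|f_n\|_\infty$. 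Applying Proposition~\ref{prop:glip} to $\sqrt{\log n}\,\|f_n\|_\infty$ and to $-\sqrt{\log n}\,\|f_n\|_\infty$ and rescaling $t$ gives $\Pr[\,|\|f_n\|_\infty-\E\|f_n\|_\infty|>t\,]\le 2e^{-(\log n)t^2}$; finiteness of $\E\|f_n\|_\infty$ is also part of that proposition.

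For the upper bound $\E\|f_n\|_\infty\lesssim 1$, note that for each fixed $\theta$ the variable $P(\theta)/\sqrt n$ is a standard complex Gaussian (a linear combination of i.i.d.\ $\stdcg$ with unimodular coefficients whose squared moduli sum to $n$), so $\Pr[\,|P(\theta)|>s\sqrt n\,]\le e^{-s^2}$. Since $P$ has Fourier support in $[n,2n)$, Bernstein's inequality for trigonometric polynomials gives $\|P'\|_\infty\lesssim n\|P\|_\infty$, so evaluating on a net $\Theta\subset[0,1]$ of cardinality $\asymp n$ and spacing $\asymp 1/n$ yields $\|P\|_\infty\le 2\max_{\theta\in\Theta}|P(\theta)|$. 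A union bound over $\Theta$ then gives $\Pr[\,\|P\|_\infty>u\sqrt n\,]\lesssim n\,e^{-u^2/4}$, and integrating this tail (splitting the integral at $u\asymp\sqrt{\log n}$) produces $\E\|P\|_\infty\lesssim\sqrt{n\log n}$, which is exactly $\E\|f_n\|_\infty\lesssim 1$.

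The lower bound $\E\|f_n\|_\infty\gtrsim 1$ rests on an exact evaluation. Restricting $P$ to the points $\theta_j=j/n$, $j=0,1,\dots,n-1$, and writing $k=n+\ell$ so that $e(k\theta_j)=e(\ell j/n)$, one sees that $(P(\theta_0),\dots,P(\theta_{n-1}))=F\,(\xi_n,\dots,\xi_{2n-1})$ where $F$ is the $n\times n$ discrete Fourier matrix, which satisfies $FF^{*}=nI$. Consequently this vector is a centered complex Gaussian with covariance $nI$ and vanishing pseudo-covariance, i.e.\ its coordinates are i.i.d.\ copies of $\sqrt n\cdot\stdcg$. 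Therefore $\|P\|_\infty\ge\sqrt n\,\max_{0\le j<n}|Z_j|$ with $\{Z_j\}$ i.i.d.\ standard complex Gaussian, and since $\Pr[\,|Z_j|\le\sqrt{\tfrac12\log n}\,]=1-n^{-1/2}$ one obtains $\E\max_{0\le j<n}|Z_j|\gtrsim\sqrt{\log n}$ (for the few small values of $n$ this is trivial since the left side is bounded below by a positive constant), whence $\E\|f_n\|_\infty\gtrsim 1$.

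The net/Bernstein argument and the two tail integrations are entirely routine; the one point worth isolating is the discrete Fourier evaluation, which is what makes the lower bound lossless. I expect the only real bookkeeping issue to be ensuring that all of the inequalities hold for every $n\ge 2$ rather than only for $n$ large, which is immediate since each of $\E\|f_n\|_\infty$ and $\E\max_{0\le j<n}|Z_j|$ is bounded above and below by positive absolute constants on any finite range of $n$.
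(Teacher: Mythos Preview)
Your proof is correct and follows essentially the same approach as the paper: the concentration is Gaussian Lipschitz concentration (Proposition~\ref{prop:glip}), the lower bound uses the observation that $\{f_n(e(j/n))\}_{0\le j<n}$ are i.i.d.\ complex Gaussians of variance $1/\log n$, and the upper bound reduces to the maximum over a finite net of size $\asymp n$. The only cosmetic difference is that the paper invokes a polynomial sampling inequality to pass to the net, whereas you use Bernstein's inequality directly; these are equivalent here.
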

\begin{proof}
  Observe that the family $\left\{ f_n(e(k/n)) : 0 \leq k < n \right\}$ are i.i.d.\ complex Gaussians of variance $\frac{1}{\log n}.$  Hence 
  \[
    \Exp \|f_n \|_\infty \geq \Exp \max_{0 \leq k < n} |f_n(e(k/n))| \geq C.
  \]
  for some constant $C>0$ (see \cite[Exercise 2.5.11]{Vershynin}).  Conversely, there is an absolute constant so that for any polynomial $p$ of degree $2n$ (e.g.\ see \cite{Rakhmanov})
  \[
    \|p \|_\infty \leq C \max_{0 \leq k \leq 4n} |p(e(k/(4n)))|.
  \]
  Hence using that each of $f_n(e(k/(4n)))$ is complex Gaussian of variance $\frac{1}{\log n}$, we conclude that there is another constant $C>0$ so that
  \[
    \Exp \|f_n \|_\infty 
    \leq C
  \]
(see \cite[Exercise 2.5.10]{Vershynin}).
  The concentration is a direct consequence of Proposition \ref{prop:glip}.
\end{proof}

Let $\left\{ n_k \right\}$ be a monotonically increasing sequence of positive integers, to be chosen later.  Let $\left\{ f_{k} \right\}$ be independent Gaussian polynomials as in Lemma \ref{lem:blockinfinity} with $n=2^{n_k}.$ %
Let $\left\{ a_k \right\}$ be a non--negative sequence.
Define $g = \sum_{k=1}^\infty a_k f_k.$  Under the condition that $\sum_{k=1}^\infty \frac{a_k^2}{n_k} < \infty,$ $g$ is an $\text{H}^2$--GAF.

\begin{lemma}
  Let $n_1=1$ and define $n_{k+1}=3^{n_k}$ for all $k \geq 0.$
  If the sequence $\left\{ a_k \right\}$ is bounded, then $g$ is in BMOA almost surely.
  If furthermore $\lim_{k \to \infty} a_k = 0,$ then $g$ is in VMOA almost surely.
  \label{lem:gbmoa}
\end{lemma}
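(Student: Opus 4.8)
The plan is to reduce both claims to the Sledd-space machinery of Section \ref{sec:sledd} together with the sharp dyadic-block estimate of Lemma \ref{lem:tao}. Since each $f_k$ has Fourier support in the dyadic block $[2^{n_k}, 2^{n_{k+1}})$, and since the gaps $n_{k+1}=3^{n_k}$ grow so fast that these blocks are widely separated, the functions $\{R_m \star g\}$ will, for each dyadic scale $m$, pick out at most one of the $f_k$ (up to boundary effects between consecutive blocks, which contribute only finitely many extra terms at controlled cost). So I would first record that, because of the lacunary separation, $\sup_{|x|=1}\sum_{m\ge 0}|R_m\star g(x)|^2$ is comparable — up to absolute constants and the usual $T_n$ versus $R_n$ interchange from Theorem \ref{thm:tailsleddequiv} and Remark \ref{rem:sleddequiv} — to $\sup_k a_k^2 \|f_k\|_\infty^2$ plus possibly a sum over the (boundedly many at each scale) block-boundary contributions. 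Actually, rather than chase $\RSledd$ directly, it is cleaner to work with $\starnorm$ and exploit that for a polynomial $p$ supported in $[2^n,2^{n+1}]$ one has $\starnorm[p]\asymp \|p\|_\infty$ by Lemma \ref{lem:tao}.

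For the BMOA claim: I would show $\Exp\starnorm[g]^2 < \infty$ (which suffices by Corollary \ref{cor:expisenough}). Split $g = \sum_k a_k f_k$; by the triangle inequality for $\starnorm$ it is enough to handle the behavior on intervals $I$ of each dyadic length $2^{-j}$ and sum. Given a scale $j$, only the $f_k$ with $n_k$ near $j$ are "resolved" at that scale; the tails $f_k$ with $n_k \gg j$ oscillate fast and average out on $I$ (contributing to the mean, hence cancelling in $M_I$), while those with $n_k \ll j$ are essentially constant on $I$ (again cancelling in $M_I$ up to a small error controlled by $2^{j-n_{k+1}}$, which is summable by the triple-exponential growth). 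The surviving terms at scale $j$ are a bounded number of $a_k\|f_k\|_\infty$, each of which by Lemma \ref{lem:blockinfinity} has expectation $\lesssim a_k$ and Gaussian concentration with variance parameter $1/(n_k\log)$. Taking the supremum over $j$ and using $\sup_k a_k < \infty$ plus the rapid decay of the off-scale errors, one gets $\Exp\starnorm[g]^2 \lesssim (\sup_k a_k)^2 \cdot \Exp\sup_k \|f_k\|_\infty^2 + (\text{summable errors})$; the $\sup_k\|f_k\|_\infty$ has bounded second moment because $\Pr[\|f_k\|_\infty > t]\le 2e^{-n_k t^2}$ (Lemma \ref{lem:blockinfinity}) is summable in $k$ after a union bound for $t$ above an absolute constant. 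Hence $g \in \BMOA$ a.s.

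For the VMOA claim, under $a_k \to 0$: the same decomposition shows that the scale-$j$ oscillation of $g$ is, up to negligible errors, controlled by $\max\{a_k : n_k \text{ near } j\}$, which tends to $0$ as $j \to \infty$. More precisely I would invoke Theorem \ref{thm:tailsledd} (or Theorem \ref{thm:sleddvmoa}) after first upgrading to the statement that $\lim_{k\to\infty}\sup_{|x|=1}\sum_{m\ge k}|R_m\star g(x)|^2 = 0$ a.s.; by the block separation this tail sum is dominated by $\sup_{\ell:\, n_\ell \ge \text{(roughly }k)} a_\ell^2\|f_\ell\|_\infty^2$ plus summable boundary terms, and since $a_\ell \to 0$ while $\|f_\ell\|_\infty$ stays bounded a.s. (Borel–Cantelli from the concentration bound), this supremum tends to $0$. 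Then Theorem \ref{thm:sleddvmoa} gives $g \in \VMOA$ a.s.

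The main obstacle I anticipate is making precise the claim that at each dyadic scale only $O(1)$ of the blocks are "active," i.e. bounding the cross-contributions: the error from a high-frequency block $f_k$ ($n_k$ much larger than the scale) to $M_I(g)$, and the error from a low-frequency block ($n_k$ much smaller) — these require a short computation showing geometric decay in $|n_k - j|$ (using that $\fint_I$ of a pure frequency-$\nu$ character is $O((\nu|I|)^{-1})$ for the high modes, and that a frequency-$\nu$ character varies by $O(\nu|I|)$ across $I$ for the low modes), and the triple-exponential spacing $n_{k+1}=3^{n_k}$ is exactly what makes the resulting double sum over $(j,k)$ converge. The rest is bookkeeping with Lemma \ref{lem:tao} and Lemma \ref{lem:blockinfinity}.
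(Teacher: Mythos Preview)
Your $\BMOA$ strategy --- localize to dyadic scales via $\|\cdot\|_{*,\ell}$ and argue that at each scale only the one or two blocks $f_k$ with $n_k$ near $\ell$ contribute nontrivially --- is exactly what the paper does, but your reasoning for the high-frequency blocks is wrong. If $f_k$ has Fourier support well above $1/|I|$, then it is $\fint_I f_k$ that is small, so $M_I(f_k) \approx \fint_I |f_k|$, which does \emph{not} vanish and certainly does not decay geometrically in the scale gap $|n_k - j|$. The paper handles the high part not by cancellation but by the crude Cauchy--Schwarz bound $\|g_{>k}\|_{*,\ell} \leq 2^{\ell/2}\|g_{>k}\|_2$, combined with the key fact that $\|f_j\|_2 \asymp n_j^{-1/2}$ (this is the point of the normalization in Lemma \ref{lem:blockinfinity}); the tower growth $n_{k+1}=3^{n_k}$ is precisely what makes $2^{n_k/2} n_{k+1}^{-1/2}$ bounded.

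More seriously, your route to the $\VMOA$ claim via Theorem \ref{thm:sleddvmoa} cannot succeed. The step ``the tail sum is dominated by $\sup_{\ell} a_\ell^2\|f_\ell\|_\infty^2$'' is false: the supremum over $x$ sits outside a \emph{sum} over $\ell$, and a nesting argument (Lemma \ref{lem:nesting}) can produce a single point $x$ at which infinitely many $|f_\ell(x)|$ are simultaneously bounded below. Indeed, Theorem \ref{thm:VMOAnotSledd} constructs a $g$ of exactly the form considered in this lemma, with $a_k \to 0$, that lies in $\VMOA$ (by the present lemma!) yet has $\RSledd[g]=\infty$. So under the hypotheses here $g$ need not be in $\Sledd$ at all, and the Sledd criterion is simply unavailable. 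The paper's $\VMOA$ argument instead reuses the scale decomposition from the $\BMOA$ part verbatim and observes that the only contribution to $\limsup_\ell \|g\|_{*,\ell}$ that does not already vanish is from the active blocks, of size $\lesssim (a_{k-1}+a_k)\sup_j \|f_j\|_\infty$, which tends to $0$ when $a_k \to 0$.
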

\begin{proof}
  Without loss of generality we may assume all $a_k \leq 1.$
  Observe that
  \[
    \starnorm[g] = \sup_{\ell \in \N} \|g\|_{*,\ell}.
  \]
  Therefore if $\sup_{\ell \in \N} \|g\|_{*,\ell} < \infty \As,$ then $g$ is in BMOA. 
  If furthermore
  \( \lim_{\ell \to \infty} \|g\|_{*,\ell} =0 \As,
  \)
  then $g$ is in VMOA almost surely.

Put $g_j = a_j f_j$ for all $j \in \bbn$.  Fix $\ell \in \N$ and let $k$ be such that $n_{k-1} - m \leq \ell \leq n_k - m,$ where $m$ is the constant from Lemma \ref{lem:tao}, and decompose $g = g_{<k-1} + g_{k-1} + g_k + g_{>k}.$
  Then
  \[
    \|g_{>k}\|_{*,\ell} \leq 2^{\ell/2} \| g_{>k}\|_{2} \leq 2^{n_k/2} \| g_{>k}\|_{2},
  \]
  which follows from Cauchy--Schwarz applied to $M_{I}^1(g_{>k})$ for an interval $|I| \geq 2^{-\ell}.$
  On the other hand
  \[
    \|g_{<k}\|_{*,\ell} 
    \leq 
    2^{-\ell+1} \|g'_{<k}\|_\infty 
    \leq  
    2^{-\ell+1} 2^{n_{k-2}+1} \|g_{<k}\|_\infty 
    \leq 2^{-n_{k-1} + n_{k-2} + m +2} \|g_{<k}\|_\infty,
  \]
  where the penultimate inequality is Bernstein's inequality for polynomials.
  We conclude that
  \[
    \|g\|_{*,\ell}
    \leq  
    2^{-n_{k-1} + n_{k-2} + m +2} \|g_{<k}\|_\infty
    +2 \|g_{k}\|_\infty
    +2 \|g_{k-1}\|_\infty
    +2^{n_k/2} \| g_{>k}\|_{2}.
  \]

  Using Lemma \ref{lem:blockinfinity} and Borel-Cantelli,
  \[
    D \coloneqq \sup_{k} \|f_k\|_\infty < \infty \quad \As
  \]
  In particular
  \[
    \|g_{<k}\|_\infty \leq k D.
  \]
  Meanwhile,
  each $\left\{ \|f_j\|_2^2 \cdot 2 \cdot 2^{n_j} \log(2^{n_j}) \right\}$ is distributed as independent $\chi^2$ random variable with $2^{n_j+1}$ degrees of freedom.  Hence
  \[
    R \coloneqq \sup_{j} \{ \|g_j\|_2 \sqrt{n_j}\} < \infty \quad \As
  \]
Therefore,
  \[
    \|g_{>k}\|_2^2 = \sum_{j > k} \|g_j\|_2^2 \leq R^2 \sum_{j > k} \frac{1}{n_j} \leq \frac{3 R^2}{n_{k+1}}.
  \]
  Finally, we have
  \[
    \|g\|_{*,\ell}
    \leq  
    2^{-n_{k-1} + n_{k-2} + m +2}kD
    +2(a_{k-1} + a_k)D
    +\sqrt{3} \cdot 2^{n_k/2}\frac{R}{\sqrt{n_{k+1}}}.
  \]
  By our choice of $n_k$ (recalling $k=k(\ell)$),  the last expression is \emph{uniformly} bounded in $\ell$ almost surely.
  In addition,  if $a_k \to 0$, then
  \[
    \limsup_{\ell \to \infty}
    \|g\|_{*,\ell}
    \leq 
    \limsup_{k \to \infty}
    2(a_{k-1} + a_k)D
    =0.
  \]
\end{proof}

\begin{remark}
  A more careful analysis of $\|f_{>k}\|_{*,\ell}$ reveals that it suffices to have $n_{k+1}/n_k > c > 1$ for some $c$ to bound $\|f_{>k}\|_{*,\ell}$ uniformly over all $\ell$. We will not pursue this here.
\end{remark}

We now turn to proving the existence of the desired GAF.
\begin{theorem}
There is a $\text{BMO}$ GAF which is almost surely not a $\text{VMO}$ GAF.
  \label{thm:BMOVMO}
\end{theorem}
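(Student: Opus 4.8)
The plan is to invoke Lemma~\ref{lem:gbmoa} with the constant sequence $a_k \equiv 1$. Since $\sum_k 1/n_k < \infty$, the series $g = \sum_{k=1}^\infty f_k$ defines an $\text{H}^2$--GAF, and since $\{a_k\}$ is bounded, Lemma~\ref{lem:gbmoa} already gives $g \in \BMOA$ almost surely. It therefore remains only to prove $g \notin \VMOA$ almost surely. Recalling that $\VMOA$ consists of those $F \in \text{H}^1$ with $\|F\|_{*,\ell} \to 0$ as $\ell \to \infty$ (equivalently $\lim_{|I| \to 0} M_I^1(F) = 0$), it is enough to exhibit a deterministic $\delta > 0$ with $\limsup_{k \to \infty} \|g\|_{*,\,n_k - m} \geq \delta$ almost surely, where $m$ is the constant of Lemma~\ref{lem:tao}.

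First I would isolate the $k$-th block: by the triangle inequality for the seminorm $\|\cdot\|_{*,\ell}$,
\[
  \|g\|_{*,\,n_k-m} \;\geq\; \|f_k\|_{*,\,n_k-m} \;-\; \|g_{<k}\|_{*,\,n_k-m} \;-\; \|g_{>k}\|_{*,\,n_k-m},
\]
with $g_{<k} = \sum_{j<k} f_j$ and $g_{>k} = \sum_{j>k} f_j$. Since the coefficients of $f_k$ are supported in $[2^{n_k}, 2^{n_k+1}]$, Lemma~\ref{lem:tao} gives $\|f_k\|_{*,\,n_k-m} \geq c\|f_k\|_\infty$ for all large $k$; and by Lemma~\ref{lem:blockinfinity} we have $\Exp \|f_k\|_\infty \geq C^{-1}$, while its concentration bound (with $t = \tfrac1{2C}$) yields $\Pr[\,\|f_k\|_\infty < \tfrac1{2C}\,] \leq 2 e^{-(n_k \log 2)/(4C^2)}$, summable in $k$. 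By Borel--Cantelli, $\|f_k\|_\infty \geq \tfrac1{2C}$ for all large $k$ almost surely, so $\|f_k\|_{*,\,n_k-m} \geq \delta := \tfrac{c}{2C} > 0$ eventually, almost surely.

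The two error terms are estimated exactly as in the proof of Lemma~\ref{lem:gbmoa}, the rapid growth $n_{k+1}=3^{n_k}$ doing all the work. For the low-frequency part, $g_{<k}$ is a polynomial of degree below $2^{n_{k-1}+1}$, so Bernstein's inequality together with $M_I^1(F) \leq |I|\,\|F'\|_\infty$ gives $\|g_{<k}\|_{*,\,n_k-m} \lesssim 2^{-n_k + n_{k-1} + m}\|g_{<k}\|_\infty$, and $\|g_{<k}\|_\infty \leq kD$ with $D = \sup_j \|f_j\|_\infty < \infty$ almost surely (Lemma~\ref{lem:blockinfinity} and Borel--Cantelli); as $n_k - n_{k-1} \to \infty$, this $\to 0$. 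For the high-frequency part, Cauchy--Schwarz gives $\|g_{>k}\|_{*,\,n_k-m} \leq 2^{n_k/2}\|g_{>k}\|_2$, and with $R = \sup_j \|f_j\|_2 \sqrt{n_j} < \infty$ almost surely (a $\chi^2$-concentration plus Borel--Cantelli argument, as in Lemma~\ref{lem:gbmoa}) one has $\|g_{>k}\|_2^2 = \sum_{j>k} \|f_j\|_2^2 \lesssim R^2/n_{k+1} = R^2 3^{-n_k}$, so $\|g_{>k}\|_{*,\,n_k-m} \lesssim R\,(2/3)^{n_k/2} \to 0$.

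Combining, $\|g\|_{*,\,n_k-m} \geq \delta - o(1)$ almost surely, hence $\limsup_k \|g\|_{*,\,n_k-m} \geq \delta > 0$ and so $g \notin \VMOA$ almost surely; alternatively, once one knows $\Pr[g \notin \VMOA] > 0$, the conclusion also follows from the zero--one law of Proposition~\ref{prop:01}. I do not expect a genuine obstacle here: the only point needing care is that the two error terms vanish uniformly over admissible intervals at scale $n_k-m$, and this is immediate from the gaps $n_k - n_{k-1} \to \infty$ and $2^{n_k}/n_{k+1} \to 0$ — precisely the inequalities already verified in the proof of Lemma~\ref{lem:gbmoa}.
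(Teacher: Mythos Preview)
Your proof is correct and uses the same construction as the paper (the GAF $g$ of Lemma~\ref{lem:gbmoa} with $a_k\equiv 1$), but the argument for $g\notin\VMOA$ is organized differently. The paper does not isolate the error terms $g_{<k}$ and $g_{>k}$ at all: instead it applies the contraction principle (Proposition~\ref{prop:contraction}) to the seminorm $\|\cdot\|_{*,n_k-m}$ to obtain $2\Pr(\|g\|_{*,n_k-m}>t)\ge\Pr(\|f_k\|_{*,n_k-m}>t)$, then combines Lemma~\ref{lem:tao} and Lemma~\ref{lem:blockinfinity} to bound the right side below by~$\tfrac12$ for small~$t$, and finishes with reverse Fatou plus the zero--one law of Proposition~\ref{prop:01}. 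Your route is more elementary---it avoids the contraction principle entirely and yields the almost-sure statement $\limsup_k\|g\|_{*,n_k-m}\ge\delta$ directly, without appeal to the zero--one law---but at the price of recycling the low-/high-frequency estimates from the proof of Lemma~\ref{lem:gbmoa}. The paper's argument is shorter precisely because the contraction principle absorbs all of that work in one line.
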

\begin{proof}
We let $g$ be as in Lemma \ref{lem:gbmoa} with $a_k=1$ for all $k.$   
By making $t$ sufficiently small and using the contraction principle (Proposition \ref{prop:contraction}), Lemma \ref{lem:tao} and Lemma \ref{lem:blockinfinity}, for all $k \in \N$
  \[
    2
    \Pr\left( 
    \|g\|_{*,n_k - m} > t
    \right)
    \geq
    \Pr\left( 
    \|f_{k}\|_{*,n_k - m} > t    \right)
    \geq
    \Pr\left( 
    \|f_{k}\|_{\infty} > ct
    \right)
    \geq \tfrac{1}{2}.
  \]
  Therefore by the reverse Fatou Lemma,
  \[
  \Pr\left( 
  \limsup_{k \to \infty} \|g\|_{*,n_k - m} > t
  \right)
  \geq
  \limsup_{k \to \infty} 
  \Pr\left( 
  \|g\|_{*,n_k - m} > t
  \right)
  \geq \tfrac{1}{4}.
  \]
  This implies, by Proposition \ref{prop:01}, that $g$ is not in VMOA a.s.

\end{proof}

Finally, we show there is a VMO GAF which is not Sledd.

\begin{lemma}
  There is an absolute constant $c>0$ so that for all $\epsilon > 0$ there is an $n_0(\epsilon)$ sufficiently large so that for all $n \geq n_0$ and for all intervals $I \subset [0,1]$ with $|I|=\epsilon$
  \[
    \Pr\left( \exists~J\subset I \text{ an interval with } |J|=\tfrac{c}{n} \text{ such that } \min_{x \in J} |f_n(x)| > \tfrac{1}{4} \right) \geq \tfrac{1}{3},
  \]
  where $f_n$ is as in Lemma \ref{lem:blockinfinity}.
  \label{lem:nesting}
\end{lemma}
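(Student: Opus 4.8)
The plan is to produce, with probability close to $1$, a single point $\theta_0$ in the \emph{middle portion} of $I$ at which $|f_n|$ is of order one, and then to ``thicken'' this point into an interval $J$ of length $c/n$ on which $|f_n|>1/4$, using Bernstein's inequality together with the a priori bound $\|f_n\|_\infty\lesssim 1$ supplied by Lemma~\ref{lem:blockinfinity}. The only genuine input is the observation that on the lattice $\{j/n\}$ the values of $f_n$ are i.i.d.\ Gaussians, which makes the pointwise lower bound essentially free; everything else is fixed-constant bookkeeping, and uniformity over $I$ comes for free from rotation invariance of the law of $(|f_n(e(\theta))|)_\theta$.

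\textbf{Step 1: a pointwise lower bound.} Let $I'$ be the interval concentric with $I$ of length $|I|/2=\epsilon/2$. As in the proof of Lemma~\ref{lem:blockinfinity}, for $\theta=j/n$ with $j\in\Z$ one has $f_n(e(j/n))=\tfrac{1}{\sqrt{n\log n}}\sum_{\ell=0}^{n-1}\xi_{n+\ell}e(\ell j/n)$, and since $\ell\mapsto \tfrac{1}{\sqrt n}(e(\ell j/n))_{j=0}^{n-1}$ is a unitary change of basis, the variables $\{f_n(e(j/n)):0\le j<n\}$ are i.i.d.\ complex Gaussians with $\E|f_n(e(j/n))|^2=1/\log n$. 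For $n\ge 4/\epsilon$ the interval $I'$ contains at least $N:=\lfloor\epsilon n/4\rfloor$ such lattice points, so the corresponding $|f_n(e(j/n))|^2$ are i.i.d.\ with distribution $\mathrm{Exp}(1)/\log n$, and a direct computation (with $\Pr(\max<t)=(1-n^{-t^2})^N$ and $t^2=(\log N-\log\log N)/\log n$) gives
\[
  \Pr\Bigl(\max_{\theta\in I'}|f_n(e(\theta))|\ge \tfrac{1}{\sqrt{\log n}}\sqrt{\log N-\log\log N}\Bigr)\ge 1-\tfrac1N.
\]
Since $N\ge\epsilon n/4$, the quantity $\tfrac{1}{\sqrt{\log n}}\sqrt{\log N-\log\log N}$ tends to $1$ as $n\to\infty$, so there is $n_1(\epsilon)$ such that for all $n\ge n_1(\epsilon)$ the event $A_n:=\{\max_{\theta\in I'}|f_n(e(\theta))|\ge \tfrac12\}$ has $\Pr(A_n)\ge 1-4/(\epsilon n)$.

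\textbf{Step 2: the global bound and thickening.} Let $M:=C+1$ with $C$ the absolute constant from Lemma~\ref{lem:blockinfinity}; its concentration bound gives $\Pr(B_n)\ge 1-2/n$ for the event $B_n:=\{\|f_n\|_\infty\le M\}$. Set $c:=\tfrac{1}{16\pi M}$, an absolute constant. On $A_n\cap B_n$ pick, by compactness, $\theta_0\in I'$ with $|f_n(e(\theta_0))|\ge\tfrac12$, and let $J:=[\theta_0-\tfrac{c}{2n},\theta_0+\tfrac{c}{2n}]$, so $|J|=c/n$. Bernstein's inequality for the trigonometric polynomial $\theta\mapsto f_n(e(\theta))$ (whose frequencies lie in $[n,2n)$) gives, for $x\in J$,
\[
  |f_n(e(x))-f_n(e(\theta_0))|\le 4\pi n\,\|f_n\|_\infty\,|x-\theta_0|\le 2\pi cM=\tfrac18,
\]
so $\min_{x\in J}|f_n(e(x))|\ge\tfrac12-\tfrac18=\tfrac38>\tfrac14$. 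Moreover $\theta_0\in I'$ lies at distance at least $\epsilon/4$ from $\partial I$, so $J\subset I$ once $\tfrac{c}{2n}<\epsilon/4$, i.e.\ $n>\tfrac{1}{8\pi M\epsilon}$.

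\textbf{Conclusion.} For $n\ge n_0(\epsilon):=\max\{n_1(\epsilon),\ \lceil 18/\epsilon\rceil,\ \lceil\tfrac{1}{8\pi M\epsilon}\rceil+1\}$ and any interval $I\subset[0,1]$ with $|I|=\epsilon$, the event in the lemma contains $A_n\cap B_n$, and (using $\epsilon\le 1$) $\Pr(A_n\cap B_n)\ge 1-4/(\epsilon n)-2/n\ge 1-6/(\epsilon n)\ge \tfrac23\ge\tfrac13$. I expect the only point requiring care to be the quantitative matching in Step~2 — that the thickened interval $J$ genuinely satisfies $|f_n|>1/4$ on all of it \emph{and} sits inside $I$ — which is exactly where the absolute bound $\|f_n\|_\infty\le M$ and the choice $c=\tfrac{1}{16\pi M}$ enter; there is nothing deep, but the constants must be tracked, and the independence-on-the-lattice fact of Step~1 is what makes the whole argument go through with a bound that is uniform in $I$.
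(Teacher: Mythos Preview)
Your proof is correct and follows essentially the same approach as the paper: use that $\{f_n(e(j/n))\}_{0\le j<n}$ are i.i.d.\ complex Gaussians of variance $1/\log n$ to locate a lattice point in the middle portion of $I$ with $|f_n|\ge\tfrac12$, then invoke Bernstein's inequality together with the sup-norm bound from Lemma~\ref{lem:blockinfinity} to thicken that point into an interval of length $c/n$. The paper's argument differs only in cosmetic details (middle third versus middle half, and a slightly cruder probability estimate $e^{-n^{3/4}\epsilon/4}$ in place of your $4/(\epsilon n)$).
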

\begin{proof}
  We again use the 
  observation that the family $\left\{ f_n(e(k/n)) : 0 \leq k < n \right\}$ are i.i.d.\ complex Gaussians of variance $\frac{1}{\log n}.$
  Let $I$ be an interval as in the statement of the lemma.  Let $I'$ be the middle third of that interval.  Then for any $n,$ there are at least $n\epsilon/4$ many $k$ so that $k/n$ are in $I'.$  For any such $k,$ and any $t$
  \[
    \Pr[ |f_n(e(k/n))| > t ] = e^{-(\log n)t^2}.
  \]
  Hence if we define $n_0$ so that $n_0^{3/4}\epsilon = 4\log(3)$ then for all $n \geq n_0,$
  \[
    \Pr\left[ \forall ~ k : k/n \in I', |f_n(e(k/n))| \leq\tfrac{1}{2}  \right] \leq e^{ - \tfrac14 n^{3/4}\epsilon} \leq \tfrac{1}{3}.
  \]

  Using Bernstein's inequality, and Lemma \ref{lem:blockinfinity} there is an absolute constant so that
  \[
    \|f_n'\|_\infty \leq 2n \|f_n\|_\infty \leq Cn,
  \]
  except with probability $\frac{1}{n}.$
  Hence, if we let $J$ be the interval of length $c/n$ around a point in $I'$ where $|f_n(e(k/n))| > \tfrac{1}{2}$,
  then $\min_{x \in J} |f_n(x)| \geq \frac14$ except with probability $\frac{2}{3}.$
\end{proof}

\begin{theorem}
  There exists a GAF that is almost surely in VMOA and which is almost surely not Sledd.
  \label{thm:VMOAnotSledd}
\end{theorem}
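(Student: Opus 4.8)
The plan is to take for $g$ exactly the GAF of Lemma~\ref{lem:gbmoa}, with $n_1 = 1$, $n_{k+1} = 3^{n_k}$, and with $a_k = k^{-1/2}$ (any bounded null sequence $\{a_k\}$ with $\sum_k a_k^2 = \infty$ and $\sum_k a_k^2/n_k < \infty$ will do). Since $\{a_k\}$ is bounded, tends to $0$, and $\sum_k a_k^2/n_k = \sum_k (k n_k)^{-1} < \infty$, Lemma~\ref{lem:gbmoa} already gives that $g$ is an $\text{H}^2$--GAF lying in $\VMOA$ almost surely, so everything reduces to showing $g \notin \Sledd$ almost surely. Because $f_k$ has Fourier support exactly in the block $[2^{n_k}, 2^{n_k+1})$ and the $\{n_k\}$ grow super-exponentially, for $k$ large the Fourier supports of $T_{n_k-1}, T_{n_k}, T_{n_k+1}$ are pairwise disjoint and $(T_{n_k-1} + T_{n_k} + T_{n_k+1})\star g = a_k f_k$; hence, deterministically, $\TSledd[g]^2 \gtrsim \sup_{|x|=1}\sum_{k} a_k^2 |f_k(x)|^2$. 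Since $\{\sup_{|x|=1}\sum_k a_k^2|f_k(x)|^2 = \infty\}$ is a tail event (dropping finitely many $f_k$ changes the supremum by a finite amount), by the Kolmogorov $0$--$1$ law it suffices to prove this supremum is infinite with positive probability.

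The core is a branching construction producing a random point $x^*$ with $|f_k(x^*)| \ge \tfrac14$ for all $k$ beyond some index. Fix a large integer $K$. By Lemma~\ref{lem:blockinfinity} and Borel--Cantelli (using $\sum_k 2^{-n_k} < \infty$), with probability tending to $1$ as $K \to \infty$ one has $\|f_k'\|_\infty \le C2^{n_k}$ for all $k \ge K$, where $C$ is the absolute constant from the proof of Lemma~\ref{lem:nesting}. Independently of that event, build a random tree whose level-$k$ vertices (for $k \ge K$) are arcs of length $\ell_k := c\,2^{-n_k}$: the root (level $K$) is $[0,1]$; given a level-$k$ vertex $I$, subdivide it into $M_{k+1} := k+1$ equal subarcs, declare a subarc $I'$ \emph{active} if some grid point $e(j\,2^{-n_{k+1}})$ in its middle third satisfies $|f_{k+1}(e(j\,2^{-n_{k+1}}))| > \tfrac12$, and attach to each active $I'$ a child equal to the length-$\ell_{k+1}$ subarc of $I'$ centered at such a grid point. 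Inspecting the proof of Lemma~\ref{lem:nesting}: each activity event has probability $> \tfrac23$ (the tower growth of $\{n_k\}$ guarantees $2^{n_{k+1}} \ge n_0(\ell_k/M_{k+1})$, so there are enough grid points in a middle third), the activity events of disjoint subarcs at a given level are genuinely \emph{independent}, being functions of disjoint coordinates of the i.i.d.\ Gaussian vector $\{f_{k+1}(e(j\,2^{-n_{k+1}}))\}_j$, and on the event $\|f_{k+1}'\|_\infty \le C2^{n_{k+1}}$ a child arc $J$ of $I'$ satisfies $\min_J|f_{k+1}| > \tfrac14$ (for $c$ a small absolute constant, using the mean value inequality).

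Consequently the number $Z_k$ of level-$k$ vertices stochastically dominates a branching process with $Z_K = 1$ and $Z_{k+1} \succeq \Binomial(M_{k+1}Z_k, \tfrac23)$, so $\Pr(\text{extinction}) \le \sum_{k > K} 3^{-M_k} = \sum_{k>K} 3^{-k}$; hence for $K$ large the tree is infinite with probability close to $1$. On the intersection of this event with $\{\|f_k'\|_\infty \le C2^{n_k}\ \forall k \ge K\}$ --- an event of positive probability --- choose an infinite branch $J_K \supset J_{K+1} \supset \cdots$ and any $x^* \in \bigcap_{k}\overline{J_k}$; then $|f_k(x^*)| \ge \tfrac14$ for every $k > K$, so $\sup_{|x|=1}\sum_k a_k^2|f_k(x)|^2 \ge \tfrac1{16}\sum_{k > K} a_k^2 = \infty$. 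By the $0$--$1$ law this supremum is almost surely infinite, hence $\TSledd[g] = \infty$ almost surely and $g \notin \Sledd$ almost surely, completing the proof.

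The main obstacle is that the naive approach --- nesting a single sequence of arcs, each supplied by Lemma~\ref{lem:nesting} --- fails, because each application succeeds only with probability $\tfrac13$, so along a single path failures occur infinitely often and no limiting point survives. The fix, which is the only genuinely new point, is to extract from the proof of Lemma~\ref{lem:nesting} the fact that once the single global low-probability event $\{\|f_k'\|_\infty > C2^{n_k}\}$ is discarded, the ``does this subarc contain a large grid value'' events are \emph{independent} across disjoint subarcs --- turning the dead single path into a supercritical (and, since $M_k \to \infty$, robustly surviving) branching process. The remaining points --- the deterministic reduction to $\sup_{|x|=1}\sum_k a_k^2|f_k(x)|^2$, the tail $0$--$1$ law, and checking that the tower growth of $\{n_k\}$ keeps the subdivided arcs within the scope of Lemma~\ref{lem:nesting} --- are routine.
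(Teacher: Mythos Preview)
Your proof is correct, but it takes a more elaborate route than the paper's. You build a branching process in order to locate a point $x^*$ with $|f_k(x^*)| > \tfrac14$ for \emph{every} $k > K$. The paper instead runs exactly the single nested sequence you dismiss as the ``naive approach'': at step $\ell$ it applies Lemma~\ref{lem:nesting} to $J_\ell$, succeeding with probability $\geq \tfrac13$; on failure it simply chooses a default subinterval of the same length and continues. The limit point $x$ of the nested $\{J_\ell\}$ always exists, and $|f_{k_\ell}(x)| > \tfrac14$ holds on every successful step. Since the success indicators are independent Bernoulli variables with parameter $\geq \tfrac13$ (each depends on a fresh $f_{k_\ell}$, and by rotation invariance the conditional probability given $J_\ell$ depends only on $|J_\ell|$, which is fixed regardless of prior successes), the sum $\sum_\ell \tfrac{1}{16\ell}\,\one[J_\ell\text{ succeeds}]$ diverges almost surely, which already forces $\RSledd[g]=\infty$. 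So your assertion that the single-path approach ``fails, because along a single path failures occur infinitely often and no limiting point survives'' is mistaken: one never needed every step to succeed, only a positive fraction of them, which suffices for the harmonic weights to diverge. Your supercritical branching construction is a valid alternative that trades this observation for tree survival; it yields the stronger pointwise conclusion at the cost of a more intricate argument (and of having to reopen the proof of Lemma~\ref{lem:nesting} to separate the global derivative bound from the per-arc grid-point events). A minor further difference: the paper passes to a sparse subsequence $\{n_{k_\ell}\}$ to meet the threshold $n_0(\cdot)$ of Lemma~\ref{lem:nesting}, whereas you use all $n_k$ directly, relying on the tower growth $n_{k+1}=3^{n_k}$ to absorb the extra factor of $k+1$ from your subdivision.
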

\begin{proof}

  We let $g$ be as in Lemma \ref{lem:gbmoa} with $a_k \to 0$ to be defined, so that $g$ is almost surely in VMOA.

  We define a nested sequence of random intervals $\left\{ J_\ell \right\}.$ Let $J_0=[0,1].$
  Define a subsequence $n_{k_\ell}$ inductively by letting $n_{k_\ell}$ be the smallest integer bigger than $n_0( \tfrac{c}{n_{k_{\ell-1}}})$ for $\ell > 1$ and $n_0(c)$ for $\ell = 1.$  
  Let $a_{n_{k_\ell}} = \frac{1}{\sqrt{\ell}},$ and let $a_j=0$ if $j$ is not in $\left\{ n_{k_\ell} \right\}.$  

  We say that an interval $J_\ell$ \emph{succeeds} if there is a subinterval $J'$ of length $\tfrac{c}{n_{k_{\ell-1}}}$ such that $\min_{x \in J'} |f_{k_\ell}| > \tfrac{1}{4}.$  If the interval $J_\ell$ succeeds, we let $J_{\ell+1} = J',$ and otherwise we let $J_{\ell+1}$ be the interval of length $\tfrac{c}{n_{k_{\ell-1}}}$ that shares a left endpoint with $J_\ell.$  
  The nested intervals $\overline{J_\ell}$ decrease to a point $x,$ and   
  \[
    \RSledd[f]^2 
    \geq \sum_{\ell=1}^\infty \frac{1}{\ell}|f_{k_\ell}(x)|^2
    \geq \sum_{\ell=1}^\infty \frac{1}{16\ell}\one[ J_\ell \text{ succeeds}].
  \]
  From Lemma \ref{lem:nesting}, the family $\left\{ \one[ J_\ell \text{ succeeds}] \right\}$ are independent Bernoulli with parameter at least $\frac{1}{3}.$ Then by \cite[Chapter 3, Theorem 6]{Kahane}, this series is almost surely infinite.
\end{proof}

\printbibliography[heading=bibliography]%

\end{document}